\numberwithin{equation}{section}
\newcommand{\R}{\mathbb{R}}
\newcommand{\C}{\mathbb{C}}
\newcommand{\T}{\mathbb{T}}
\newcommand{\Z}{\mathbb{Z}}
\newcommand{\N}{\mathbb{N}}
\newcommand{\ie}{scattered}
\newcommand{\Ie}{Scattered}
\newcommand{\Leb}{\mu}
\newtheorem{theorem}{Theorem}[section]
\newtheorem{definition}[theorem]{Definition}
\newtheorem{example}[theorem]{Example}
\newtheorem{proposition}[theorem]{Proposition}
\newtheorem{question}[theorem]{Question}
\newtheorem{conjecture}[theorem]{Conjecture}
\newtheorem{remark}[theorem]{Remark}
\newtheorem{lemma}[theorem]{Lemma}
\newtheorem{corollary}[theorem]{Corollary}
\theoremstyle{plain}
\newtheorem*{namedthm}{\namedthmname}
\newcounter{namedthm}
	\newenvironment{named}[2]
	{\def\namedthmname{#1}
	\refstepcounter{namedthm}
	\namedthm[#2]\def\@currentlabel{#1}}
	{\endnamedthm}
\newtheoremstyle{subproofstyle}{}{}{}{}{\bfseries}{:}{.5em}{}
\theoremstyle{subproofstyle}
\definecolor{ggreen}{RGB}{0,200,0}
\definecolor{grey}{RGB}{200,150,150}
\title{Metric uniform distribution on analytic curve
}
\author{Vitaly Bergelson and Joel Moreira
\\
With an appendix by Jim Wright}
\renewcommand{\d}{~\mathrm{d}}
\begin{document}

\begin{abstract}
    We obtain multidimensional metric uniform distribution results involving sequences in $\R^k$ parametrized by analytic curves.
    Our theorems extend the classical theorems of Weyl and Koksma in a variety of ways.

    One of our main results implies that for any injective sequences $a_1,\dots,a_k:\N\to\Z$ the set
    $$\Big\{(x_1,\dots,x_k)\in\R^k:\big(a_1(n)x_1,\dots,a_k(n)x_k\big)_{n\in\N}\text{ is uniformly distributed in }\T^k\Big\}$$
    has full Lebesgue measure inside any non-degenerate analytic curve $\gamma\subset\R^k$.
\end{abstract}

\maketitle

\tableofcontents

\section{Introduction}

\subsection{Background and description of the main results}

It was Borel who showed in \cite{Borel09} that for every natural number $b\geq2$, almost every real number $x$ is \emph{normal} in base $b$, meaning that each finite sequence of digits $w\in\{0,\dots,b-1\}^n$ appears in the $b$-adic expansion of $x$ with frequency $1/b^n$.
It was eventually revealed that Borel's theorem is a special case of a basic result in the theory of uniform distribution modulo one established by Weyl in \cite{Weyl16}.
The goal of our paper is to provide an amplification of the aforementioned result of Weyl in higher dimensions.
We start with introducing the relevant definitions.
Throughout the paper we denote by $|X|$ the cardinality of the (finite) set $X$ and follow the convention that $\N=\{1,2,\dots\}$.

\begin{definition}[Uniform distribution]\label{def_uniformdistribution}
    Fix $k\in\N$.
    A sequence $(x_n)_{n\in\N}$ in $\R^k$ is \emph{uniformly distributed modulo 1} if for every intervals $I_1,\dots,I_k\subset[0,1]$,
    \begin{equation}\label{eq_def_uniformdistribution}
        \lim_{N\to\infty}\frac1N\Big|\big\{n\in\{1,\dots,N\}:\{x_n\}\in I_1\times\cdots\times I_k\big\}\Big|=\Leb (I_1\times \cdots\times I_k).
    \end{equation}
    where $\Leb$ denotes the $k$-dimensional Lebesgue measure and for $y=(y_1,\dots,y_k)\in\R^k$ we write $\{y\}$ for the vector of fractional parts $(\{y_1\},\dots,\{y_k\})$.
\end{definition}

Since the uniform distribution modulo 1 of a sequence $(x_n)_{n\in\N}$ in $\R^k$ only depends on the values of $\pi(x_n)$, where $\pi:\R^k\to\T^k=\R^k/\Z^k$ is the natural projection we will often say that $(x_n)_{n\in\N}$ is \emph{u.d. in $\T^k$} instead of \emph{uniformly distributed modulo 1}.

Along with the definition, in \cite{Weyl16} Weyl also introduced (and effectively used) the following criterion for uniform distribution, which still remains a fundamental tool for establishing a wide range of results in the theory of uniform distribution.
\begin{theorem}[Weyl's criterion]\label{thm_weylcriterion}
    Let $k\in\N$ and let $(x_n)_{n\in\N}$ be a sequence in $\R^k$.
    Then $(x_n)$ is uniformly distributed mod 1 if and only if for every non-zero $v\in\Z^k$,
    $$\lim_{N\to\infty}\frac1N\sum_{n=1}^Ne(v\cdot x_n)=0,$$
    where $v\cdot x$ denotes the usual scalar product in $\R^k$.
\end{theorem}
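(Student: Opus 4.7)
The plan is to run the standard three-step bootstrap: rectangles $\Longleftrightarrow$ continuous functions $\Longleftrightarrow$ trigonometric polynomials $\Longleftrightarrow$ pure characters.

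First I would reformulate uniform distribution in functional form: I claim that $(x_n)$ is u.d. in $\T^k$ if and only if
\[
\lim_{N\to\infty}\frac1N\sum_{n=1}^N f(x_n)=\int_{\T^k}f\d\Leb
\]
for every continuous $f:\T^k\to\C$. The direction ``functional $\Rightarrow$ rectangles'' follows from a sandwich argument: given a box $B=I_1\times\cdots\times I_k$ and $\varepsilon>0$, choose continuous $g^-,g^+:\T^k\to[0,1]$ with $g^-\le\mathbf{1}_B\le g^+$ and $\int(g^+-g^-)\d\Leb<\varepsilon$ (an elementary construction using piecewise linear bumps in each coordinate). Applying the functional convergence to $g^-$ and $g^+$ pinches the empirical frequency of $B$ between $\Leb(B)\pm\varepsilon$. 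The reverse direction is similar, first handling indicators of boxes, then using linearity and a uniform approximation by simple functions supported on a fine grid of boxes.

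Next I would reduce continuous functions to trigonometric polynomials. By the Stone--Weierstrass theorem (applied to the algebra of trigonometric polynomials on $\T^k$, which separates points, contains constants, and is closed under complex conjugation since $\overline{e(v\cdot x)}=e(-v\cdot x)$), every continuous $f:\T^k\to\C$ is a uniform limit of trigonometric polynomials $P_m(x)=\sum_{v}c_{v,m}e(v\cdot x)$. A standard $3\varepsilon$-argument then shows that if the Ces\`aro averages converge to the correct integral for each $P_m$, they do so for $f$ as well: uniformly approximate $f$ by some $P_m$, bound $|\frac1N\sum f(x_n)-\frac1N\sum P_m(x_n)|$ by $\|f-P_m\|_\infty$, and similarly for the integrals.

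Finally, by linearity, the averages $\frac1N\sum P(x_n)$ converge to $\int P\d\Leb$ for every trigonometric polynomial $P$ if and only if this holds for each character $e(v\cdot x)$, which is exactly the condition in the statement once one observes that $\int_{\T^k}e(v\cdot x)\d\Leb=0$ for $v\ne0$ and equals $1$ for $v=0$ (the $v=0$ case being the trivial identity $\frac1N\sum 1=1$). Chaining these three equivalences yields the theorem.

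The only genuinely delicate point is the sandwich step in the first reduction, since boundaries of rectangles in $\T^k$ are not Jordan-negligible in a completely free way; however, taking the bumps to be products of one-dimensional tent functions reduces the estimate to the trivial one-dimensional case and the rest is bookkeeping. I would expect this step to take the most writing, while the Stone--Weierstrass and linearity steps are essentially immediate.
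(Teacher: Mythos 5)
The paper does not prove Weyl's criterion; it is stated as a classical result of Weyl from 1916 and used as a black box throughout. Your proposed proof is the standard three-step bootstrap (functional reformulation via sandwiching, Stone--Weierstrass to pass to trigonometric polynomials, then linearity to reduce to single characters) and is correct; the one place to be careful, as you note, is the sandwich step, and the fix of using products of one-dimensional tent functions does handle it, since in each coordinate the tent's support can be made to exceed the interval's by less than $\varepsilon/k$, so the $k$-dimensional error is controlled.
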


One can show that Borel's theorem is equivalent to the following statement pertaining to uniform distribution modulo one.
\begin{theorem}\label{thm_Borelud}
    For each $b\geq2$, for almost every $x\in\R$ (with respect to Lebesgue measure) the sequence $(b^nx)_{n\in\N}$ is uniformly distributed modulo one.
\end{theorem}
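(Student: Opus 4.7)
The plan is to reduce the statement to Weyl's criterion (Theorem \ref{thm_weylcriterion}) and then apply a standard second-moment / Borel--Cantelli argument for each nonzero frequency. Since the property of being u.d. mod 1 is shift-invariant, we may restrict $x$ to $[0,1]$. By Weyl's criterion, the sequence $(b^n x)_{n \in \N}$ is u.d.\ mod 1 iff for every $v \in \Z \setminus \{0\}$,
\[
\lim_{N \to \infty} \frac{1}{N} \sum_{n=1}^{N} e(v b^n x) = 0.
\]
Since $\Z \setminus \{0\}$ is countable, it suffices to show that for each fixed nonzero $v$, the exceptional set $E_v \subset [0,1]$ where this limit fails to hold is Lebesgue null; the conclusion then follows by taking the countable union $\bigcup_v E_v$.

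Fix $v \neq 0$ and write $S_N(x) := \sum_{n=1}^N e(v b^n x)$. The heart of the argument is the orthogonality relation on $[0,1]$: since $b \geq 2$, the integers $b^n - b^m$ are nonzero for all $n \neq m$, so
\[
\int_0^1 |S_N(x)|^2 \d x \;=\; \sum_{n,m=1}^{N} \int_0^1 e\bigl(v(b^n - b^m)x\bigr) \d x \;=\; N.
\]
Hence $\int_0^1 |S_N(x)/N|^2 \d x = 1/N$. Along the subsequence $N_k = k^2$ this sums: $\sum_k \int_0^1 |S_{k^2}/k^2|^2 \d x < \infty$, so by Borel--Cantelli $S_{k^2}(x)/k^2 \to 0$ for almost every $x$.

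To pass from the subsequence $k^2$ to all $N$, given $N$ choose $k$ with $k^2 \leq N < (k+1)^2$. The trivial bound $|S_N(x) - S_{k^2}(x)| \leq (k+1)^2 - k^2 = 2k+1$ gives
\[
\left| \frac{S_N(x)}{N} \right| \;\leq\; \frac{|S_{k^2}(x)|}{k^2} + \frac{2k+1}{k^2} \;\longrightarrow\; 0
\]
for every $x$ in the full-measure set on which $S_{k^2}/k^2 \to 0$. This shows $E_v$ is null for each fixed $v$, and the countable union over $v \in \Z \setminus \{0\}$ completes the proof.

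There is no real obstacle: the only place that needs any care is the interpolation from $N = k^2$ to general $N$, and this is handled by the crude $O(k)$ bound above. (Alternatively, one could deduce the a.e.\ convergence directly from the Birkhoff ergodic theorem applied to the ergodic map $x \mapsto bx \bmod 1$ on $\T$ with the test function $e(v \,\cdot\,)$, bypassing Borel--Cantelli; the second-moment approach is preferable here because it is elementary and generalizes readily to the multidimensional setting that the paper ultimately targets.)
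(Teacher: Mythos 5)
Your proof is correct and follows essentially the same approach the paper uses to prove \cref{thm_weylintegers} (of which \cref{thm_Borelud} is the special case $a_n=b^n$): Weyl's criterion, orthogonality of characters on $[0,1]$ to compute $\|S_N/N\|_{L^2}^2=1/N$, summability along $N=k^2$ combined with a monotone-convergence/Borel--Cantelli step, and a crude interpolation from $k^2$ to general $N$.
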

The equivalence between Borel's theorem and \cref{thm_Borelud} was first observed by Wall in his thesis \cite[Theorem 1]{Wall50} (the proof can be found also in \cite[Chapter 1, Theorem 8.1]{Kuipers_Niederreiter74}).
\cref{thm_Borelud} is a rather special case of the following more general result obtained by Weyl in \cite{Weyl16}.

\begin{theorem}\label{thm_Weyl_intro}
    For any sequence $a:\N\to\R$ satisfying
\begin{equation}\label{eq_thm_Weyl_intro}
    \inf_n\big(a(n+1)-a(n)\big)>0
\end{equation}
the sequence $\big(a(n)x\big)_{n\in\N}$ is uniformly distributed modulo one for almost every $x\in\R$.
\end{theorem}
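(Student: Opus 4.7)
My plan is to use Weyl's criterion (\cref{thm_weylcriterion}) together with an $L^2$ argument on each bounded interval. By Weyl's criterion, it suffices to show that for every fixed nonzero $h\in\Z$, the exponential sum
$$S_N(x):=\frac1N\sum_{n=1}^N e\big(h\,a(n)\,x\big)$$
tends to $0$ as $N\to\infty$ for almost every $x\in\R$; since there are only countably many $h$, a countable intersection then yields a full-measure set for which all Weyl sums vanish simultaneously.

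Fix a bounded interval $[A,B]\subset\R$. Expanding $|S_N|^2$ and using
$$\left|\int_A^B e\big(h(a(n)-a(m))x\big)\d x\right|\le \frac{1}{\pi|h|\,|a(n)-a(m)|}\qquad(n\neq m),$$
together with the gap hypothesis $a(n)-a(m)\ge\delta(n-m)$ for $n>m$ (where $\delta:=\inf_n(a(n+1)-a(n))>0$), I expect to obtain a bound of the form
$$\int_A^B|S_N(x)|^2\d x\ \lesssim_{h,A,B,\delta}\ \frac{\log N}{N}.$$

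Because $(\log N)/N$ is not summable, I cannot directly invoke Borel--Cantelli for the full sequence, so I pass to a subsequence: set $N_k:=k^2$. Then $\sum_k\int_A^B|S_{N_k}(x)|^2\d x<\infty$, and Fubini plus Borel--Cantelli yields $S_{N_k}(x)\to 0$ for a.e.\ $x\in[A,B]$. For $N_k\le N<N_{k+1}$ the elementary estimate
$$\left|S_N(x)-\frac{N_k}{N}S_{N_k}(x)\right|\le \frac{N-N_k}{N}\le \frac{N_{k+1}-N_k}{N_k}=\frac{2k+1}{k^2}\xrightarrow[k\to\infty]{}0$$
bridges the gaps and forces $S_N(x)\to 0$ a.e.\ on $[A,B]$. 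Taking a countable exhaustion of $\R$ by intervals and a countable intersection over $h\in\Z\setminus\{0\}$ finishes the proof.

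The only nontrivial step is the $L^2$ bound, and there the main point is to convert the Fourier-analytic decay $1/|a(n)-a(m)|$ into the arithmetic decay $1/(\delta|n-m|)$ using the gap condition, after which the double sum collapses to $\sum_{1\le j\le N}(N-j)/j\lesssim N\log N$. Everything else (the Borel--Cantelli thinning and the filling-in between the $N_k$) is by now standard. Note that working on a general interval $[A,B]$ rather than $[0,1]$ is essential here, since the set of good $x$ is not $\Z$-invariant: the map $x\mapsto x+1$ shifts $a(n)x$ by $a(n)$, which is not an integer in general.
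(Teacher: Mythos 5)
Your proposal is correct and follows essentially the same route as the paper (its proof of \cref{thm_weylrealsbaby}, which contains \cref{thm_Weyl_intro}): Weyl's criterion, the $L^2$ estimate $\|F_N\|_{L^2}^2\ll\log N/N$ on a compact interval via the oscillatory integral bound $\ll 1/|a(n)-a(m)|$ and the gap condition, thinning to the subsequence $N_k=k^2$ so the $L^2$ norms become summable, and then bridging with the elementary estimate (the paper's \cref{lemma_sublacunary}). The only cosmetic differences are that you compute the (linear-phase) oscillatory integral directly rather than quoting van der Corput's \cref{lemma_vdCestimate}, and you use the stronger telescoped bound $a(n)-a(m)\ge\delta(n-m)$ where the paper only needs $g$-separation.
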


In fact, in \cite{Weyl16} Weyl proved a stronger variant of \cref{thm_Weyl_intro} (see \cref{thm_weylreals} below) that requires only a weaker form of \eqref{eq_thm_Weyl_intro}.
In this paper we consider a different weaker form of \eqref{eq_thm_Weyl_intro} and call a sequence satisfying it \emph{\ie}.
The precise definition of \ie{} sequence is postponed until \cref{def_ie}, but the reader may safely think of it as an averaged form of \eqref{eq_thm_Weyl_intro} that allows for a wider class of examples; a full discussion on \ie{} sequences, and their connection to both condition \eqref{eq_thm_Weyl_intro} and Weyl's original condition is presented in \cref{sec_iesequences}.

In this paper we are interested in ``non-trivial'' multidimensional extensions of Weyl's theorem.
We first note that it follows easily from \cref{thm_Weyl_intro} and Weyl's criterion that given a sequence $a:\N\to\R$ satisfying \eqref{eq_thm_Weyl_intro}, for almost every point $(x,y)\in\R^2$ the sequence $\big(a(n)x,a(n)y\big)_{n\in\N}$ is uniformly distributed in $\T^2$  (cf. the stronger \cref{prop_2dimKoksma} below).
This fact is related to a more general theorem obtained by Philipp in \cite{Philipp64}, stating that for any sequence $a(n)$ of $d\times d$ non-singular matrices with integers entries and $\det(a(n)-a(m))\neq0$ for all $n\neq m$, for almost every vector $x=(x_1,\dots,x_d)\in\T^d$ the sequence $(a(n)x)_{n\in\N}$ is uniformly distributed in $\T^d$.

The goal of this paper is to address the more subtle question of whether given a (sufficiently regular) curve $\gamma$ in $\R^n$ one can have a Weyl-type theorem that applies to almost every point in $\gamma$.
For instance, if $a:\N\to\R$ satisfies \eqref{eq_thm_Weyl_intro}, is $\big(a(n)x,a(n)x^2\big)_{n\in\N}$ uniformly distributed in $\T^2$ for almost every $x\in\R$?
More generally, assume that $p_1,p_2\in\R[x]$ and $a,b:\N\to\R$; under which conditions (on $a,b,p_1,p_2$) is the sequence $\big(a(n)p_1(x),b(n)p_2(x)\big)_{n\in\N}$ uniformly distributed in $\T^2$ for almost every $x\in\R$?
Observe that if $p_1=p_2$ and $a=b$ then the sequence takes values in the diagonal $\{(x,x):x\in\T\}$ and hence cannot be uniformly distributed.
The following result gives an answer to some of these questions.

\begin{theorem}
\label{thm_muldidimpolWeyl1}
    Let $k\in\N$, let $p_1,\cdots, p_k\in\R[x]$. Then the following are equivalent:
    \begin{enumerate}
        \item The set $\{1,p_1,\dots,p_k\}$ is linearly independent over $\R$.
        \item For every \ie{} sequences $a_1,\dots,a_k:\N\to\R$, for Lebesgue-a.e.~$x\in\R$, the sequence $\big(a_1(n)p_1(x),\dots,a_k(n)p_k(x)\big)_{n\in\N}$ is uniformly distributed in $\T^k$.
    \end{enumerate}
\end{theorem}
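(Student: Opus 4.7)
My plan is to prove the two directions of the equivalence separately. For $(2)\Rightarrow(1)$ I argue by contrapositive: assume $c_0+\sum_{i=1}^k c_i p_i\equiv 0$ with $(c_0,\ldots,c_k)\neq 0$. Some $c_i$ with $i\geq 1$ is nonzero, and after dividing through by $-c_0$ when $c_0\neq 0$ we may assume $c_0\in\{0,-1\}$, so $\sum_{i\geq 1}c_i p_i\equiv -c_0$. Define $a_i(n):=|c_i|n$ whenever $c_i\neq 0$ and $a_i(n):=n$ otherwise; each is a strictly increasing arithmetic progression satisfying Weyl's gap condition \eqref{eq_thm_Weyl_intro} and is therefore \ie. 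With $v\in\Z^k\setminus\{0\}$ defined by $v_i=\operatorname{sign}(c_i)$ for $c_i\neq 0$ and $v_i=0$ otherwise,
\[
\sum_{i=1}^k v_i a_i(n) p_i(x)\;=\;n\sum_{i\geq 1}c_i p_i(x)\;=\;-c_0\,n\;\in\;\Z
\]
for every $n$ and $x$. Hence the $v$-th Weyl exponential sum equals $1$ identically, \cref{thm_weylcriterion} fails for every $x$, and (2) is violated.

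For $(1)\Rightarrow(2)$, fix \ie{} sequences $a_1,\ldots,a_k$. By \cref{thm_weylcriterion} and the countability of $\Z^k\setminus\{0\}$, it suffices to prove that for each fixed $v\in\Z^k\setminus\{0\}$ the exponential sum
\[
S_N^v(x)\;:=\;\frac1N\sum_{n=1}^N e\Bigl(\sum_{i=1}^k v_i a_i(n) p_i(x)\Bigr)
\]
tends to $0$ for Lebesgue-a.e.\ $x\in\R$. I would attack this via the classical Koksma $L^2$ method: for each compact interval $I\subset\R$, Fubini gives
\[
\int_I|S_N^v(x)|^2\d x\;=\;\frac1{N^2}\sum_{n,m=1}^N\int_I e(q_{n,m}(x))\d x,
\]
where $q_{n,m}(x):=\sum_{i=1}^k v_i(a_i(n)-a_i(m))p_i(x)$ is a polynomial in $x$ of degree at most $D:=\max_i\deg p_i$.

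Hypothesis~(1) enters decisively here. Linear independence of $\{1,p_1,\ldots,p_k\}$ forces each $p_i$ to be non-constant and ensures that $q_{n,m}$ is a constant function of $x$ if and only if $v_i(a_i(n)-a_i(m))=0$ for every $i$; moreover some monomial coefficient of $q_{n,m}$ is then at least a constant multiple of $\max_i|v_i(a_i(n)-a_i(m))|$, with a constant depending only on $\{p_i\}$. For each such ``good'' pair a van der Corput oscillatory-integral bound (in the spirit of Wright's appendix) yields
\[
\Bigl|\int_I e(q_{n,m}(x))\d x\Bigr|\;\leq\;C\,\max_i|v_i(a_i(n)-a_i(m))|^{-1/D}.
\]
The remaining ``bad'' pairs, with $v_i(a_i(n)-a_i(m))=0$ for all $i$, consist of the diagonal (total mass $1/N$) plus off-diagonal coincidences $a_i(n)=a_i(m)$ for $i\in\operatorname{supp}(v)$, which should be rendered negligible by the injectivity built into \cref{def_ie}.

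The main obstacle is then to sum the good-pair bounds: the precise form of \cref{def_ie}, presumably after a dyadic decomposition of the differences $|a_i(n)-a_i(m)|$, should yield $N^{-2}\sum_{n,m}\max_i|v_i(a_i(n)-a_i(m))|^{-1/D}\to 0$ at a rate summable along a subsequence $N_j=2^j$. Granting this, Borel--Cantelli gives $S_{N_j}^v(x)\to 0$ for Lebesgue-a.e.\ $x\in I$, and a standard monotonicity argument (comparing $S_N$ with $S_{N_j}$ for $N_j\leq N<N_{j+1}$) upgrades the convergence to all $N$. Exhausting $\R$ by compact intervals $I$ completes the proof.
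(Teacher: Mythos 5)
Your overall strategy coincides with the paper's: for $(2)\Rightarrow(1)$ build a linear counterexample, and for $(1)\Rightarrow(2)$ use Weyl's criterion, an $L^2$ estimate on compact intervals driven by an oscillatory-integral bound, and a subsequence/Borel--Cantelli argument. The paper actually deduces \cref{thm_muldidimpolWeyl1} from the more general \cref{thm_mainmultidimweyl1} (analytic $f_i$), whose proof relies on Wright's \cref{thm_wright}; your specialization to polynomials could in principle bypass the full analytic machinery, but the bound you invoke, namely $\bigl|\int_I e(q_{n,m})\bigr|\ll (\max_j |c_j|)^{-1/D}$ for a polynomial $q_{n,m}=\sum_{j\ge 0} c_jx^j$ of degree $\le D$, is not a direct consequence of \cref{lemma_highordervdC} when the dominant coefficient is not the top one, and it does require its own argument (it is precisely the polynomial instance of Wright's theorem). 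Citing it ``in the spirit of Wright's appendix'' is fine for a sketch but should be flagged as a nontrivial ingredient.

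The one concrete gap is your choice of subsequence $N_j=2^j$. The estimate you will get from the \ie{} hypothesis is $\|S_N^v\|_{L^2(I)}^2\ll(\log N)^{-1-\epsilon}$, which is indeed summable along $N_j=2^j$; however, the ``standard monotonicity argument'' you invoke only gives $|S_N^v-S_{N_j}^v|\le 2(1-N_j/N)$ for $N_j\le N<N_{j+1}$, and with $N_{j+1}/N_j=2$ this is bounded by a constant, not $o(1)$. The upgrade from the subsequence to all $N$ requires $N_{j+1}/N_j\to1$ (this is exactly \cref{lemma_sublacunary} in the paper), so one has to take a subexponential sequence such as $N_r=\lceil e^{r^{\alpha}}\rceil$ with $\alpha\in(1/(1+\epsilon),1)$, which keeps $\sum (\log N_r)^{-1-\epsilon}<\infty$ while also making the ratio tend to $1$. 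As written, your argument establishes convergence only along the lacunary subsequence and does not recover all $N$.

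Two smaller remarks. First, the dyadic decomposition of the differences $|a_i(n)-a_i(m)|$ and the good/bad-pair split are unnecessary: once you fix a single index $j$ with $v_j\ne 0$, you can bound $\max_i|v_i(a_i(n)-a_i(m))|^{-1/D}\le |v_j|^{-1/D}|a_j(n)-a_j(m)|^{-1/D}$, and then the quantity $\min\bigl(|I|,\,|v_j|^{-1/D}|a_j(n)-a_j(m)|^{-1/D}\bigr)$ is controlled directly by \eqref{eq_iedef} with $\delta=1/D$, covering both the coincident pairs and the rest in one stroke; this is what the paper does. Second, the phrase ``injectivity built into \cref{def_ie}'' is misleading: \ie{} sequences need not be injective (the paper gives $a(n)=n-\lfloor\sqrt n\rfloor^2$ as an example); it is the averaged inequality, where repeated values contribute weight $1$, that renders the near-diagonal negligible.

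Finally, your $(2)\Rightarrow(1)$ direction is correct and is a minor variant of the paper's construction; the paper instead normalizes the constant $\alpha=c_1f_1+\cdots+c_kf_k$ and divides the $a_i$ by $\alpha$, but both produce the same phenomenon of an identically vanishing Weyl sum.
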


\cref{thm_muldidimpolWeyl1} follows from the stronger \cref{thm_mainmultidimweyl1}.
It implies, for example, that the sequence $\big(a(n)x,a(n)x^2,\dots,a(n)x^k\big)_{n\in\N}$ is u.d. mod 1 for almost every $x\in\R$ (whenever $a$ is \ie).
However, this result does not guarantee that the same conclusion holds for the sequence $\big(a(n)x,a(n)^2x, \dots,a(n)^kx\big)_{n\in\N}$.
The following theorem shows that, rather than assuming that the polynomials are linearly independent, one can instead impose an independence condition on the sequences $a_i(n)$.
Given sequences $a_1,\dots,a_k:\N\to\R$, we say that they are \emph{jointly \ie} if for every non-zero $v=(v_1,\dots,v_k)\in\R^k$ the linear combination $v_1a_1+\cdots+v_ka_k$ is \ie.

\begin{theorem}
\label{thm_muldidimpolWeyl2}
    Let $k\in\N$, let $p_1,\cdots, p_k\in\R[x]$ be non-constant polynomials and let $a_1,\dots,a_k:\N\to\R$ be jointly \ie{}.
    Then for Lebesgue-a.e.~$x\in\R$, the sequence $\big(a_1(n)p_1(x),\dots,a_k(n)p_k(x)\big)_{n\in\N}$ is uniformly distributed in $\T^k$.
\end{theorem}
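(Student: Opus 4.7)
The plan is to follow the classical Koksma--Weyl template, adapted to accommodate the product structure $a_j(n)p_j(x)$. By Weyl's criterion (\cref{thm_weylcriterion}), I would fix an arbitrary nonzero vector $v=(v_1,\dots,v_k)\in\Z^k$ and aim to show that for Lebesgue-a.e.\ $x\in\R$ the averaged exponential sums
\[
S_N^v(x):=\frac1N\sum_{n=1}^N e\bigl(F_n^v(x)\bigr),\qquad F_n^v(x):=\sum_{j=1}^k v_j\,a_j(n)\,p_j(x),
\]
tend to $0$ as $N\to\infty$. Restricting $x$ to an arbitrary compact interval $I\subset\R$ and then letting $I$ exhaust $\R$, it would suffice to control $\|S_N^v\|_{L^2(I)}^2$, after which a standard maximal argument in the spirit of Gal--Koksma will upgrade $L^2$ decay along a suitably thin subsequence to a.e.\ convergence of the full sequence.

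Expanding the square produces
\[
\int_I |S_N^v(x)|^2\d x=\frac1{N^2}\sum_{n,m=1}^N\int_I e\bigl(q_{n,m}(x)\bigr)\d x,\qquad q_{n,m}(x):=\sum_{j=1}^k v_j\bigl(a_j(n)-a_j(m)\bigr)p_j(x),
\]
which is an integral of a polynomial phase in $x$. Set $D_v:=\max\{\deg p_j:v_j\neq 0\}$, which is at least $1$ because every $p_j$ is non-constant and $v\neq 0$, and $S_v:=\{j:v_j\neq 0,\ \deg p_j=D_v\}$. Then the coefficient of $x^{D_v}$ in $q_{n,m}$ equals $A_v(n)-A_v(m)$, where
\[
A_v(n):=\sum_{j\in S_v} v_j\,c_j\,a_j(n)
\]
and $c_j$ is the nonzero leading coefficient of $p_j$. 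Since $v_jc_j\neq 0$ for each $j\in S_v$ and $S_v\neq\emptyset$, the vector of coefficients defining $A_v$ is nonzero, so $A_v$ is \ie{} by the assumption that $(a_1,\dots,a_k)$ is jointly \ie. The classical van der Corput estimate for polynomial oscillatory integrals then yields, on $I$,
\[
\Bigl|\int_I e\bigl(q_{n,m}(x)\bigr)\d x\Bigr|\leq C_{D_v,I}\,|A_v(n)-A_v(m)|^{-1/D_v}
\]
whenever $A_v(n)\neq A_v(m)$, with the trivial bound $|I|$ otherwise.

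Combining the bounds, the required $L^2$ decay reduces to showing that both
\[
\frac{|\{(n,m):n,m\leq N,\ A_v(n)=A_v(m)\}|}{N^2}\quad\text{and}\quad\frac1{N^2}\sum_{\substack{n,m\leq N\\ A_v(n)\neq A_v(m)}}|A_v(n)-A_v(m)|^{-1/D_v}
\]
tend to $0$ as $N\to\infty$; both conclusions should follow from the quantitative properties of \ie{} sequences to be developed in \cref{sec_iesequences}, applied to the \ie{} sequence $A_v$. I expect the main obstacle to be ensuring that the \ie{} condition is strong enough to control the van der Corput-weighted sum above uniformly in $D_v\geq 1$ (the case $D_v=1$ typically being the most delicate, since the van der Corput exponent is worst there); a secondary technical point is the upgrade from $L^2$ decay to a.e.\ convergence of the full sequence, which is classical but requires a careful choice of subsequence and a suitable maximal inequality.
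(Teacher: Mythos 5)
Your proposal matches the paper's proof essentially step for step: extract the coefficient of $x^{D_v}$ from the phase polynomial (the paper calls it $b(n)$, you call it $A_v(n)$), observe that it is a nonzero linear combination of the $a_j$'s and hence \ie{} by the joint hypothesis, apply the van der Corput oscillatory-integral bound of order $D_v$ (\cref{lemma_highordervdC}) to get $\|F_N\|_{L^2(I)}^2\ll(\log N)^{-1-\epsilon}$, and pass from $L^2$ decay along a subsequence $N_r=e^{r^{1-\epsilon'}}$ to a.e.\ convergence via \cref{lemma_sublacunary}, which plays the role of your Gal--Koksma step. One small correction to your parenthetical remark: the van der Corput exponent $-1/D_v$ is \emph{weakest} when $D_v$ is large, not when $D_v=1$, which is precisely why \cref{def_ie} demands $\delta$-\ie{}ness for every $\delta\in(0,1]$ rather than just $\delta=1$.
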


\cref{thm_muldidimpolWeyl2} implies, for example, that for any \ie{} sequence $a:\N\to\R$, the sequence $\big(a(n)x,a(n)^2x, \dots,a(n)^kx\big)_{n\in\N}$ is u.d. mod 1 for almost every $x\in\R$.
Actually, if $a(n)$ is \ie, then $P\circ a$ is \ie{} for a large class of functions $P$ which includes all non-constant polynomials; see \cref{lemma_iecomposition}.

The following theorem, which is proved in \cref{sec_proofofmainthm}, extends the scope of \cref{thm_muldidimpolWeyl1,thm_muldidimpolWeyl2} by replacing polynomials with general analytic functions.

\begin{theorem}\label{thm_mainmultidimweyl3}
        Let $k\in\N$, let $f_1,\dots,f_k:\R\to\R$ be non-constant analytic functions and let $a_1,\dots,a_k:\N\to\R$ be \ie{}.

        Assume that either
    \begin{enumerate}
        \item\label{part1_thm_muldidimanalWeyl} The sequences $a_1,\dots,a_k$ are jointly \ie{},

        \item[or]
        \item\label{part2_thm_muldidimanalWeyl} The functions $\{1,f_1,\dots,f_k\}$ are linearly independent over $\R$.
    \end{enumerate}
    Then for a.e.~$x\in\R$, the sequence $\big(a_1(n)f_1(x),\dots,a_k(n)f_k(x)\big)_{n\in\N}$ is uniformly distributed in $\T^k$.
    \end{theorem}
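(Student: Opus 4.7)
My plan is to apply Weyl's criterion (\cref{thm_weylcriterion}) and bound the resulting exponential sums by the $L^2$ method, reducing the whole theorem to an oscillatory integral estimate for analytic phases. Fix a nonzero $v=(v_1,\dots,v_k)\in\Z^k$; it suffices to prove
$$S_N^v(x):=\frac1N\sum_{n=1}^N e\Big(\sum_{i=1}^k v_i a_i(n)f_i(x)\Big)\xrightarrow[N\to\infty]{}0\qquad\text{for a.e.~}x\in\R,$$
since a countable intersection over $v\ne 0$ then delivers the desired full-measure set. By the countable decomposition $\R=\bigcup_{j\in\Z}[j,j+1]$, it is enough to prove a.e.~convergence on any fixed bounded interval $[A,B]$.

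On $[A,B]$ I would expand the square:
$$\int_A^B|S_N^v(x)|^2\,dx=\frac1{N^2}\sum_{n,m=1}^N I_{n,m},\qquad I_{n,m}:=\int_A^B e\Big(\sum_{i=1}^k c_i^{n,m}f_i(x)\Big)\d x,$$
with $c_i^{n,m}:=v_i(a_i(n)-a_i(m))$. The diagonal $n=m$ contributes $O(1/N)$. For the off-diagonal terms I would seek an estimate of the form $|I_{n,m}|\le\Phi(\max_i|c_i^{n,m}|)$ for some $\Phi$ with $\Phi(t)\to 0$ as $t\to\infty$, depending only on $f_1,\dots,f_k$ and $[A,B]$. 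Under hypothesis (2), the phase $\phi_{n,m}(x)=\sum_i c_i^{n,m}f_i(x)$ is analytic and non-constant whenever the coefficient vector is nonzero, so a van der Corput bound combined with a stratification of $[A,B]$ into pieces on which some derivative of $\phi_{n,m}$ is uniformly large should yield the required $\Phi$; this analytic oscillatory-integral estimate is what I expect to pull out of Wright's appendix. Under hypothesis (1), I would instead regroup the phase around the single \ie{} linear combination $b(n):=\sum_i v_i a_i(n)$, localize on small subintervals where (by analyticity and pigeonholing) one $f_i$ dominates, and reduce to the one-variable Weyl--Koksma framework already used for \cref{thm_Weyl_intro}.

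To convert these bounds into an a.e.~statement, I would use the \ie{} hypothesis (respectively joint \ie{} in case (1)) to control the number of pairs $(n,m)\in\{1,\dots,N\}^2$ for which $\max_i|c_i^{n,m}|\le T$: the averaged spacing condition should force this count to be $o(N^2)$ as $T$ is allowed to grow slowly in $N$. Combined with the oscillatory bound, this yields $\int_A^B|S_N^v|^2\,\d x = O(N^{-\alpha})$ for some $\alpha>0$; Borel--Cantelli then gives a.e.~convergence of $S_{N_k}^v$ along a sufficiently fast subsequence $N_k$, and a routine monotonicity/maximal argument (of the type used in the classical proofs of \cref{thm_Weyl_intro} and \cref{thm_Borelud}) extends convergence to the full sequence. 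The main obstacle, and the likely raison d'être of the dedicated appendix, is the uniform oscillatory-integral bound on $I_{n,m}$ for arbitrary real linear combinations of finitely many analytic functions: unlike in the polynomial case, where one can appeal directly to van der Corput estimates on monomials, one must exploit analyticity globally to rule out degenerate cancellations and obtain a decay in $\max_i|c_i^{n,m}|$ that is uniform in the coefficient direction---exactly the technical input Wright's appendix is designed to supply.
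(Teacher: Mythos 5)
Your overall architecture -- Weyl's criterion, restriction to compact intervals, the $L^2$ second-moment computation, an oscillatory-integral decay, and a sparse-subsequence argument -- matches the paper's, and your treatment of case~(2) is essentially theirs: you correctly identify Wright's \cref{thm_wright} as the decisive input, namely a bound $\bigl|\int_I e(\sum_i c_i f_i)\bigr|\le C\|c\|^{-\delta}$ uniform in the direction of the coefficient vector, which you then feed through the scattered condition on a single $a_j$.

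There is, however, a genuine gap in your treatment of case~(1). You propose to ``regroup the phase around the single \ie{} linear combination $b(n):=\sum_i v_i a_i(n)$'' and then ``localize on small subintervals where one $f_i$ dominates.'' Neither step is sound. The phase $\sum_i v_i a_i(n) f_i(x)$ does not factor as $b(n)\cdot g(x)$ because the $f_i$ are different functions, so there is no single scalar sequence that multiplies a fixed profile. And localizing to a subinterval where $|f_1|$ dominates does not let you discard the other terms: the integral sees the full derivative of the phase, not the magnitudes of the summands, and the ``small'' terms can carry the oscillation. The paper's actual maneuver for case~(1) is an algebraic one in the \emph{function} variable rather than the sequence variable: with $f_0\equiv1$, choose a maximal linearly independent subset $\{f_0,\dots,f_d\}$ and write each $f_i$ as a combination of these, so that the phase becomes $\sum_{j=0}^d b_j(n)f_j(x)$ with $b_j(n)=\sum_i v_i u_{i,j}a_i(n)$. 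Because some $v_i\ne 0$ and the corresponding $f_i$ is non-constant, some $b_j$ with $j\ge1$ has a nonzero coefficient vector, hence is \ie{} by joint scattering. Now one applies \cref{thm_wright} to the \emph{linearly independent} system $\{f_1,\dots,f_d\}$ exactly as in case~(2). So the reduction is still to Wright's multidimensional estimate, not to the one-variable Weyl--Koksma framework; without this change of basis your plan cannot produce the decay in $\|\lambda_{n,m}\|$ needed to exploit the scattered hypothesis.

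One further, smaller, overclaim: you expect $\int_A^B|S_N^v|^2 = O(N^{-\alpha})$. The scattered condition \eqref{eq_iedef} only yields a logarithmic saving, $\|F_N\|^2_{L^2(I)}\ll(\log N)^{-(1+\epsilon)}$. This is still enough, but the subsequence must be chosen superpolynomially, e.g.\ $N_r=\lceil e^{r^{1-\epsilon'}}\rceil$ with $(1-\epsilon')(1+\epsilon)>1$, which still satisfies $N_{r+1}/N_r\to1$ so that \cref{lemma_sublacunary} closes the argument. Your ``routine monotonicity/maximal argument'' is the right idea but should be phrased around this sublacunary lemma rather than an honest power-decay Borel--Cantelli.
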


\begin{remark}
    The condition that the $f_i$ are analytic in \cref{thm_mainmultidimweyl3} cannot be relaxed to $f_i\in C^\infty$. Indeed there exist non-constant $C^\infty$ functions such that $f(x)=2$ for a positive measure set of $x$.
    So, if $a(n)=n$ then $a(n)f(x)$ is an integer for every $n\in\N$ and every $x$ in a positive measure set, and hence cannot be u.d. mod 1 for almost every $x$.
    For more discussion on possible relaxations of the analytic condition see \cref{sec_weylfunctions} below.
\end{remark}

Here is an equivalent formulation of \cref{thm_mainmultidimweyl3} in geometric terms.
An \emph{analytic curve} $\gamma\subset\R^k$ is the image of a real analytic function $f:\R\to\R^k$. It is \emph{non-degenerate} if it is not contained in a proper hyperplane.
\begin{theorem}\label{thm_curvesformulation}
    Let $k\in\N$, $\gamma\subset\R^k$ be an analytic curve and let $a_1,\dots,a_k:\N\to\R$ be \ie.
    Assume that either the sequences $a_1,\dots,a_k$ are jointly \ie{} or that the curve is non-degenerate.
    Then for almost every $(x_1,\dots,x_k)\in\gamma$, the sequence $(a_1(n)x_1,\dots,a_k(n)x_k)$ is uniformly distributed in $\T^k$.
\end{theorem}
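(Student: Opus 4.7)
The plan is to deduce Theorem \ref{thm_curvesformulation} directly from Theorem \ref{thm_mainmultidimweyl3} by choosing an analytic parametrization $f = (f_1, \ldots, f_k) : \R \to \R^k$ of the curve $\gamma$ and transferring the conclusion back to $\gamma$.

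First I would translate the hypotheses. The curve $\gamma$ is contained in a proper hyperplane iff there exist constants $c_0, c_1, \ldots, c_k$, not all zero with some $c_i \neq 0$ for $i \geq 1$, such that $c_0 + c_1 f_1(t) + \cdots + c_k f_k(t) \equiv 0$. Hence non-degeneracy of $\gamma$ is exactly the linear independence of $\{1, f_1, \ldots, f_k\}$ over $\R$ required in part~(2) of Theorem \ref{thm_mainmultidimweyl3}, and this independence forces each $f_i$ to be non-constant (else $\{1, f_i\}$ alone would be dependent). In the jointly \ie{} case, one may similarly reduce to a parametrization in which each $f_i$ is non-constant: any coordinate on which $\gamma$ is constant places $\gamma$ entirely in a hyperplane $\{x_i = c\}$ and can be peeled off and treated separately, using the one-dimensional Weyl theorem applied to the \ie{} sequence $a_i$ together with a conditional equidistribution argument.

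Next I would compare ``a.e.\ $(x_1,\ldots,x_k) \in \gamma$'' with ``a.e.\ $t \in \R$'' under $f$. Because $f$ is analytic and non-constant (since $\gamma$ is not a single point), its critical set $Z = \{t \in \R : f'(t) = 0\}$ is discrete. Outside $Z$, $f$ is a local diffeomorphism onto its image, so by the area formula the pushforward of Lebesgue measure on $\R$ under $f$ is absolutely continuous with respect to the arc-length measure on $\gamma$, and conversely (up to a finite multiplicity factor if $f$ is not injective). In particular, a set $E \subset \gamma$ has arc-length measure zero iff $f^{-1}(E) \subset \R$ has Lebesgue measure zero, so ``for a.e.\ $t$'' and ``for a.e.\ point of $\gamma$'' correspond under $f$.

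Combining the two steps, Theorem \ref{thm_mainmultidimweyl3} supplies a full-Lebesgue-measure set $T \subset \R$ for which $\bigl(a_1(n) f_1(t), \ldots, a_k(n) f_k(t)\bigr)_{n \in \N}$ is u.d.\ in $\T^k$; then $f(T)$ has full arc-length measure in $\gamma$, and at every point $(x_1, \ldots, x_k) = f(t) \in f(T)$ the desired sequence is u.d. The main subtlety—and the only place where analyticity (rather than mere $C^\infty$ regularity) is genuinely used in this reduction—is the null-set correspondence, which hinges on the critical set of a non-constant analytic function being discrete. All other steps are essentially routine bookkeeping between the geometric and the analytic formulations.
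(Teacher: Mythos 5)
Your main line of attack — parametrize $\gamma$ by an analytic $f=(f_1,\ldots,f_k)$, translate the geometric hypothesis into the analytic one via ``non-degenerate $\iff \{1,f_1,\ldots,f_k\}$ linearly independent,'' invoke \cref{thm_mainmultidimweyl3}, and transfer the ``a.e.~$t$'' conclusion to ``a.e.~point of $\gamma$'' using that the critical set of a non-constant analytic $f$ is discrete — is exactly what the paper intends (it presents the theorem as ``an equivalent formulation'' of \cref{thm_mainmultidimweyl3} and leaves the translation implicit), and the null-set correspondence is handled correctly.

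The one genuine gap is the ``peeling off'' step for constant coordinates in the jointly \ie{} case. If some $f_i\equiv c$, then along all of $\gamma$ the $i$-th component of the sequence is the \emph{fixed} sequence $a_i(n)c$, which does not depend on the point of $\gamma$ being sampled. There is no ``for a.e.~$c$'' to appeal to here: $c$ is a single number determined by the curve. For $c=0$ the component is identically $0$, and more generally $a_i(n)c$ can easily fail to equidistribute (e.g.~$a_i$ integer-valued and $c$ rational). So the proposed ``one-dimensional Weyl theorem together with a conditional equidistribution argument'' cannot rescue this case — the conclusion is simply false there. The correct resolution is that the constant-coordinate case must be excluded: in the non-degenerate case it is excluded automatically (as you observed, independence of $\{1,f_1,\ldots,f_k\}$ forces each $f_i$ to be non-constant), while in the jointly \ie{} case one needs to read the non-constancy of each coordinate of the parametrization into the statement, matching the explicit hypothesis ``$f_1,\ldots,f_k$ non-constant'' in \cref{thm_mainmultidimweyl3}. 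With that understanding, the rest of your reduction goes through.
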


We remark that \cref{thm_curvesformulation} admits a ``multiparameter'' variant where the curve $\gamma$ is replaced by an analytic manifold in $\R^k$  of any dimension.
This variant follows easily from \cref{thm_curvesformulation} via a Fubini-type argument.

In \cite{Koksma35} Koksma proved an extension of \cref{thm_Weyl_intro} that implies that the sequence $(x^{a(n)})_{n\in\N}$ is u.d. mod 1 for almost every $x>1$, provided that $a:\N\to\R$ satisfies \eqref{eq_thm_Weyl_intro}.
Our next result is a joint generalization of this fact and \cref{thm_muldidimpolWeyl1,thm_muldidimpolWeyl2}:

\begin{theorem}\label{thm_polKoksmaWeyl}

        Let $g:\R\to(1,\infty)$ be a nonconstant analytic function and let $b:\N\to\R$ tend to $\infty$ and be \ie{}.
        Let $k\in\N$, let $p_1,\cdots, p_k\in\R[x]$ be non-constant polynomials and let $a_1,\dots,a_k:\N\to\R$ be \ie{}.
        Assume that either
    \begin{enumerate}
        \item\label{part1_thm_muldidimanalWeyl} The sequences $a_1,\dots,a_k$ are jointly \ie{},

        \item[or]
        \item\label{part2_thm_muldidimanalWeyl} The polynomials $\{1,p_1,\dots,p_k\}$ are linearly independent.
    \end{enumerate}
        Then, for almost every $x\in\R$, the sequence $\big(g(x)^{b(n)},a_1(n)p_1(x),\dots,a_k(n)p_k(x)\big)$ is u.d. in $\T^{k+1}$.
\end{theorem}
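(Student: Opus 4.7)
The plan is to combine Weyl's criterion with a Koksma-style $L^2$ oscillatory-integral argument, bootstrapping from the purely polynomial case already established in \cref{thm_muldidimpolWeyl1} and \cref{thm_muldidimpolWeyl2}. By \cref{thm_weylcriterion}, it suffices to prove that for every non-zero $v=(v_0,v_1,\dots,v_k)\in\Z^{k+1}$ the averages
\[
S_N(x):=\frac{1}{N}\sum_{n=1}^{N} e\Bigl(v_0\,g(x)^{b(n)}+\sum_{i=1}^{k}v_i\,a_i(n)\,p_i(x)\Bigr)
\]
converge to $0$ for a.e.\ $x\in\R$. The case $v_0=0$ is handled immediately by whichever of \cref{thm_muldidimpolWeyl1,thm_muldidimpolWeyl2} applies under the hypothesis in force, so the main work lies with $v_0\neq 0$.

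For $v_0\neq 0$, I would fix a compact interval $I\subset\R$ on which $g'$ never vanishes (possible since $g$ is non-constant analytic, so $g'$ has only isolated zeros); then $g\ge 1+\epsilon$ on $I$ for some $\epsilon>0$, and a countable cover reduces the theorem to a.e.\ convergence on $I$. The second-moment method gives
\[
\int_I |S_N(x)|^2\d x = \frac{|I|}{N}+\frac{1}{N^2}\sum_{1\le n\ne m\le N}I_{n,m},\qquad I_{n,m}:=\int_I e(\Phi_{n,m}(x))\d x,
\]
with phase $\Phi_{n,m}(x)=v_0\bigl(g(x)^{b(n)}-g(x)^{b(m)}\bigr)+\sum_i v_i(a_i(n)-a_i(m))\,p_i(x)$.

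The crucial observation is that, setting $N_0:=1+\max_i\deg p_i$, the $N_0$-th derivative annihilates the polynomial correction:
\[
\Phi_{n,m}^{(N_0)}(x)=v_0\bigl((g^{b(n)})^{(N_0)}-(g^{b(m)})^{(N_0)}\bigr).
\]
A direct computation shows that the leading $b$-term of $(g^{b})^{(N_0)}$ is $b(b-1)\cdots(b-N_0+1)(g')^{N_0}g^{b-N_0}$, bounded below on $I$ by $c\,b^{N_0}(1+\epsilon)^{b-N_0}$. Hence, once $\max(b(n),b(m))$ is large, $|\Phi_{n,m}^{(N_0)}|$ grows geometrically in $\max(b(n),b(m))$ on $I$, and van der Corput's $N_0$-th-derivative lemma yields $|I_{n,m}|\lesssim \rho^{\max(b(n),b(m))}$ for some $\rho\in(0,1)$. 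Summing over pairs and using $b(n)\to\infty$ gives $\int_I|S_N|^2\d x\to 0$. A Borel--Cantelli extraction along a sparse subsequence $N_j$ (chosen so that $\sum_j\int_I|S_{N_j}|^2\d x<\infty$ and $N_{j+1}/N_j\to 1$), combined with the trivial interpolation $|S_N|\le (N_j/N)|S_{N_j}|+(N-N_j)/N$ valid for $N_j\le N<N_{j+1}$, upgrades this to $S_N(x)\to 0$ for a.e.\ $x\in I$.

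The principal obstacle is securing the oscillatory-integral bound uniformly in $n,m$: the polynomial coefficients $v_i(a_i(n)-a_i(m))$ can be arbitrarily large (since \ie\ sequences are not assumed to have any growth restriction), so naive first-derivative estimates may fail due to cancellation against the exponential term. The circumvention described, in the classical Koksma spirit, is to apply van der Corput at a \emph{fixed} high order $N_0$ chosen to annihilate the polynomial perturbation entirely; only the uniform lower bound on $|(g^b)^{(N_0)}|$ on $I$ then remains, and this is robust because $g\ge 1+\epsilon$ and $g'\ne 0$ throughout $I$. A small amount of bookkeeping is needed to discard the finitely many pairs $(n,m)$ with both $b(n),b(m)$ small, but these contribute only $O(1/N)$ to the second moment and pose no obstruction.
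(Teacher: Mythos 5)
Your overall structure mirrors the paper's: Weyl's criterion, reduction to a compact interval where $g'\neq 0$, the $v_0=0$ case dispatched by the polynomial theorems, and for $v_0\neq 0$ a high-order van der Corput estimate at order $d>\max_i\deg p_i$ so that the polynomial phase is annihilated. However, there is a genuine gap in the key oscillatory-integral bound.

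You claim that once $\max(b(n),b(m))$ is large, $|\Phi_{n,m}^{(N_0)}(x)|=|v_0|\,|(g^{b(n)})^{(N_0)}(x)-(g^{b(m)})^{(N_0)}(x)|$ grows geometrically in $\max(b(n),b(m))$, so that $|I_{n,m}|\lesssim\rho^{\max(b(n),b(m))}$. This is false: when $b(n)$ and $b(m)$ are close the two terms nearly cancel. Concretely, $g^{b(n)}-g^{b(m)}=g^{b(m)}\bigl(g^{b(n)-b(m)}-1\bigr)\approx g^{b(m)}\log g\cdot(b(n)-b(m))$, so the magnitude of $\Phi_{n,m}^{(N_0)}$ is proportional to $|b(n)-b(m)|$, not to $\rho^{-\max(b(n),b(m))}$. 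In the extreme case $b(n)=b(m)$ the $g$-part of the phase vanishes identically. A tell-tale sign of the error is that your argument never uses the hypothesis that $b$ is \ie{}, only that $b(n)\to\infty$; the \ie{} condition is precisely what is needed to control the many pairs $(n,m)$ for which $b(n)$ and $b(m)$ are close. The paper's proof instead applies the mean value theorem in the $b$-variable to $\frac{\partial^d\psi}{\partial x^d}(x,b)$ (where $\psi(x,b)=g(x)^b$), obtaining a lower bound $C\,|b(n)-b(m)|$ valid once $b(n),b(m)$ exceed a threshold $b_0$, then feeds the resulting $\min\bigl(|b(n)-b(m)|^{-1/d},1\bigr)$ into the $1/d$-\ie{} estimate for $b$; the positivity of $C$ requires a careful analysis showing that $b\mapsto\frac{\partial}{\partial b}\frac{\partial^d\psi}{\partial x^d}(x,b)$ is $g(x)^b$ times a polynomial in $b$ with leading coefficient $(g'(x))^d\log g(x)$, uniformly bounded away from zero on $I$ for $|b|>b_0$. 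Your proposal needs to be repaired along these lines.
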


It seems reasonable to expect that an extension of \cref{thm_polKoksmaWeyl}, in a way similar to how \cref{thm_mainmultidimweyl3} extends \cref{thm_muldidimpolWeyl1,thm_muldidimpolWeyl2}, should hold, where polynomials are replaced by general analytic functions.
Unfortunately, our techniques do not seem sufficient to obtain such a result.
In this direction, we make the following conjecture (see also \cref{sec_multikoksma} for other potential extensions of \cref{thm_polKoksmaWeyl}).

\begin{conjecture}\label{conj_koksma}
\cref{thm_polKoksmaWeyl} holds when $p_1,\dots,p_k$ are general nonconstant analytic functions.
\end{conjecture}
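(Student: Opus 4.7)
The plan is to combine the strategy of \cref{thm_polKoksmaWeyl} with the analytic-function toolkit that upgrades \cref{thm_muldidimpolWeyl1} to \cref{thm_mainmultidimweyl3}. By Weyl's criterion (\cref{thm_weylcriterion}), fixing a nonzero $(w,v_1,\dots,v_k)\in\Z^{k+1}$ and a bounded interval $J\subset\R$, the task is to prove that
$$S_N(x)\;:=\;\frac1N\sum_{n=1}^N e\!\left(w\,g(x)^{b(n)}+\sum_{i=1}^k v_i\,a_i(n)\,p_i(x)\right)\longrightarrow 0$$
for Lebesgue-a.e. $x\in J$. The standard route is an $L^2(J)$ bound $\int_J|S_N|^2\,dx\le C N^{-\delta}$, combined with Borel--Cantelli along a polynomial subsequence $N_\ell=\lfloor\ell^c\rfloor$ plus a routine telescoping argument to recover full a.e.\ convergence.

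Squaring and integrating yields a double sum of oscillatory integrals
$$I_{n,m}(J)\;=\;\int_J e\!\left(w\bigl(g(x)^{b(n)}-g(x)^{b(m)}\bigr)+\sum_i v_i\bigl(a_i(n)-a_i(m)\bigr)p_i(x)\right)dx,$$
with the diagonal $n=m$ contributing $O(1/N)$. The off-diagonal terms split according to whether $w=0$ or $w\ne 0$. In the regime $w=0$, the phase is exactly of the form appearing in the proof of \cref{thm_mainmultidimweyl3}, so the bounds established there, which exploit analyticity of the $p_i$ together with the (possibly joint) \ie{} assumption on the $a_i$, apply verbatim. The genuinely new regime is $w\neq 0$, where the phase derivative is dominated by $w\,b(n)\,g(x)^{b(n)-1}g'(x)$ wherever $g'\ne 0$, and so is very large as soon as $b(n)$ grows.

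The plan for the $w\neq 0$ regime is to invoke a van der Corput--type estimate for oscillatory integrals with analytic phase, of the flavor presumably furnished by Jim Wright's appendix. Outside a neighborhood of the finitely many zeros of $g'$ in $J$, one expects a bound of the form $|I_{n,m}|\ll \bigl(b(n)g(x)^{b(n)}\bigr)^{-1/2}$, or a better version governed by $|b(n)-b(m)|$; near a zero of $g'$, a stationary-phase or resolution-of-singularities analysis, made quantitative via a Łojasiewicz-type inequality for the combined analytic phase, should complete the estimate. Because $b(n)\to\infty$ and $g>1$, this decay is \emph{exponential} in $b(n)$, which easily dominates the double sum of size $N^2$ and delivers the required $L^2$ bound.

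The main obstacle, and precisely the reason this is formulated as a conjecture, lies in the uniformity of the oscillatory-integral bounds near degenerate points of the combined phase. For polynomial $p_i$, the degrees give hard a priori control on the order of vanishing of the phase derivative, and Vinogradov--type estimates yield bounds on $I_{n,m}$ that are uniform in $(n,m)$; for general analytic $p_i$, the order of vanishing of
$$\frac{d}{dx}\!\left(w\bigl(g(x)^{b(n)}-g(x)^{b(m)}\bigr)+\sum_i v_i(a_i(n)-a_i(m))\,p_i(x)\right)$$
at its zeros can, a priori, depend on $(n,m)$ in an uncontrolled way, since a varying real-linear combination of analytic functions can acquire arbitrarily high multiplicity zeros. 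Overcoming this would likely require either a compactness-based stratification of the parameter space into regions on which the Newton polygon of the combined analytic family is constant, or a genuinely new oscillatory-integral estimate that is insensitive to the multiplicity of critical points of the phase; either step is the substantive content that the authors' current methods are missing.
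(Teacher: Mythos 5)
This statement is Conjecture~\ref{conj_koksma}, which the paper explicitly does \emph{not} prove---the authors state that their techniques do not seem sufficient. So there is no proof in the paper against which your proposal can be checked. What can be assessed is whether you correctly diagnose why the conjecture is open, and on that score your closing paragraph is essentially on target: it matches the authors' own commentary after the proof of \cref{thm_polKoksmaWeyl}, namely that in the polynomial case one differentiates $d$ times with $d$ exceeding every $\deg p_i$ so that the polynomial terms vanish identically and only the $g(x)^{b(\cdot)}$ term survives; for general analytic $p_i$ no finite order of differentiation does this, so one cannot preclude that the relevant derivative of the combined phase becomes small or vanishes to high order for many pairs $(n,m)$ --- equivalently, that the supremum in the van der Corput estimate is large or infinite.

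That said, several details in your sketch are wrong or misleading, and it is worth flagging them. First, the claimed $L^2$ bound $\int_J|S_N|^2\,dx\ll N^{-\delta}$ is stronger than anything the scatteredness hypothesis can deliver; the proofs in the paper only produce $(\log N)^{-1-\epsilon}$, and accordingly use the sparse subsequence $N_r=e^{r^{1-\epsilon'}}$ together with \cref{lemma_sublacunary}, not a polynomial subsequence with Borel--Cantelli. Second, in the $w\neq 0$ regime the relevant phase in $I_{n,m}$ is $w\bigl(g(x)^{b(n)}-g(x)^{b(m)}\bigr)+\sum_i v_i\bigl(a_i(n)-a_i(m)\bigr)p_i(x)$; your claim that the derivative is dominated by $w\,b(n)g(x)^{b(n)-1}g'(x)$ and yields decay that is exponential in $b(n)$ ignores the difference structure. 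When $b(n)$ and $b(m)$ are close, the exponential terms nearly cancel and there is no reason for them to dominate the $a_i$-terms (indeed the paper's polynomial-case bound is $\ll |b(n)-b(m)|^{-1/d}$, nothing like exponential). The polynomial proof never argues by domination --- it differentiates until the $p_i$-terms disappear --- and it is exactly the absence of such a clean mechanism for general analytic $p_i$ that is the open difficulty you correctly name at the end.
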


In support of \cref{conj_koksma} we have the following result, corresponding to the case $k=1$, which can be obtained as a corollary of \cref{thm_polKoksmaWeyl} via a ``change of variable'' maneuver.
\begin{corollary}\label{thm_weyl_koksmafusion}
        Let $f,g:\R\to\R$ be non-constant analytic functions such that $g(\R)\subset(1,\infty)$ and let $a,b:\N\to\R$ be \ie{} sequences.
        Then for almost every $x\in\R$ the sequence $\big(g(x)^{b(n)},a(n)f(x)\big)$ is u.d. on $\T^2$.
\end{corollary}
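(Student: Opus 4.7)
The plan is to deduce \cref{thm_weyl_koksmafusion} from the $k=1$ case of \cref{thm_polKoksmaWeyl} by means of the change of variable $y=f(x)$: after this substitution, $a(n)f(x)$ becomes $a(n)\cdot y$, which matches the form $a_1(n)p_1(y)$ appearing in \cref{thm_polKoksmaWeyl} with $p_1(y)=y$, a non-constant polynomial for which $\{1,p_1\}$ is linearly independent over $\R$ (so hypothesis (2) of the theorem is verified).

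First I would localize to intervals on which $f$ is a diffeomorphism. Since $f$ is non-constant analytic, the zero set of $f'$ is discrete, so $\R$ minus this set is a countable disjoint union of open intervals $\{I_j\}_j$ on each of which $f$ is strictly monotone and hence an analytic diffeomorphism onto $J_j:=f(I_j)$. A countable union of null sets being null, it suffices to prove, for each fixed $j$ separately, that the set of $x\in I_j$ for which $(g(x)^{b(n)},a(n)f(x))$ fails to be uniformly distributed has Lebesgue measure zero.

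Next, fix $j$ and set $I=I_j$, $J=J_j$, $\psi:=(f|_I)^{-1}:J\to I$. Because $f|_I$ is an analytic diffeomorphism, it and its inverse map Lebesgue null sets to Lebesgue null sets. Substituting $y=f(x)$, the sequence $(g(x)^{b(n)},a(n)f(x))$ becomes $(\tilde g(y)^{b(n)},a(n)\cdot y)$ where $\tilde g:=g\circ\psi$ is non-constant analytic on $J$ and takes values in $(1,\infty)$. The desired conclusion on $I_j$ therefore reduces to: for almost every $y\in J$, the sequence $(\tilde g(y)^{b(n)},a(n)\cdot y)$ is uniformly distributed in $\T^2$. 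This is exactly the statement of \cref{thm_polKoksmaWeyl} applied with $k=1$, $a_1=a$, $p_1(y)=y$, and $\tilde g$ in place of $g$.

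The main obstacle is that \cref{thm_polKoksmaWeyl} is stated for a function $g:\R\to(1,\infty)$, whereas $\tilde g$ is only defined on the open interval $J$. This is a formal rather than substantive issue: uniform distribution is a pointwise-in-$x$ property, and the proof of \cref{thm_polKoksmaWeyl} reduces via Weyl's criterion to showing that a certain family of exponential averages vanishes for a.e.~value of the parameter; such a statement can be verified on any open interval, so the same proof yields the analogous conclusion for an analytic function defined on an arbitrary open interval rather than on all of $\R$. Once this minor extension of \cref{thm_polKoksmaWeyl} is in place (or stated and proved along with the original theorem), the argument above completes the proof of \cref{thm_weyl_koksmafusion}.
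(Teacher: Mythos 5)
Your proposal is correct and follows essentially the same route as the paper: localize to intervals on which $f$ is a diffeomorphism, perform the change of variable $y=f(x)$ so that the second coordinate becomes linear, apply \cref{thm_polKoksmaWeyl} with $k=1$ and $p_1(y)=y$ to the composed function $g\circ f^{-1}$, and pull the a.e.\ conclusion back through $f$ (the paper cites \cref{prop_analyticabscont} for this last step, where you invoke the equivalent fact that an analytic diffeomorphism preserves null sets). The only cosmetic difference is that the paper covers the complement of $\{f'=0\}$ by countably many compact intervals rather than using the open connected components; your explicit remark that \cref{thm_polKoksmaWeyl} localizes to intervals is sound and in fact is also implicitly used in the paper's proof.
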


We conclude this section with the formulation of a general question which encompasses all the results formulated above.

\begin{question}\label{question_KoksmaWeyl2k}
    Let $k\in\N$, $f_1,\dots,f_k:\R\to\R$ and $g_1,\dots,g_k:\R\to(1,\infty)$ be analytic functions and let $a_1,\dots,a_k,b_1,\dots,b_k:\N\to\R$ be \ie.
    Under which conditions on $f_i,g_i,a_i,b_i$ is it true that for almost every $x\in\R$ the sequence
    $$\Big(g_1(x)^{b_1(n)}\ ,\ \dots\ ,\ g_k(x)^{b_k(n)}\ ,\ a_1(n)f_1(x)\ ,\ \dots\ ,\ a_k(n)f_k(x)\Big)_{n=1}^\infty$$
    is uniformly distributed on $\T^{2k}$?
\end{question}

\subsection{Structure of the paper}
The paper is organized as follows: \cref{sec_expository} is a partly expository section dedicated to an introduction to metric equidistribution theorems following the footsteps of Weyl and Koksma. In particular, we provide modern proofs of Weyl's theorem and Koksma's theorem; showcase the role of asymptotic orthogonality and demonstrate how it leads to amplifications of a classical theorem of Raikov; introduce and explore the notion of \ie{} sequences, including comparison with Weyl's growth condition and finally prove \cref{thm_muldidimpolWeyl2}.

In \cref{sec_proofofmainthm} we first prove \cref{thm_mainmultidimweyl1}, which extends the scope of \cref{thm_muldidimpolWeyl1}.
Its proof combines the arguments used to derive \cref{thm_muldidimpolWeyl2} with the following oscillatory integral estimate, due to J. Wright, the proof of which is given in the appendix.
\begin{theorem}\label{thm_wright}
            Let $k\in\N$, let $f_1,\dots,f_k:\R\to\R$ be analytic functions such that $\{1,f_1,\dots,f_k\}$ is linearly independent over $\R$, and let $I\subset\R$ be a compact interval.

            Then there exist $C,\delta>0$ such that for every non-zero $\lambda=(\lambda_1,\dots,\lambda_k)\in\R^k$,
            \begin{equation} \label{eq_wrightestimate}
            \left|\int_Ie\big(\lambda_1f_1(x)+\cdots+\lambda_kf_k(x)\big)\d x\right|\leq C\|\lambda\|^{-\delta}.
            \end{equation}
        \end{theorem}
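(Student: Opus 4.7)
The plan is to extract decay from a van der Corput-type estimate applied uniformly over the direction of $\lambda$. First I would write $\lambda=tu$ with $t=\|\lambda\|$ and $u\in S^{k-1}$, and set $F_u = u_1 f_1+\cdots+u_k f_k$. The target estimate reduces to
\[
\left|\int_I e(tF_u(x))\d x\right|\leq Ct^{-\delta}
\]
uniformly in $u\in S^{k-1}$ and $t\geq 1$ (the regime $t\leq 1$ is absorbed into the constant via the trivial bound $|I|$).

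The analytic heart of the argument is uniform non-degeneracy of the phase. Since $\{1,f_1,\dots,f_k\}$ is linearly independent over $\R$, the analytic curve $\gamma:I\to\R^k$ given by $\gamma(x)=(f_1(x),\dots,f_k(x))$ is not contained in any affine hyperplane; combined with analyticity, this forces the derivatives $\{\gamma^{(m)}(x_0):m\geq 1\}$ to span $\R^k$ at every $x_0\in I$, for otherwise Taylor expansion would place $\gamma$ locally, hence by analyticity globally, in an affine hyperplane through $\gamma(x_0)$. A standard compactness argument on $I$ then yields an integer $M$, a finite cover $I=I_1\cup\cdots\cup I_L$ by subintervals, and for each $l$ a tuple $m_1^l<\cdots<m_k^l\leq M$ such that $\big|\det[\gamma^{(m_j^l)}(x)]_{j=1}^k\big|$ is bounded below uniformly on $I_l$. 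Dualizing via $F_u^{(m)}(x) = u\cdot\gamma^{(m)}(x)$ and using $\|u\|=1$ extracts a constant $\mu>0$ with
\[
\max_{1\leq m\leq M}|F_u^{(m)}(x)|\geq\mu \quad\text{for all } u\in S^{k-1},\ x\in I.
\]

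The final step is a multi-derivative form of van der Corput's lemma: the pointwise lower bound on the maximum derivative, together with the uniform upper bounds on $F_u^{(m)}$ for $m\leq M+1$ that come for free from compactness, yields $\big|\int_I e(tF_u(x))\d x\big|\leq C\,t^{-1/M}$, and hence $\delta=1/M$. Concretely, one partitions $I$ into intervals on each of which a single $|F_u^{(m)}|$ (with $m\leq M$) is bounded below by $\mu/2$, and applies the classical van der Corput bound on each piece.

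The main obstacle, which I expect the appendix to address with care, is ensuring that the number of pieces in this partition is bounded uniformly in $u$. This reduces to a uniform upper bound on the zero count of the analytic function $F_u^{(m)}$ on $I$ as $u$ ranges over the compact sphere $S^{k-1}$---a finiteness principle for linear families of real analytic functions. Such a bound is provable by compactness: failure of uniformity would yield, along a subsequence, a limit direction $u^\ast$ whose $F_{u^\ast}^{(m)}$ accumulates infinitely many zeros and hence vanishes identically, contradicting the linear independence of $\{f_1^{(m)},\dots,f_k^{(m)}\}$, which itself follows by differentiating the linear independence of $\{1,f_1,\dots,f_k\}$.
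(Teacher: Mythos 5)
Your overall strategy is in the same spirit as the paper's (non-degeneracy of the phase in all directions, compactness, then van der Corput), but there is a genuine flaw in the step you yourself flag as the main obstacle, and the paper handles it differently.

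The error is in the last paragraph, where you claim that linear independence of $\{1,f_1,\dots,f_k\}$ implies, by differentiation, linear independence of $\{f_1^{(m)},\dots,f_k^{(m)}\}$ for each $m$. This is false. Take $f_1(x)=x$, $f_2(x)=x^2$: then $\{1,f_1,f_2\}$ is linearly independent, but $f_1''\equiv 0$, so $\{f_1'',f_2''\}$ is dependent and $F_{(1,0)}^{(2)}\equiv 0$. More generally, if any $f_i$ is a polynomial of degree $<m$, then some direction $u^\ast$ has $F_{u^\ast}^{(m)}\equiv 0$. Your Hurwitz-type compactness argument for a uniform zero-count bound breaks exactly in this case: as $u\to u^\ast$, the zero count of $F_u^{(m)}$ can blow up because the limit function is identically zero. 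In addition, the reduction from ``boundedly many partition pieces'' to ``uniform zero count of $F_u^{(m)}$'' is itself not quite right, since the pieces are bounded by level-set boundaries of $|F_u^{(m)}|$, not zeros; you would need to control zeros of all the differences $F_u^{(m)}\pm F_u^{(m')}$ as well, which runs into the same identical-vanishing obstruction.

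The paper sidesteps all of this by running the compactness argument jointly on $I\times\mathbb{S}^{k-1}$: for each $(y,\omega)$ it picks a single derivative order $d(y,\omega)$ with $\phi_\omega^{(d)}(y)\neq 0$, propagates this to a product neighborhood $I_y\times S_\omega$, extracts a finite subcover $\mathcal{M}$, and then, for a fixed direction $\omega_0$, the intervals $I_y$ with $\omega_0\in S_\omega$ cover $I$ and their number is automatically bounded by $|\mathcal{M}|$, independently of $\omega_0$. This gives the bounded partition ``for free'' and avoids ever comparing different derivative orders or proving a uniform zero-count principle. It then applies \cref{lemma_highordervdC} directly on each $I_y$ when $d\geq 2$, and integrates by parts for the $d=1$ case (your ``classical van der Corput'' gloss also elides the monotonicity hypothesis needed at $d=1$). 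Your Wronskian/determinant dualization does cleanly yield the intermediate statement $\max_{m\leq M}|F_u^{(m)}(x)|\geq\mu$ uniformly, which is a nice formulation, but extracting an oscillatory integral estimate from such a max lower bound with uniform constants is precisely the nontrivial part, and your sketch of it does not go through as written.
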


The second half of \cref{sec_proofofmainthm} contains a proof of \cref{thm_mainmultidimweyl3}, which builds upon \cref{thm_mainmultidimweyl1} but requires a more delicate analysis.
Lastly, in \cref{sec_koksmaweyl} we prove \cref{thm_polKoksmaWeyl} and discuss further potential extensions of Koksma's theorem to the multidimensional setting.

\subsection{Acknowledgments}

Part of the research contained in this paper was conducted while the authors were visiting the Institute for Advanced Studies at Princeton during parts of the 2022/23 academic year.
We are grateful to the Institute for its hospitality and support which was partly funded by the NSF grant DMS-1926686.
The second author is supported by the EPSRC Frontier Research Guarantee grant
EP/Y014030/1.
For the purpose of open access, the authors have applied a Creative Commons Attribution (CC-BY) license to any Author Accepted Manuscript version arising from this submission.

\section{A survey of metric uniform distribution}
\label{sec_expository}

Given a sequence $(u_n(x))_{n=1}^\infty$ depending on a parameter $x\in\R$, we are interested in conditions which guarantee that for almost every value of $x$ the sequence is uniformly distributed.
The first result in this direction, \cref{thm_Weyl_intro}, was obtained by Weyl in the pioneering work \cite{Weyl16}, thereby starting the field of metric uniform distribution.

In this section we present two fundamental theorems in this area, due to Weyl and Koksma, and present modern proofs.
We pay particular attention to the quantitative aspects of the assumption in Weyl's theorem, and define the notion of \ie{} sequences, which slightly extends its scope.
Throughout the section we adapt the relatively simple proof of \cref{thm_Weyl_intro} (presented in subsection \ref{sec_survey1}) to handle progressively more general results, culminating in subsection \ref{sec_surveylast} with a proof of \cref{thm_muldidimpolWeyl2}, a new multidimensional result that involves polynomial curves.

\subsection{Metric uniform distribution -- first results}
\label{sec_survey1}

We start this section with the following \cref{thm_weylintegers} which is a special case of \cref{thm_Weyl_intro} but already implies Borel's theorem (\cref{thm_Borelud}).
The proof of \cref{thm_weylintegers}, albeit rather simple, provides a useful prototype for the proofs of stronger results obtained later in this paper.

\begin{theorem}\label{thm_weylintegers}
   Let $(a_n)$ be an injective sequence of integers. Then for (Lebesgue) almost every $x\in\R$ the sequence $(a_nx)$ is uniformly distributed modulo 1.
\end{theorem}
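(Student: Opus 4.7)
The plan is to verify Weyl's criterion (\cref{thm_weylcriterion}) pointwise in $x$. Since the set of $x$ for which $(a_n x)$ is uniformly distributed mod $1$ is translation-invariant under $\Z$ (here we use that each $a_n \in \Z$, so replacing $x$ by $x+1$ shifts each $a_n x$ by an integer), and since a countable intersection of full-measure sets has full measure, it suffices to show that for each fixed nonzero $v \in \Z$,
\[
S_N(x) := \frac{1}{N}\sum_{n=1}^N e(v a_n x) \longrightarrow 0
\]
for a.e.\ $x \in [0,1]$. Moreover, since $(v a_n)_{n\in\N}$ is again an injective integer sequence (as $v \neq 0$), we may absorb $v$ into the sequence and prove the statement for $v = 1$.

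The main computation is a second-moment estimate. Expanding the square and using orthogonality of $\{e(m x)\}_{m \in \Z}$ on $[0,1]$, together with the injectivity hypothesis (which guarantees $a_n - a_m \neq 0$ whenever $n \neq m$), one finds
\[
\int_0^1 |S_N(x)|^2 \,\d x \;=\; \frac{1}{N^2}\sum_{n,m=1}^N \int_0^1 e\bigl((a_n - a_m)x\bigr)\,\d x \;=\; \frac{1}{N}.
\]
This is the heart of the argument: injectivity is exactly what is needed to kill all the off-diagonal terms. Applying this along the lacunary subsequence $N_k = k^2$ gives $\sum_k \int_0^1 |S_{N_k}(x)|^2 \,\d x < \infty$, so by monotone convergence (or Borel--Cantelli applied to the sets $\{|S_{N_k}| > \varepsilon\}$), we conclude that $S_{N_k}(x) \to 0$ for a.e.\ $x \in [0,1]$.

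The remaining step is to interpolate between the $N_k$'s. For $N_k \leq N < N_{k+1}$, the identity $N S_N(x) = N_k S_{N_k}(x) + \sum_{n=N_k+1}^N e(a_n x)$ combined with the trivial bound $|e(\cdot)| \leq 1$ yields
\[
\bigl|S_N(x) - \tfrac{N_k}{N} S_{N_k}(x)\bigr| \;\leq\; \frac{N_{k+1} - N_k}{N_k} \;=\; \frac{2k+1}{k^2} \;\longrightarrow\; 0,
\]
while $N_k/N \to 1$ since $N_{k+1}/N_k \to 1$. Hence $S_N(x) \to 0$ for a.e.\ $x$, completing the verification of Weyl's criterion. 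I do not anticipate a serious obstacle here; the only point requiring care is ensuring the countable intersection (over $v \in \Z \setminus \{0\}$) of the resulting full-measure sets, which is immediate. This skeleton—orthogonality plus injectivity to get an $L^2$ bound, then Borel--Cantelli along a sparse subsequence with a trivial interpolation—is precisely the template the authors announce they will generalize in the subsequent sections.
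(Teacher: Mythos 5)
Your proof is correct and follows essentially the same approach as the paper: Weyl's criterion, restriction to $[0,1]$ by periodicity, orthogonality of characters to obtain $\|S_N\|_{L^2[0,1]}^2 = 1/N$, summability along $N_k = k^2$ combined with monotone convergence, and an elementary interpolation between consecutive $N_k$. The only cosmetic difference is that you absorb the frequency $v$ into the sequence rather than carrying it through, which changes nothing of substance.
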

\begin{proof}

In view of Weyl's criterion (\cref{thm_weylcriterion}), which in the one dimensional case states that $(x_n)_{n\in\N}$ is uniformly distributed mod 1 if and only if for every $k\in\N$ the sequence $\frac1N\sum_{n=1}^Ne(kx_n)$ tends to $0$ as $N\to\infty$, it suffices to show that for every $k\in\N$ the sequence of functions $F_N(x):=\frac1N\sum_{n=1}^Ne(ka_nx)$ converges to $0$ almost everywhere.
Since each function $F_N$ is periodic with period one, we may restrict its domain to $[0,1)$.
Using the general fact that $\left|\frac1N\sum_{n=1}^N u_n -\frac1M\sum_{n=1}^M u_n \right|\leq2(1-N/M)$ whenever $N<M$ and $(u_n)$ is a sequence of complex numbers with $|u_n|=1$, it is enough to show that $F_{N^2}(x)\to0$ as $N\to\infty$ for almost every $x\in[0,1)$.
This is obviously equivalent to the statement that $\big|F_{N^2}(x)\big|^2\to0$ as $N\to\infty$ for almost every $x\in\R$.
As any summable sequence must tend to zero, we need only to show that $F(x):=\sum_N|F_{N^2}(x)|^2<\infty$ for almost every $x\in[0,1)$.
To establish this fact, it suffices to show that the integral $\int_0^1F(x)\d x$ is finite.
Using the monotone convergence theorem we get
\begin{equation}\label{eq_before4}
    \int_0^1F(x)\d x
    =
    \sum_{N=1}^\infty\int_0^1\big|F_{N^2}(x)\big|^2\d x
    =
    \sum_{N=1}^\infty\big\|F_{N^2}(x)\big\|^2_{L^2[0,1]}
\end{equation}

and, using orthogonality of characters (together with the fact that $a_n\neq a_m$ whenever $n\neq m$),
\begin{equation}\label{eq_orthogonalityofcharacters}
    \big\|F_{N}(x)\big\|^2_{L^2[0,1]}
=
\frac1{N^2}\sum_{n,m=1}^N\big\langle e(k a_n \cdot),e(k a_m\cdot)\big\rangle_{L^2[0,1]}
=
\frac1N.
\end{equation}
Equations \eqref{eq_before4} and \eqref{eq_orthogonalityofcharacters} together show that $\int_0^1F(x)\d x=
    \sum_{N=1}^\infty\big\|F_{N^2}(x)\big\|^2_{L^2[0,1]}=
    \sum_{N=1}^\infty1/N^2<\infty$, finishing the proof.
\end{proof}

\cref{thm_weylintegers} also holds for sufficiently spread-out sequences of real numbers.
Before addressing the full version of this extension in \cref{thm_weylreals} below, we discuss first a special case that suffices to recover \cref{thm_Weyl_intro} stated in the introduction.

\begin{theorem}[Weyl, 1916]\label{thm_weylrealsbaby}
    Let $(a_n)$ be a sequence of real numbers and suppose that there exists a gap $g>0$ satisfying $|a_n-a_m|\geq g$ for all $n\neq m$.
    Then for almost every $x\in\R$ the sequence $(a_nx)$ is uniformly distributed modulo 1.
\end{theorem}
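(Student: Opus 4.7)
The plan is to follow the strategy of the proof of \cref{thm_weylintegers} but to cope with three issues caused by replacing integer $a_n$ by $g$-separated reals: the functions $x\mapsto e(ka_nx)$ are no longer $1$-periodic, so restriction to $[0,1]$ is unavailable; the inner products $\langle e(ka_n\cdot),e(ka_m\cdot)\rangle$ are no longer exactly $0$; and the resulting off-diagonal terms must be controlled using the gap hypothesis.

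By Weyl's criterion (\cref{thm_weylcriterion}), it suffices to show that for every nonzero $k\in\Z$ the functions $F_N(x):=\tfrac{1}{N}\sum_{n=1}^N e(ka_nx)$ tend to $0$ almost everywhere on $\R$. I would replace the restriction to $[0,1)$ in the proof of \cref{thm_weylintegers} by a restriction to an arbitrary compact interval $[-M,M]$, proving the a.e.~convergence on each such interval; since a countable union of null sets is null, letting $M\in\N$ completes the proof. As in the integer case, I would use the elementary inequality $|\tfrac{1}{N}\sum_{n=1}^N u_n-\tfrac{1}{L}\sum_{n=1}^L u_n|\leq 2(1-N/L)$ (for $|u_n|=1$, $N<L$) to reduce to the subsequence $F_{N^2}$: for $N^2\leq L<(N+1)^2$ one has $|F_L(x)-F_{N^2}(x)|\leq 4/N\to 0$. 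Then, exactly as before, it would be enough to establish
\begin{equation*}
    \sum_{N=1}^\infty\int_{-M}^M|F_{N^2}(x)|^2\d x<\infty,
\end{equation*}
since then $\sum_N|F_{N^2}(x)|^2$ is finite a.e.\ on $[-M,M]$, forcing $|F_{N^2}(x)|^2\to 0$ a.e.

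Expanding the $L^2$ norm gives
\begin{equation*}
    \|F_N\|_{L^2[-M,M]}^2=\frac{1}{N^2}\sum_{n,m=1}^N\int_{-M}^M e\bigl(k(a_n-a_m)x\bigr)\d x.
\end{equation*}
The diagonal $n=m$ contributes $2M/N$. For the off-diagonal terms, direct integration yields
\begin{equation*}
    \left|\int_{-M}^M e\bigl(k(a_n-a_m)x\bigr)\d x\right|\leq\frac{1}{\pi|k|\,|a_n-a_m|}.
\end{equation*}
The main step is then a harmonic-sum bound exploiting the $g$-separation: for fixed $n$, the points $\{a_m:m\neq n\}$ form a $g$-separated set, so each annulus $\{y:jg\leq|y-a_n|<(j+1)g\}$ contains at most two of them. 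Consequently
\begin{equation*}
    \sum_{\substack{m=1\\m\neq n}}^N\frac{1}{|a_n-a_m|}\leq\sum_{j=1}^{N}\frac{2}{jg}=O_g(\log N),
\end{equation*}
and summing over $n$ gives $\sum_{n\neq m}|a_n-a_m|^{-1}=O_g(N\log N)$. Combining the diagonal and off-diagonal contributions yields $\|F_N\|_{L^2[-M,M]}^2=O_{M,k,g}(\log N/N)$, hence $\|F_{N^2}\|_{L^2[-M,M]}^2=O(\log N/N^2)$, which is summable in $N$.

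The main obstacle, compared with the integer case, is precisely the off-diagonal estimate: without exact orthogonality one needs both the localization to a bounded interval (to make the individual integrals finite) and the $g$-separation hypothesis (to control the resulting harmonic-type sum). The mild logarithmic loss is harmless because the subsequence trick replaces $N$ with $N^2$, restoring summability.
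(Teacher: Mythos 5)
Your proposal is correct and follows essentially the same route as the paper: reduce to $F_N\to 0$ a.e.\ on compact intervals via Weyl's criterion, pass to the subsequence $N^2$, bound the off-diagonal inner products by $\ll 1/|a_n-a_m|$, and use the $g$-separation to control the harmonic sum by $O(N\log N)$, giving $\|F_N\|_{L^2}^2\ll \log N/N$. The only cosmetic difference is that you compute the oscillatory integral $\int_{-M}^M e(k(a_n-a_m)x)\d x$ directly rather than quoting van der Corput's \cref{lemma_vdCestimate} as the paper does; for a linear phase this is equivalent, and the paper's choice is merely to introduce a tool it needs later for Koksma's theorem.
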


The proof of \cref{thm_weylrealsbaby} requires only minor modifications to the proof of \cref{thm_weylintegers} presented above.
When the $a_n$ are not integers, the functions $x\mapsto e(k a_n x)$ are not characters on $\R/\Z$ and hence are not pairwise orthogonal, invalidating the computation in \eqref{eq_orthogonalityofcharacters}.
Nevertheless, it turns out that when $a-b$ is sufficiently large, the functions $x\mapsto e(k a x)$ and $x\mapsto e(k b x)$ are ``almost orthogonal''.
This \emph{asymptotic orthogonality} allows us to estimate the $L^2$ norm of $F_N(x)=\frac1N\sum_{n=1}^Ne(k a_n x)$ and is formalized using the following inequality\footnote{ \cref{lemma_vdCestimate}, which was proved by van der Corput in 1921, was not available to Weyl in 1916.
As we will see, \cref{lemma_vdCestimate} streamlines the proofs and plays an important role in the rest of our paper.
Weyl's approach to proving \cref{thm_weylrealsbaby}, and the stronger \cref{thm_weylreals} formulated in \cref{sec_weylgrowth} below, uses instead a clever approximation technique that does not require \cref{lemma_vdCestimate} but does not seem to work in more general situations.
} due to van der Corput \cite{vdCorput21} (we use here the version from {\cite[Lemma 2.1, page 15]{Kuipers_Niederreiter74}}).
\begin{lemma}
\label{lemma_vdCestimate}

    Let $f:[a,b]\to\R$ be differentiable and such that $f'$ is continuous and monotone.
    Then
    $$\left|\int_a^be\big(f(x)\big)\d x\right|
    \leq\max\left(\frac1{|f'(a)|},\frac1{|f'(b)|}\right).$$
\end{lemma}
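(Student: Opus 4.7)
The plan is to establish the bound by an integration-by-parts argument, combined with the observation that the monotonicity of $f'$ makes the remainder integral telescopic. First I would reduce to the case $f'(a), f'(b) \neq 0$, since otherwise the right-hand side is $+\infty$ and the inequality is trivial. Under this assumption, if $f'(a)$ and $f'(b)$ had opposite signs then, by the intermediate value theorem applied to the continuous function $f'$, it would vanish somewhere in $(a,b)$, and monotonicity would force $f'$ to be zero on an interval abutting an endpoint -- contradicting non-vanishing at the endpoints. Hence, without loss of generality, $f'>0$ throughout $[a,b]$, and by monotonicity $1/f'$ is a continuous monotone function on $[a,b]$.

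The key manipulation is to write $e(f(x))\d x = \frac{1}{2\pi i f'(x)}\, \d\big(e(f(x))\big)$, which is valid because $f'$ does not vanish, and to apply integration by parts (in Riemann--Stieltjes form, so that no hypothesis on $f''$ is needed) to obtain
$$\int_a^b e(f(x))\d x = \left[\frac{e(f(x))}{2\pi i f'(x)}\right]_a^b - \int_a^b e(f(x))\, \d\!\left(\frac{1}{2\pi i f'(x)}\right).$$
The boundary term is immediately controlled by $\tfrac{1}{2\pi}\big(1/|f'(a)|+1/|f'(b)|\big)$. For the remaining Stieltjes integral, the integrator is a monotone real-valued function (up to the factor $1/(2\pi i)$), so its total variation is simply $\tfrac{1}{2\pi}\bigl|1/f'(b)-1/f'(a)\bigr|$, and the integral is bounded by this quantity since $|e(f(x))|=1$.

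Combining the two estimates, the total bound becomes
$$\frac{1}{2\pi}\left(\frac{1}{|f'(a)|}+\frac{1}{|f'(b)|}+\left|\frac{1}{|f'(a)|}-\frac{1}{|f'(b)|}\right|\right),$$
where the absolute-value simplification in the second factor uses that $f'$ has constant sign. Applying the elementary identity $A+B+|A-B|=2\max(A,B)$ to $A=1/|f'(a)|$ and $B=1/|f'(b)|$ collapses this to $\tfrac{1}{\pi}\max\bigl(1/|f'(a)|,1/|f'(b)|\bigr)$, which is strictly stronger than the stated bound.

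There is no essential obstacle: the proof is purely a calculus manipulation. The only mild subtlety is the passage to Riemann--Stieltjes integration by parts so as to avoid hypothesizing differentiability of $f'$; one could alternatively assume $f''$ exists and integrate in the classical sense, using that $\int_a^b |f''(x)|/f'(x)^2\, \d x = |1/f'(b)-1/f'(a)|$ by monotonicity of $f'$.
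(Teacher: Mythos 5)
The paper itself does not prove \cref{lemma_vdCestimate}; it is quoted from Kuipers and Niederreiter, so there is no proof in the paper to compare against. Your integration-by-parts mechanism is the standard route and, modulo the reduction step, is correct; indeed it produces the sharper constant $1/\pi$ in front of the maximum. However, your reduction to the case where $f'$ keeps a fixed sign contains a genuine error. You assert that if $f'(a)$ and $f'(b)$ have opposite signs then, by monotonicity, $f'$ would have to vanish on an interval abutting an endpoint; this is not so. The function $f'(x)=x$ on $[-1,1]$ is continuous, monotone, vanishes only at the single interior point $0$, and is nonzero at both endpoints. Worse, the inequality as literally written actually fails when $f'$ changes sign: take $f(x)=x^2/2$ on $[-1,1]$, so that $f'(\pm1)=\pm1$ and the right-hand side equals $1$, while $\bigl|\int_{-1}^1 e(x^2/2)\d x\bigr|=\bigl|\int_{-1}^1 e^{\pi i x^2}\d x\bigr|\approx 1.26>1$ (a Fresnel integral). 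The intended statement --- the one in the cited source, and the only one the paper ever invokes, since every application has $|f'|$ bounded below --- carries the additional hypothesis that $f'$ does not vanish on $[a,b]$, equivalently (by continuity and monotonicity) that $f'$ keeps a fixed sign. That hypothesis must be stated, not deduced: it does not follow from the ones listed. With it in place, your argument goes through cleanly: $1/f'$ is continuous and monotone with total variation $\bigl|1/f'(b)-1/f'(a)\bigr|$, the Riemann--Stieltjes integration by parts is valid, and the identity $A+B+|A-B|=2\max(A,B)$ gives the bound $\tfrac{1}{\pi}\max\bigl(1/|f'(a)|,1/|f'(b)|\bigr)$.
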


Using \cref{lemma_vdCestimate} it follows that\footnote{Here and elsewhere in the paper we use the notation $A\ll B$ to denote $A\leq CB$ for some constant $C>0$.}
\begin{equation}\label{eq_firstafter2.2}
    \big|\big\langle e(k a_n \cdot),e(k a_m\cdot)\big\rangle_{L^2}\big|=\left|\int e( k (a_n-a_m)x)\d x\right|\ll\frac1{|a_n-a_m|}.
\end{equation}
Since the set of real numbers $A:=\{a_n:n\in\N\}$ has any pair of points at distance at least $g>0$, it follows that
$\sum_{n=1}^{m-1}\frac1{|a_n-a_m|}\leq\sum_{n=1}^{m-1}\frac2{g n}\ll\log m$ and hence
\begin{equation}\label{eq_secondafter2.2}
    \sum_{1\leq n<m\leq N}\frac1{|a_n-a_m|}\ll N\log N.
\end{equation}
Combining \eqref{eq_firstafter2.2} and \eqref{eq_secondafter2.2}, it follows that the function $F(x):=\sum_N|F_{N^2}(x)|^2$ satisfies
$\int_a^bF(x)\d x=
    \sum_{N=1}^\infty\big\|F_{N^2}(x)\big\|^2_{L^2[a,b]}\ll
    \sum_{N=1}^\infty(\log N/N)^2<\infty$, finishing the proof of \cref{thm_weylrealsbaby}.

A far reaching extension of \cref{thm_weylrealsbaby} was obtained by Koksma in \cite[Satz 3]{Koksma35} (see \cite[Theorem 4.3, page 34]{Kuipers_Niederreiter74}).

\begin{theorem}[Koksma]\label{thm_koksmafull}
    Let $a<b$ and, for each $n\in\N$, let $u_n\in C^1([a,b],\R)$.
    Assume that there exists $g>0$ such that whenever $n\neq m$ the function $x\mapsto\big|u'_n(x)-u_m'(x)\big|$ is monotone and bounded from below by $g$.

    Then for almost every $x\in[a,b]$ the sequence $\big(u_n(x)\big)_{n\in\N}$ is uniformly distributed mod 1.
\end{theorem}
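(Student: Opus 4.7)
The plan is to adapt essentially verbatim the $L^2$ argument that proved \cref{thm_weylrealsbaby}, applying van der Corput's inequality to the nonlinear function $u_n-u_m$ instead of the affine function $(a_n-a_m)x$. By Weyl's criterion (\cref{thm_weylcriterion}) it suffices, for each fixed nonzero $k\in\Z$, to show that $F_N(x):=\frac1N\sum_{n=1}^N e(ku_n(x))\to 0$ for almost every $x\in[a,b]$. The same passage to squares $N\mapsto N^2$ used in \cref{thm_weylintegers} reduces this to $F_{N^2}(x)\to 0$ a.e., and by monotone convergence together with the fact that summable sequences tend to zero, it is enough to verify $\sum_{N\geq 1}\|F_{N^2}\|_{L^2[a,b]}^2<\infty$.

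I next expand $\|F_N\|_{L^2[a,b]}^2 = \frac{1}{N^2}\sum_{n,m=1}^{N}\int_a^b e\bigl(k(u_n(x)-u_m(x))\bigr)\d x$. The diagonal contributes $(b-a)/N$. For each off-diagonal pair $n\neq m$ I apply \cref{lemma_vdCestimate} to $f:=k(u_n-u_m)$: the derivative $f'=k(u_n'-u_m')$ is continuous and monotone, since $|u_n'-u_m'|$ is monotone by hypothesis while the difference itself, being continuous and uniformly at least $g>0$ in absolute value, has constant sign on $[a,b]$ and hence equals $\pm |u_n'-u_m'|$. Van der Corput then yields
\[
\left|\int_a^b e\bigl(k(u_n(x)-u_m(x))\bigr)\d x\right|\ \leq\ \frac{1}{|k|}\max\!\left(\frac{1}{|u_n'(a)-u_m'(a)|},\ \frac{1}{|u_n'(b)-u_m'(b)|}\right).
\]

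The final step is combinatorial. For each endpoint $x_0\in\{a,b\}$, specializing the hypothesis to $x=x_0$ shows that the values $\{u_n'(x_0):n\leq N\}$ are pairwise $g$-separated, so after reindexing them in increasing order the estimate reduces to the very inequality \eqref{eq_secondafter2.2} used in the proof of \cref{thm_weylrealsbaby}, giving $\sum_{1\leq n\neq m\leq N}|u_n'(x_0)-u_m'(x_0)|^{-1}\ll N\log N/g$. Summing the two endpoint contributions and dividing by $N^2$ yields $\|F_N\|_{L^2[a,b]}^2\ll(\log N)/N$ for each fixed $k$, so $\sum_N\|F_{N^2}\|_{L^2}^2\ll\sum_N(\log N)/N^2<\infty$, as required. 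The only subtlety compared to \cref{thm_weylrealsbaby} is the verification that $f'$ itself (and not merely $|f'|$) is monotone so that van der Corput applies; I expect this sign-constancy remark, which relies crucially on the strict lower bound $g>0$, to be the single substantive new point. The rest is an exact replay of the Weyl argument, with the separation of $\{u_n'(x_0)\}_n$ playing the role previously played by the separation of $\{a_n\}$.
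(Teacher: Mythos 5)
Your proof is correct and follows essentially the same route as the paper: Weyl's criterion, the van der Corput estimate (\cref{lemma_vdCestimate}) applied to $k(u_n-u_m)$, the $g$-separation of $\{u_n'(a)\}$ and $\{u_n'(b)\}$ to get $\|F_N\|_{L^2}^2\ll (\log N)/N$, and the passage through $N^2$ with monotone convergence as in \cref{thm_weylintegers}. The one point you spell out that the paper leaves implicit — that $u_n'-u_m'$, not merely $|u_n'-u_m'|$, is monotone, because $|u_n'-u_m'|\geq g>0$ forces constant sign by continuity — is a genuine and worthwhile verification needed to invoke \cref{lemma_vdCestimate}.
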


Note that \cref{thm_koksmafull} implies \cref{thm_weylrealsbaby} when applied to linear functions $u_n(x)=a_nx$.
Another famous corollary of \cref{thm_koksmafull} is that the sequence $(x^n)_{n\in\N}$ is uniformly distributed for almost every $x>1$, which can be derived from \cref{thm_koksmafull} by considering $u_n(x)=x^n$.

One can prove \cref{thm_koksmafull} by following the same basic steps as the proof of \cref{thm_weylrealsbaby} outlined above.
In order to obtain an appropriate analogue of \eqref{eq_orthogonalityofcharacters} one utilizes \cref{lemma_vdCestimate}.
More precisely, denoting by $F_N:[a,b]\to\R$ the function $F_N(x)=\frac1N\sum_{n=1}^Ne(ku_n(x))$, it follows from \cref{lemma_vdCestimate} that
\begin{eqnarray*}
    \|F_N\|^2_{L^2[a,b]}
&\leq&
\frac1{N^2}\sum_{n,m=1}^N\max\left(\frac1{|u_n'(a)-u_m'(a)|},\frac1{|u_n'(b)-u_m'(b)|}\right)
\\&\leq&
\frac1{N^2}\sum_{n,m=1}^N\frac1{|u_n'(a)-u_m'(a)|}+\frac1{N^2}\sum_{n,m=1}^N\frac1{|u_n'(b)-u_m'(b)|}.
\end{eqnarray*}
Since both sets $\{u_n'(a):n\in\N\}$ and $\{u_n'(b):n\in\N\}$ are $g$-separated sets of real numbers, it follows (using the same reasoning as in obtaining \eqref{eq_secondafter2.2}) that $\|F_N\|^2_{L^2(I)}\ll \log{N}/N$.
The conclusion of the proof now is the same as for \cref{thm_weylrealsbaby}.
By sharpening various elements of the above proof one can obtain the more general 
\cref{thm_betterkoksma} which is formulated and proved in \cref{sec_koksmarevisited}.

\subsection{A more general view of \cref{thm_weylintegers}}

An equivalent form of the definition of uniform distribution for a sequence $(x_n)_{n\in\N}$ of real numbers is that for every continuous function $f\in C(\T)$,
$$\lim_{N\to\infty}\frac1N\sum_{n=1}^Nf(\{x_n\})=\int_0^1 f(x)\d x.$$
A similar characterization of uniform distribution for sequences in $\R^d$ can be formulated in the obvious way (these equivalent forms were already present in \cite{Weyl16}).
In light of this fact, we can rephrase \cref{thm_weylintegers} as follows:
\begin{equation}\label{eq_weylfunctionalversion}
    \forall f\in C(\T),\qquad\frac1N\sum_{n=1}^Nf(\{a_nx\})\to\int f(x)\d x, \quad a.s.
\end{equation}
While \eqref{eq_weylfunctionalversion} resembles an ergodic theorem, a closer look at the proof of \cref{thm_weylintegers} given above reveals that the same statement holds when the averaging sets $\{1,\dots,N\}$ get replaced by \emph{any} (not necessarily F\o lner) sequence of strictly nested finite sets in $\N$, leading to the following ostensibly stronger result.
\begin{theorem}\label{theorem_weylanyaverages}
    Let $(a_n)_{n\in\N}$ be an injective sequence of integers and, for each $N\in\N$, let $S_N\subset\N$ be a finite set such that $S_N\subsetneq S_{N+1}$.
    Then
    \begin{equation*}
    \forall f\in C(\T),\qquad\frac1{|S_N|}\sum_{n\in S_N}f(\{a_nx\})\to\int f(x)\d x, \quad a.s.
\end{equation*}
\end{theorem}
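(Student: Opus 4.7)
My plan is to deduce the theorem directly from \cref{thm_weylintegers} by a compatible re-indexing of the union $S_\infty := \bigcup_{N \in \N} S_N$. The key observation is that the hypothesis $S_N \subsetneq S_{N+1}$ lets me enumerate $S_\infty$ as a sequence $n_1, n_2, \ldots$ of distinct positive integers in such a way that each $S_N$ is an initial segment: $S_N = \{n_1, \ldots, n_{|S_N|}\}$ for every $N$. Concretely, I list the elements of $S_1$ first (in any order), then those of $S_2 \setminus S_1$, then $S_3 \setminus S_2$, and so on; strict inclusion ensures each stage contributes at least one new element, and consequently $|S_N|$ is strictly increasing in $N$, so $|S_N| \to \infty$.

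Having done this, I set $c_i := a_{n_i}$. Since $(a_n)$ is injective by hypothesis and the indices $n_i$ are pairwise distinct, $(c_i)_{i \in \N}$ is an injective sequence of integers. \cref{thm_weylintegers} then yields a full-measure set $E \subseteq \R$ such that, for every $x \in E$, the sequence $(c_i x)_{i \in \N}$ is uniformly distributed modulo $1$; equivalently, for every $x \in E$ and every $f \in C(\T)$,
$$
\frac{1}{M}\sum_{i=1}^{M} f(\{c_i x\}) \longrightarrow \int_{\T} f \qquad \text{as } M \to \infty.
$$

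The conclusion then follows by specialising to the sequence $M_N := |S_N|$. Since $M_N$ is a strictly increasing sequence of positive integers, $M_N \to \infty$, and for any $x \in E$ convergence along all of $\N$ forces convergence along any subsequence tending to infinity. Moreover, the compatibility of the enumeration gives the identity
$$
\frac{1}{M_N}\sum_{i=1}^{M_N} f(\{c_i x\}) = \frac{1}{|S_N|}\sum_{n \in S_N} f(\{a_n x\}),
$$
which is precisely the desired convergence. I anticipate no serious obstacle: the entire content of the theorem is the bookkeeping observation that strict nesting of $(S_N)$ reduces each $S_N$-average of $f(\{a_n x\})$ to an initial-segment average of a suitably re-indexed injective integer sequence, to which \cref{thm_weylintegers} applies verbatim.
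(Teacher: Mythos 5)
Your argument is correct, and it is essentially the reduction the paper has in mind when calling this an ``ostensibly stronger'' result than \cref{thm_weylintegers}: strict nesting lets you re-enumerate $S_\infty$ so that each $S_N$ is an initial segment, after which \cref{thm_weylintegers} applied to the reindexed injective integer sequence $(c_i)$, together with passage to the subsequence $M_N=|S_N|$ of $\N$, gives the conclusion. The paper only gestures at this via the remark that ``a closer look at the proof of \cref{thm_weylintegers} reveals the same statement holds''; your black-box reduction to the \emph{statement} of \cref{thm_weylintegers} is a clean way to make that precise without reopening the $L^2$ argument.
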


In fact, one can remove the nested condition on the sets $(S_N)_{N\in\N}$, replacing it with a mild growth condition as follows.
\begin{theorem}\label{thm_hoeffding}
    Let $(a_n)_{n\in\N}$ be an injective sequence of integers and, for each $N\in\N$, let $S_N\subset\N$ be a finite set such that $\sum|S_N|^{-1}<\infty$.
    Then
    \begin{equation*}\label{eq_weylfunctionalversion3}
    \forall f\in C(\T),\qquad\frac1{|S_N|}\sum_{n\in S_N}f(\{a_nx\})\to\int f(x)\d x, \quad a.s.
\end{equation*}
\end{theorem}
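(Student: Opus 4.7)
The plan is to follow the template of the proof of \cref{thm_weylintegers} almost verbatim, exploiting the fact that the summability assumption $\sum|S_N|^{-1}<\infty$ is strong enough that the trick of passing to the subsequence $(N^2)$ used there becomes unnecessary. By uniform approximation of continuous functions on $\T$ by trigonometric polynomials (Weierstrass) and a standard countable-intersection argument, it suffices to show that for each fixed non-zero $k\in\Z$, the function
$$F_N(x):=\frac{1}{|S_N|}\sum_{n\in S_N}e(ka_nx)$$
converges to $0$ for almost every $x\in[0,1)$.

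First I would observe that, since a summable sequence of non-negative terms must tend to zero, it is enough to prove
$$\sum_{N=1}^\infty|F_N(x)|^2<\infty\qquad\text{for a.e. }x\in[0,1).$$
By monotone convergence, this will follow once one checks that $\sum_{N=1}^\infty\|F_N\|_{L^2[0,1]}^2<\infty$. This is the heart of the argument, and it reduces to the standard orthogonality computation: because $(a_n)$ is an injective sequence of integers, for $n\neq m$ one has $ka_n\neq ka_m$, so the characters $x\mapsto e(ka_nx)$ and $x\mapsto e(ka_mx)$ are orthogonal on $[0,1]$. Hence, exactly as in equation \eqref{eq_orthogonalityofcharacters},
$$\|F_N\|_{L^2[0,1]}^2=\frac{1}{|S_N|^2}\sum_{n,m\in S_N}\big\langle e(ka_n\cdot),e(ka_m\cdot)\big\rangle_{L^2[0,1]}=\frac{1}{|S_N|}.$$
Summing over $N$ and invoking the hypothesis $\sum|S_N|^{-1}<\infty$ completes the estimate.

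This shows $F_N\to 0$ a.e. for each fixed non-zero $k\in\Z$; intersecting the resulting countably many full-measure sets yields a single full-measure set on which the Weyl sums $F_N(x)$ tend to $0$ for \emph{every} non-zero $k\in\Z$. On this set, $|S_N|^{-1}\sum_{n\in S_N}P(\{a_nx\})\to\int_0^1 P\,dx$ for every trigonometric polynomial $P$, and uniform approximation extends this to all $f\in C(\T)$.

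There is essentially no hard step here: the only thing worth highlighting is why the $N\leadsto N^2$ passage needed in \cref{thm_weylintegers} disappears. In that theorem, the Cesàro averages over $\{1,\dots,N\}$ only satisfy $\|F_N\|_{L^2}^2=1/N$, which is not summable, so one restricts to $N^2$ and uses the Lipschitz-in-$N$ behaviour of Cesàro sums of unit complex numbers to interpolate. Here the sequence of averaging sets is prescribed and the summability is built into the hypothesis, so the $L^2$-estimate applies directly with no interpolation.
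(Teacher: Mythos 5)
Your argument is correct and follows essentially the same route as the paper: orthogonality of characters gives $\|F_N\|_{L^2[0,1]}^2=|S_N|^{-1}$, summability of $\sum|S_N|^{-1}$ then forces $F_N\to0$ almost everywhere, and the Weyl criterion finishes. The paper's proof is more terse but has the same structure, and your explanatory remark about why the $N\leadsto N^2$ interpolation from \cref{thm_weylintegers} is no longer needed is an accurate diagnosis of the role the summability hypothesis plays.
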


\begin{proof}
    Following the proof of \cref{thm_weylintegers}, for a fixed $k\in\N$ define the function $F_N(x):=|S_N|^{-1}\sum_{n\in S_N}e(ka_nx)$.
    As in \eqref{eq_orthogonalityofcharacters}, orthogonality of characters yields $\|F_N\|_{L^2[0,1]}^2=|S_N|^{-1}$ and hence the growth assumption implies that $\sum\|F_N\|_{L^2([0,1])}^2<\infty$.

    We deduce that $|F_N(x)|\to0$ almost everywhere as $N\to\infty$, and the desired conclusion now follows from Weyl's criterion.
\end{proof}
We remark in passing that each of \cref{theorem_weylanyaverages,thm_hoeffding} implies a strengthening of a result of Raikov \cite{Raikov36} (who dealt with the case $S_N=\{1,\dots,N\}$), replacing continuous functions with more general $L^1$ functions, but replacing almost sure convergence with weaker norm convergence.
\begin{corollary}\label{cor_raikov}
Let $(a_n)_{n\in\N}$ be an injective sequence of integers and, for each $N\in\N$, let $S_N\subset\N$ be a finite set such that either $S_N\subsetneq S_{N+1}$ or $\sum|S_N|^{-1}<\infty$.
    Then
    \begin{equation}\label{eq_raikov}
    \forall f\in L^1(\T),\qquad\frac1{|S_N|}\sum_{n\in S_N}f(\{a_nx\})\to\int f(x)\d x, \text{ in $L^1$-norm}.
\end{equation}
\end{corollary}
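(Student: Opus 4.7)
The plan is to reduce the $L^1$-norm statement for arbitrary $f \in L^1(\T)$ to the almost-sure statement for continuous $f$ supplied by \cref{theorem_weylanyaverages} (in the nested case) and \cref{thm_hoeffding} (in the summable case), via a standard density and contraction argument.

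First, I would observe that the averaging operator
$$A_Nf(x):=\frac{1}{|S_N|}\sum_{n\in S_N}f(\{a_nx\})$$
is a contraction on $L^1(\T)$. This is because for each nonzero $a\in\Z$ the map $x\mapsto ax$ on $\T$ preserves the Lebesgue measure, so $f\mapsto f(a\cdot)$ is an $L^1$-isometry; then $A_N$ is a convex combination of such operators, hence $\|A_Nf\|_{L^1}\leq\|f\|_{L^1}$ for all $N$ and all $f\in L^1(\T)$. (If some $a_n=0$, we may discard that single term, which affects neither the assumption nor the limit since $(a_n)$ is injective.) The same computation shows $A_N$ is a contraction on $L^\infty(\T)$.

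Next, for $f\in C(\T)$, either of \cref{theorem_weylanyaverages,thm_hoeffding} applies (according to which hypothesis on $(S_N)$ is in force) and yields $A_Nf(x)\to\int f\d\Leb$ for almost every $x\in[0,1)$. Since $|A_Nf(x)|\leq\|f\|_{L^\infty}$ uniformly in $N$ and $x$, the dominated convergence theorem upgrades this to
$$\bigl\|A_Nf-\textstyle\int f\d\Leb\bigr\|_{L^1(\T)}\longrightarrow 0.$$

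Finally, given $f\in L^1(\T)$ and $\varepsilon>0$, by density of $C(\T)$ in $L^1(\T)$ pick $g\in C(\T)$ with $\|f-g\|_{L^1}<\varepsilon$. Using the triangle inequality, the contraction bound on $A_N$, and $\bigl|\int(f-g)\bigr|\leq\|f-g\|_{L^1}$,
$$\bigl\|A_Nf-\textstyle\int f\bigr\|_{L^1}\leq\|A_N(f-g)\|_{L^1}+\bigl\|A_Ng-\textstyle\int g\bigr\|_{L^1}+\bigl|\textstyle\int(g-f)\bigr|\leq 2\varepsilon+\bigl\|A_Ng-\textstyle\int g\bigr\|_{L^1}.$$
The middle term tends to $0$ by the continuous case, so $\limsup_N\|A_Nf-\int f\|_{L^1}\leq 2\varepsilon$, and letting $\varepsilon\to 0$ completes the proof.

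There is no serious obstacle here: the substantive input is the almost-everywhere convergence for continuous functions already established in \cref{theorem_weylanyaverages,thm_hoeffding}, and the key (very light) new observation is that the averages form $L^1$-contractions because of the measure-preserving nature of $x\mapsto a_nx$ on $\T$. That contraction property is what allows the density argument to work at the level of $L^1$; norm convergence (rather than almost-sure convergence) is essentially the price one pays to accommodate discontinuous $f$, as one can no longer invoke bounded convergence to pass to the limit pointwise.
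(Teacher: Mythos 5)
Your proof is correct and takes the same approach as the paper: the paper's proof simply invokes the almost-everywhere result for continuous functions (from Theorems~\ref{theorem_weylanyaverages} and~\ref{thm_hoeffding}) and density of $C(\T)$ in $L^1(\T)$, and your write-up spells out the standard details behind that density argument ($L^1$-contractivity of the averages, dominated convergence, triangle inequality). Nothing to add.
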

\begin{proof}
    \cref{theorem_weylanyaverages,thm_hoeffding} imply that \eqref{eq_raikov} holds for continuous $f$.
    Since $C(\T)$ is dense in $L^1(\T)$, the conclusion follows.
\end{proof}
Another modern way to derive Raikov's theorem was presented in \cite[Section 2 (ii)]{Berend_Bergelson86}.

Recall the theorem of Philipp \cite[Theorem 2]{Philipp64}, mentioned in the introduction, which extends \cref{thm_weylintegers} to the setting of $d\times d$ matrices. Similarly to how we derived \cref{cor_raikov}, we obtain the following multidimensional extension.
\begin{corollary}
    Let $d\in\N$ and let $(A_n)_{n\in\N}$ be a sequence of non-singular $d\times d$ matrices with integer coefficients and such that $\det(A_n-A_m)\neq0$ whenever $n\neq m$.
    For each $N\in\N$, let $S_N\subset\N$ be a finite set such that either $S_N\subsetneq S_{N+1}$ or $\sum|S_N|^{-1}<\infty$.
    Then for any $f\in L^1(\T^d)$,
    \begin{equation*}
    \forall f\in L^1(\T^d),\qquad\frac1{|S_N|}\sum_{n\in S_N}f\big(\{A_nx\}\big)\to\int_{\T^d} f(x)\d x, \text{ in $L^1$-norm}.
\end{equation*}
\end{corollary}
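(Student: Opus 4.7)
The plan is to mimic the proof of \cref{cor_raikov} in higher dimensions, bypassing almost-sure convergence altogether since only $L^1$-norm convergence is claimed. The natural dense class to reduce to is the space of trigonometric polynomials on $\T^d$. So the two ingredients I will need are: $(i)$ convergence of the averages $T_Nf(x):=|S_N|^{-1}\sum_{n\in S_N}f(\{A_nx\})$ on each nontrivial character $e_v(x):=e(v\cdot x)$, $v\in\Z^d\setminus\{0\}$, and $(ii)$ uniform boundedness of $T_N$ on $L^1(\T^d)$.

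For $(i)$, fix nonzero $v\in\Z^d$ and set $F_N(x)=T_Ne_v(x)=|S_N|^{-1}\sum_{n\in S_N}e(v\cdot A_nx)$. Orthogonality of characters on $\T^d$ gives
$$\langle e(v\cdot A_nx),e(v\cdot A_mx)\rangle_{L^2(\T^d)}=\int_{\T^d}e\big((A_n-A_m)^{\!\top}v\cdot x\big)\d x,$$
which vanishes unless $(A_n-A_m)^{\!\top}v=0$. Since $\det(A_n-A_m)\neq0$ for $n\neq m$, the matrix $(A_n-A_m)^{\!\top}$ is injective, so the integral vanishes whenever $n\neq m$. Therefore $\|F_N\|_{L^2(\T^d)}^2=1/|S_N|$. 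Under either hypothesis $|S_N|\to\infty$ (strict nesting forces $|S_N|\geq N$, and summability forces $|S_N|\to\infty$), so $\|F_N\|_{L^2}\to0$, and hence $\|F_N\|_{L^1}\to0$ by Cauchy--Schwarz. By linearity, $\|T_Np\|_{L^1}\to0$ for every trigonometric polynomial $p$ with $\int_{\T^d}p=0$.

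For $(ii)$, I need to verify that each map $x\mapsto A_nx$ on $\T^d$ preserves Haar measure. This holds because $A_n$ has integer entries and is non-singular: the image $A_n([0,1]^d)$ is a fundamental parallelepiped that tiles $\R^d$, and when reduced mod $\Z^d$ it covers $\T^d$ exactly $|\det A_n|$ times, so a change of variables gives $\int_{\T^d}f(A_nx)\d x=\int_{\T^d}f$. Consequently $\|T_Nf\|_{L^1}\leq\|f\|_{L^1}$ for all $N$ and $f$. Now, given $f\in L^1(\T^d)$, set $c=\int_{\T^d}f$ and $g=f-c$. Note $T_Nf-c=T_Ng$. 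Given $\varepsilon>0$, choose a trigonometric polynomial $p$ of mean zero with $\|g-p\|_{L^1}<\varepsilon$; then
$$\|T_Nf-c\|_{L^1}=\|T_Ng\|_{L^1}\leq\|T_Np\|_{L^1}+\|T_N(g-p)\|_{L^1}\leq\|T_Np\|_{L^1}+\varepsilon,$$
and letting $N\to\infty$ followed by $\varepsilon\to0$ gives the desired $L^1$-convergence.

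There is no serious obstacle: the whole proof reduces to the orthogonality computation, and the only nontrivial point to double-check is that the non-singularity of $A_n-A_m$ (rather than merely $A_n\neq A_m$) is precisely what is needed to make the characters $e_v\circ A_n$ pairwise orthogonal for any fixed nonzero $v$.
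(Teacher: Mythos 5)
Your proof is correct. It departs from the paper's implicit argument in an interesting way. The paper obtains this corollary ``similarly to \cref{cor_raikov}'': first establish the multidimensional analogue of \cref{theorem_weylanyaverages} and \cref{thm_hoeffding} (almost-sure convergence for continuous $f$, via the $L^2$-orthogonality computation $\|F_N\|_{L^2(\T^d)}^2=|S_N|^{-1}$ followed by a Borel--Cantelli/subsequence argument and Weyl's criterion), and then pass to $L^1$ by density of $C(\T^d)$ in $L^1(\T^d)$. You instead bypass pointwise convergence entirely: the same orthogonality computation already shows $\|T_Ne_v\|_{L^2}\to0$ once $|S_N|\to\infty$, and the one genuinely new ingredient you supply --- that each $x\mapsto A_nx$ on $\T^d$ is Haar-measure-preserving (true because a non-singular integer matrix induces a surjective, hence measure-preserving, endomorphism of the compact group $\T^d$), so that $T_N$ is an $L^1$-contraction --- lets the density argument with mean-zero trigonometric polynomials close the proof directly at the level of norms. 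Your route is shorter and, as you implicitly observe, reveals that the hypothesis on $(S_N)$ is stronger than needed for the $L^1$ conclusion: $|S_N|\to\infty$ alone suffices (both ``$S_N\subsetneq S_{N+1}$'' and ``$\sum|S_N|^{-1}<\infty$'' imply it, but are only genuinely used in the paper to run the Borel--Cantelli argument for the \emph{almost-sure} statements). The paper's route, on the other hand, yields the stronger a.s.\ conclusion for continuous test functions along the way. One small point worth stating explicitly in a write-up: a trigonometric polynomial $q$ approximating $g=f-\int f$ in $L^1$ within $\varepsilon/2$ automatically has $|\int q|<\varepsilon/2$, so $p:=q-\int q$ is a mean-zero trigonometric polynomial with $\|g-p\|_{L^1}<\varepsilon$, as you need.
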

We conclude this subsection with a caveat.
Weyl's theorem, as captured by \eqref{eq_weylfunctionalversion}, deals with almost sure convergence of continuous functions and it implies Raikov's theorem (\cref{cor_raikov}) which deals with norm convergence of $L^1$ functions.
This leads to a natural question: does \eqref{eq_weylfunctionalversion} holds when $f\in L^1(\T)$?
A less ambitious question, asked by Khintchine in \cite{Khintchine23}, is whether \eqref{eq_weylfunctionalversion} holds when $f$ is the indicator function of a Borel set.
This was answered in the negative by Marstrand\footnote{In \cite{Khintchine23} Khintchine only asks a question, but judging by the title of \cite{Marstrand70}, Marstrand interpreted it as a conjecture.} in \cite{Marstrand70}, who produced a Borel set $A\subset\T$ with arbitrarily small measure such that
$$\limsup_{N\to\infty}\frac1N\sum_{n=1}^N1_A(nx)=1$$
for all $x$, showing that an analogue of \eqref{eq_weylfunctionalversion} where $f\in L^\infty(\T)$ (let alone $f\in L^1(\T)$), does not hold.

\subsection{Weyl's growth condition}
\label{sec_weylgrowth}
In \cite{Weyl16} Weyl proved a result that applies to a more general class of sequences than that covered by \cref{thm_weylrealsbaby}.
\begin{definition}[Weyl's growth condition]\label{def_weylgrowth}
     A (not necessarily monotone) sequence $(a_n)$ of real numbers satisfies \emph{Weyl's growth condition} if there exist $\epsilon,g>0$ such that $|a_m-a_n|>g$ whenever $m>n+n/(\log n)^{1+\epsilon}$.
\end{definition}
\begin{remark}
    Note that when $(a_n)$ is a (not strictly) increasing sequence of integers, the growth condition becomes $a_n\neq a_m$ for $m-n>n/(\log n)^{1+\epsilon}$.
    It was in this form that Weyl first formulated it in his paper.
\end{remark}

Any sequence satisfying the assumptions of \cref{thm_weylrealsbaby} trivially satisfies Weyl's growth condition.
Here is, finally, Weyl's theorem in full generality.

\begin{theorem}[Weyl, {\cite{Weyl16}}]\label{thm_weylreals}
    If the sequence $(a_n)$ satisfies Weyl's growth condition, then for almost every $x\in\R$ the sequence $(a_n x)_{n\in\N}$ is uniformly distributed mod 1.
\end{theorem}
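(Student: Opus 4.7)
The plan is to follow the template of the proof of \cref{thm_weylrealsbaby}: apply Weyl's criterion to reduce to an $L^2$ estimate on the Weyl sums $F_N(x):=\frac1N\sum_{n=1}^N e(ka_nx)$ for each fixed nonzero integer $k$, and use \cref{lemma_vdCestimate} to obtain the almost-orthogonality bound
\[
\bigl|\bigl\langle e(ka_n\cdot),e(ka_m\cdot)\bigr\rangle_{L^2[0,1]}\bigr|\leq\min\bigl(1,\,c_k/|a_m-a_n|\bigr).
\]
The only nontrivial step is then bounding $\|F_N\|_{L^2[0,1]}^2$ under Weyl's weaker growth hypothesis.

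The heart of the argument is the following counting consequence of \cref{def_weylgrowth}: for all sufficiently large $N$ and every interval $I\subset\R$ of length $g$,
\[
\#\{n\leq N:a_n\in I\}\leq C_0\,N/(\log N)^{1+\epsilon}.
\]
Indeed, if $n<m$ satisfy $a_n,a_m\in I$ then $|a_m-a_n|<g$, and the contrapositive of the growth condition forces $m-n\leq n/(\log n)^{1+\epsilon}$; splitting $\{1,\dots,N\}$ at $\sqrt N$, the indices $\geq\sqrt N$ with $a_n\in I$ all lie in a single interval of integer length $\leq 2^{1+\epsilon}N/(\log N)^{1+\epsilon}$, while the remaining indices number at most $\sqrt N$, which is negligible for $N$ large. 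Using this lemma, I would fix $n$ and decompose the $m$-sum dyadically by the size of $|a_m-a_n|$: the annulus $|a_m-a_n|\in[2^jg,2^{j+1}g)$ contains $\lesssim 2^jN/(\log N)^{1+\epsilon}$ indices, each contributing $\leq c_k/(2^jg)$, and only $O(\log\log N)$ annuli are non-empty (by the total count bound $\leq N$). Summing over all $n$ then yields
\[
\|F_N\|_{L^2[0,1]}^2\ll\frac{\log\log N}{(\log N)^{1+\epsilon}}.
\]

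Next, pick any $\alpha\in\bigl(1/(1+\epsilon),\,1\bigr)$ (the interval is nonempty since $\epsilon>0$) and set $N_j:=\lceil\exp(j^\alpha)\rceil$, so that $\|F_{N_j}\|_{L^2}^2\ll(\log j)/j^{\alpha(1+\epsilon)}$ is summable in $j$. Chebyshev plus Borel--Cantelli gives $F_{N_j}(x)\to 0$ for almost every $x$. For arbitrary $N\in[N_j,N_{j+1}]$, the decomposition $NF_N(x)=N_jF_{N_j}(x)+\sum_{n=N_j+1}^Ne(ka_nx)$ and the trivial bound on the tail imply
\[
|F_N(x)|\leq|F_{N_j}(x)|+(N_{j+1}-N_j)/N_j,
\]
and since $\alpha<1$ we have $(N_{j+1}-N_j)/N_j\sim\alpha j^{\alpha-1}\to 0$, whence $F_N(x)\to 0$ a.e., finishing the proof by Weyl's criterion.

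I expect the main obstacle to be the counting lemma together with its conversion to an $L^2$ bound with polynomial-in-$\log N$ decay: in the $g$-separated setting of \cref{thm_weylrealsbaby}, one obtains $\|F_N\|_{L^2}^2\ll(\log N)/N$ and summability is immediate along the polynomial subsequence $N_j=j^2$; under Weyl's weaker growth condition only the slower decay $\log\log N/(\log N)^{1+\epsilon}$ is available, forcing the sparser subsequence $\lceil\exp(j^\alpha)\rceil$. The balance between summability (which needs $\alpha(1+\epsilon)>1$) and density (which needs $\alpha<1$ so that $N_{j+1}/N_j\to 1$ for the bridging argument) precisely accounts for the exponent $1+\epsilon$ built into \cref{def_weylgrowth}.
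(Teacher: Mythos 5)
Your plan matches the paper's route to \cref{thm_weylreals}, which is obtained by specializing the more general \cref{thm_betterkoksma} (for $1$-\ie{} sequences, with $u_n(x)=a_nx$) and then invoking \cref{lemma_weylsgrowthsuffice} (Weyl's growth condition implies $1$-\ie). Your interval-counting lemma is the first step of the proof of \cref{lemma_weylsgrowthsuffice}, and your sparse subsequence $N_j\sim\exp(j^\alpha)$ together with the bridging estimate $(N_{j+1}-N_j)/N_j\to0$ is exactly the pattern captured in \cref{lemma_sublacunary} and used in the proof of \cref{thm_betterkoksma}. So the overall approach is essentially the same, just without the intermediate abstraction of $1$-\ie{} sequences.

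One step is not correct as written: ``only $O(\log\log N)$ annuli are non-empty (by the total count bound $\leq N$).'' The counting lemma gives an \emph{upper} bound $\lesssim 2^jN/(\log N)^{1+\epsilon}$ on the number of $m$ in the $j$-th annulus; this does not constrain how many annuli are non-empty (if, say, $a_n$ grows exponentially, non-empty annuli extend up to $j\sim N$). What is true, and what saves the argument, is that for $j$ beyond $j_0\sim(1+\epsilon)\log_2\log N$ the counting lemma becomes vacuous (its bound exceeds $N$), but the per-index weight $c_k/(2^jg)$ is then small enough that the whole tail contributes $\sum_{j>j_0}N\cdot c_k/(2^jg)\lesssim N/(\log N)^{1+\epsilon}$. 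With this fix your dyadic decomposition recovers the bound $\|F_N\|_{L^2}^2\ll(\log\log N)/(\log N)^{1+\epsilon}$; the paper obtains the same estimate via Abel summation with the sets $A_\ell$, splitting at $\ell_0=(\log N)^{1+\epsilon}$, and the two decompositions are equivalent.
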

Below we prove \cref{thm_betterkoksma}, which contains \cref{thm_weylreals} as a special case.
As will be seen, the proof of \cref{thm_betterkoksma} follows the same pattern as the proof of \cref{thm_koksmafull} given above, but with a somewhat more involved quantitative analysis.

\cref{thm_weylreals} applies to many natural sequences that do not satisfy the condition in \cref{thm_weylrealsbaby} above. Here is a natural family of examples:
\begin{example}\label{example_logcube}
    For every $\epsilon>0$, the sequence $a_n=(\log n)^{2+\epsilon}$ satisfies Weyl's growth condition.

    Indeed, if $m\geq2n$ then $|a_m-a_n|\geq\log 2$.
    For $m<2n$, using the mean value theorem we have $|a_m-a_n|=(m-n)(2+\epsilon)(\log x)^{1+\epsilon}/x$ for some $n\leq x\leq m$.
    If $m-n>n/(\log n)^{1+\epsilon'}$ for some $\epsilon'<\epsilon$, since $x\leq m< 2n$, it follows that $|a_m-a_n|\geq(\log n)^{\epsilon-\epsilon'}\gg1$. Hence $(a_n)_{n=1}^\infty$ satisfies Weyl's growth condition.
\end{example}
It is worth mentioning that \cref{example_logcube} is sharp in the sense that the sequence $a_n=(\log n)^2$ does \emph{not} satisfy Weyl's growth condition, see \cref{example_logsquare} below.
On the other hand, a theorem of Fej\'er (see, e.g., \cite[Theorem 2.5]{Kuipers_Niederreiter74}) states that if $(a_n)$ is a sequence whose discrete derivative $\Delta a_n:=a_{n+1}-a_n$ is decreasing and tends to zero and $n\cdot\Delta a_n$ tends to infinity, then $a_n$ is uniformly distributed mod 1.
It follows that the sequence $a_n=(\log n)^{1+\epsilon}$ satisfies the conclusion of Weyl's theorem for every $\epsilon>0$ (actually, in this case $(a_nx)_{n\in\N}$ is u.d. for \emph{every} nonzero $x\in\R$).
Fej\'er's theorem is complemented by the fact, proved by Dress \cite{Dress68}, that if $(a_n)$ is an increasing sequence satisfying $a_n=o(\log n)$, then for every real number $x$ the sequence $(a_n x)$ is \emph{not} uniformly distributed mod 1.
There is a curious discrepancy between the $(\log n)^{1+\epsilon}$ that is covered by F\'ejer's theorem and the $(\log n)^{2+\epsilon}$ that is guaranteed by Weyl's theorem; it would be desirable to better understand if it is possible to formulate a theorem in the spirit of Weyl's which applies to the sequence $a_n=(\log n)^{1+\epsilon}$ for $\epsilon\in(1,2]$.

Notice that Weyl's growth condition isn't exactly about the growth of the sequence but about how nearby terms clump together.
For instance the sequence $a_n=\exp(\exp(\lfloor \log n\rfloor))$ satisfies $a_n\geq e^{n-1}$ but it does not satisfy Weyl's growth condition (nor the conclusion of \cref{thm_weylreals}).
As pointed out by Ruzsa in \cite{Ruzsa83}, the property that allows the proof of Weyl's theorem to run is that there are not too many pairs $(n,m)$ with $|a_n-a_m|<1$; Ruzsa then shows in \cite[Theorem 3]{Ruzsa83} that in that sense Weyl's condition is optimal.

\subsection{\Ie{} sequences}\label{sec_iesequences}
We start this subsection with the precise definition of the averaged variant of Weyl's growth condition that we use in this paper.
\begin{definition}
\label{def_ie}
   Fix $\delta\in(0,1]$.
A sequence $a:\N\to\R$ is called \emph{$\delta$-\ie{}} if there exists $\epsilon>0$ such that for every sufficiently large $N$,
    \begin{equation}\label{eq_iedef}
\frac1{N^2}\sum_{1\leq m<n\leq N}\min\Big(\big|a(n)-a(m)\big|^{-\delta},1\Big)\leq \frac1{(\log N)^{1+\epsilon}}.
    \end{equation}
We say that the sequence $a$ is \emph{\ie{}} if it is $\delta$-\ie{} for \emph{every} $\delta\in(0,1]$.
\end{definition}

Observe that the property of being $\delta$-\ie{} gets stronger as $\delta$ decreases and so requiring a sequence to be $1$-\ie{} places the weakest restriction on it.
Below we will see that if a sequence satisfies Weyl's growth condition (see \cref{def_weylgrowth}), then it is $1$-\ie{}, but not the other way around.
In particular, \cref{thm_betterkoksma} below, which applies to 1-\ie{} sequences, contains \cref{thm_weylreals} as a special case.
On the other hand, for any $\delta<1$ there are sequences that satisfy Weyl's growth condition but are not $\delta$-\ie.
Therefore the multidimensional theorems stated in the introduction, while being more subtle in many respects, require the stronger assumption that the sequences involved are $\delta$-\ie{} for every $\delta$, and hence do not reduce to \cref{thm_weylreals} in full generality when the dimension is 1.

The following lemma shows that composition with functions that grow linearly or faster preserves the property of being \ie{}.
\begin{lemma}\label{lemma_iecomposition}
    Suppose $a:\N\to\R$ is $\delta$-\ie{}, $P:\R\to\R$ is differentiable and there exists $c>0$ such that $|P'(x)|>c$ for every $|x|>1/c$. Then $P\circ a$ is $\delta$-\ie.
\end{lemma}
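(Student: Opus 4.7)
The plan is to bound $S_{P\circ a}(N):=\sum_{m<n\leq N}\min(|P(a(n))-P(a(m))|^{-\delta},1)$ by $N^2/(\log N)^{1+\epsilon'}$ for some $\epsilon'>0$, starting from the hypothesis that the analogous quantity $S_a(N)$ satisfies $S_a(N)\leq N^2/(\log N)^{1+\epsilon}$.

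The hypothesis $|P'(x)|>c$ for $|x|>1/c$, combined with Darboux's theorem, forces $P'$ to have constant sign on each of $(1/c,\infty)$ and $(-\infty,-1/c)$, so $P$ is strictly monotonic on each connected component of $G:=\{|x|>1/c\}$. By the mean value theorem, $|P(x)-P(y)|\geq c|x-y|$ whenever $x,y$ lie in the same component of $G$, and a short calculation then gives $\min(|P(x)-P(y)|^{-\delta},1)\leq \max(1,c^{-\delta})\min(|x-y|^{-\delta},1)$ in that situation. I would split the pairs $(m,n)$ into ``good'' (both $a(n), a(m)$ in the same component of $G$) and ``bad'' pairs; by the above, good pairs contribute at most $\max(1,c^{-\delta})\cdot S_a(N)\leq CN^2/(\log N)^{1+\epsilon}$.

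For the bad pairs, the key auxiliary estimate is that any pair of indices with $|a(n)|,|a(m)|\leq K$ contributes at least $\min((2K)^{-\delta},1)$ to $S_a$, so $|\{n\leq N:|a(n)|\leq K\}|\ll_K N/(\log N)^{(1+\epsilon)/2}$. The bad pairs then fall into three subtypes: (i) both $a(n),a(m)\in[-1/c,1/c]$; (ii) one inside $[-1/c,1/c]$ and one in $G$; (iii) cross pairs with $a(n),a(m)$ in opposite components of $G$. Each subtype breaks further according to whether the ``$G$-side'' indices satisfy $|a|\leq K$ or $|a|>K$ for a constant $K$ depending on $P$: in the bounded case the count estimate yields contribution $\ll_K N^2/(\log N)^{1+\epsilon}$, and in the unbounded case the linear growth of $|P|$ away from $[-1/c,1/c]$ (from $|P'|>c$ on $G$ and continuity of $P$ on the compact set $[-1/c,1/c]$) restores a Lipschitz-type lower bound $|P(a(n))-P(a(m))|\geq c'|a(n)-a(m)|$, reducing the contribution back to $\ll S_a(N)$.

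The main obstacle lies in subtype (iii) when $P$ has the same asymptotic direction on both branches (for instance $P(x)=x^2$) and both $|a(n)|,|a(m)|$ are large: cross pairs with $a(m)$ near the $P$-reflection of $a(n)$ can have $|P(a(n))-P(a(m))|$ arbitrarily small. To handle this I would introduce the ``mirror map'' $r(x):=P_-^{-1}(P(x))$ from $G^+$ into $G^-$, verify that it is bi-Lipschitz on the relevant region (its derivative is $P'(x)/P'(r(x))$, whose magnitude is controlled from below by $|P'(x)|/c$ and from above by $c/|P'(r(x))|$ after restricting to $|x|$ sufficiently large), and use the inequality $|P(a(n))-P(a(m))|\geq c|a(m)-r(a(n))|$ to translate the remaining contribution into $\sum\min(|a(m)-r(a(n))|^{-\delta},1)$. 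This cross-correlation should be bounded by considering the combined sequence equal to $a(k)$ on indices with $a(k)\in G^-$ and to $r(a(k))$ on indices with $a(k)\in G^+$; the bi-Lipschitz nature of $r$ ensures this combined sequence inherits $\delta$-\ie{} from $a$, closing the argument.
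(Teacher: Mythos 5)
Your instinct to treat cross-branch pairs separately is well founded, and in fact it uncovers a real subtlety that the paper's proof passes over in silence: the paper applies the mean value theorem uniformly to every pair $n,m\notin Z$, but if $a(n)>1/c$ and $a(m)<-1/c$ the intermediate point can lie inside $[-1/c,1/c]$ where nothing is assumed about $P'$ (take $P(x)=x^2$, $a(n)=2$, $a(m)=-2$: then $|P(a(n))-P(a(m))|=0$ while $|a(n)-a(m)|=4$). Your handling of same-sign pairs via the mean value theorem, and of the bounded or one-sided-bounded pairs via the counting estimate $|\{n\leq N:|a(n)|\leq K\}|\ll_K N/(\log N)^{(1+\epsilon)/2}$, is the right sort of refinement.

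The gap is in subtype (iii), and it is twofold. First, the bi-Lipschitz claim for the mirror map $r=P_-^{-1}\circ P$ is not supported by the hypotheses. From $r'(x)=P'(x)/P'(r(x))$ and the single available estimate $|P'|>c$ on $\{|x|>1/c\}$, the correct one-sided bounds are $c/|P'(r(x))|<|r'(x)|<|P'(x)|/c$ (you have them reversed), and neither is uniform because there is no \emph{upper} bound on $|P'|$. Concretely, if $P(x)=x^2$ for $x>1/c$ and $P(x)=-x$ for $x<-1/c$ (smoothed in the middle so the hypotheses hold), then $r(x)=-x^2$ and $|r'(x)|=2|x|$ is unbounded, so $r$ is not even Lipschitz. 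Second, even if $r$ were bi-Lipschitz, the closing step is circular: bi-Lipschitzness of $r$ only controls the within-$G^+$ and within-$G^-$ pairs of the combined sequence $b$, whereas the assertion that ``$b$ inherits $\delta$-\ie{} from $a$'' also requires controlling the mixed pairs $\sum\min(|a(m)-r(a(l))|^{-\delta},1)$ --- which, after applying $|P(a(n))-P(a(m))|\geq c\,|a(m)-r(a(n))|$, is precisely the cross-branch quantity you set out to bound. One way to actually close subtype (iii) that avoids any regularity of $r$ is to pass to $u_n:=P(a(n))$ on $\{a(n)>1/c\}$ and $v_m:=P(a(m))$ on $\{a(m)<-1/c\}$ (each is $\delta$-\ie{} since $P$ is $c$-expanding on the relevant branch), and then bound the cross-energy $\sum\min(|u_n-v_m|^{-\delta},1)$ by $2^{\delta}\sqrt{(E_u+N)(E_v+N)}$ via Cauchy--Schwarz for the bilinear form of a positive-definite kernel comparable to $\min(|\cdot|^{-\delta},1)$, e.g.\ the Bessel kernel $(1+|\cdot|^2)^{-\delta/2}$; this reduces the cross contribution to the two same-branch energies, which are already controlled.
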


\begin{proof}
    Since $a$ is \ie, the set $Z:=\{n:|a(n)|\leq1/c\}$ has zero density.
    The mean value theorem implies that $|P(a(n))-P(a(m))|\geq c|a(n)-a(m)|$ whenever $n,m$ are outside $Z$.
    It now follows from \eqref{eq_iedef} that
    $$\frac1{N^2}\sum_{1\leq m<n\leq N}\min\Big(\big|P(a(n))-P(a(m))\big|^{-\delta},1\Big)\leq\frac {c^{-\delta}}{(\log N)^{1+\epsilon}}+o_{N\to\infty}(1).$$
    For any $\epsilon'<\epsilon$, if $N$ is sufficiently large we have $\frac {c^{-\delta}}{(\log N)^{1+\epsilon}}\leq \frac1{2(\log N)^{1+\epsilon'}}$, finishing the proof.
\end{proof}

Our next lemma shows that if a sequence satisfies Weyl's growth condition then it is $1$-\ie.

\begin{lemma}\label{lemma_weylsgrowthsuffice}
    Let $a:\N\to\R$ and suppose that there exist $\epsilon,g>0$ such that $|a(n)-a(m)|>g$ whenever $m>n+n/(\log n)^{1+\epsilon}$.
    Then for any $\epsilon'<\epsilon$ and every sufficiently large $N$,
    \begin{equation}\label{eq_iedef6}
\frac1{N^2}\sum_{1\leq m<n\leq N}\min\left(|a(n)-a(m)|^{-1},1\right)\leq  \frac1{(\log N)^{1+\epsilon'}}.
    \end{equation}
    and in particular $a$ is $1$-\ie{}.
\end{lemma}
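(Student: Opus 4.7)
The plan is to establish the following uniform counting bound, which is the only place where Weyl's hypothesis enters: for every $m \in [1, N]$ and every interval $I \subset \R$ of length $g$,
\[
\big|\{n \in (m, N] : a(n) \in I\}\big| \leq \frac{N}{(\log N)^{1+\epsilon}} + O(1).
\]
This follows at once from Weyl's condition applied to the extremal pair: if $n_1$ and $n_2$ are the minimum and maximum of this set, then $|a(n_1) - a(n_2)| \leq g$, so the contrapositive of the hypothesis forces $n_2 - n_1 \leq n_1/(\log n_1)^{1+\epsilon}$. Since the function $x/(\log x)^{1+\epsilon}$ is eventually monotone increasing, $n_1/(\log n_1)^{1+\epsilon} \leq N/(\log N)^{1+\epsilon} + O(1)$ on $[2, N]$.

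From here I would run a dyadic decomposition (after reducing to $g \leq 1$, which is WLOG). For fixed $m$, partition the indices $n \in (m, N]$ according to the value $|a(n) - a(m)|$: let $A_{-1} = \{n : |a(n) - a(m)| < g\}$ and $A_j = \{n : |a(n)-a(m)| \in [2^j g,\, 2^{j+1} g)\}$ for $j \geq 0$. Covering each annulus by $O(2^j)$ intervals of length $g$ (or $O(1)$ intervals for $A_{-1}$), the counting estimate gives $|A_j| \leq O(2^j) \cdot N/(\log N)^{1+\epsilon} + O(2^j)$. Combined with the trivial bound $|A_j| \leq N$ and the per-term bound $\min(1, 1/(2^j g))$, summing over $j$ yields
\[
S_m := \sum_{n \in (m, N]} \min\big(|a(n) - a(m)|^{-1},\, 1\big) = O\!\left(\frac{\log\log N}{g} \cdot \frac{N}{(\log N)^{1+\epsilon}}\right),
\]
where the crossover from the counting bound to the trivial bound occurs near $j \approx (1+\epsilon) \log_2 \log N$.

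Summing $S_m$ over $m \leq N$ yields the overall estimate $O((\log\log N/g) \cdot N^2/(\log N)^{1+\epsilon})$. For any $\epsilon' < \epsilon$ we have $\log\log N/g = o((\log N)^{\epsilon - \epsilon'})$, so this bound is $\leq N^2/(\log N)^{1+\epsilon'}$ for all sufficiently large $N$, proving \eqref{eq_iedef6}. The main obstacle is spotting the counting estimate: it converts Weyl's one-sided \emph{pairwise} gap assumption into a \emph{uniform} cap on how densely $a$-values can cluster inside any $g$-window, and once this is in place the remainder is routine dyadic bookkeeping.
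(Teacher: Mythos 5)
Your proposal is correct and follows essentially the same route as the paper: the crucial counting lemma (bounding the number of $a$-values landing in any $g$-window of $(m,N]$ via the one-sided gap hypothesis and the eventual monotonicity of $x/(\log x)^{1+\epsilon}$) is identical, and your dyadic decomposition with crossover near $2^{j_0}\approx(\log N)^{1+\epsilon}$ is just a repackaging of the paper's Abel/level-set summation with split point $\ell_0=(\log N)^{1+\epsilon}$. Both yield the same $O(\log\log N)$ loss over the $(\log N)^{1+\epsilon}$ denominator, which is then absorbed by passing from $\epsilon$ to $\epsilon'<\epsilon$.
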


\begin{proof}

Given $N\in\N$ and an interval $I\subset\R$ of length $g$, we have that the set $\{n\leq N:a(n)\in I\}$ has at most $N/(\log N)^{1+\epsilon}$ elements; indeed, if $n_0$ is the smallest element of that set, the largest element is at most $n_0+n_0/\log(n_0)^{1+\epsilon}$.
It follows that for an arbitrary bounded interval $I\subset\R$, breaking it into sub-intervals of length at most $g$, we have
$$\Big|\big\{n\leq N:a(n)\in I\big\}\Big|\leq\left(\frac{|I|}g+1\right)\frac{N}{(\log N)^{1+\epsilon}}.$$

As a consequence, for every length $\ell>0$, the set
$$A_\ell=A_\ell(N):=\Big\{(n,m):1\leq n<m\leq N,\ \big|a(n)-a(m)\big|\leq\ell\Big\}$$
has cardinality
\begin{equation}\label{eq_proof_weyl_growth}
    |A_\ell|
\leq
\sum_{n=1}^N\Big|\big\{m\leq N:\big|a(m)-a(n)\big|<\ell\big\}\Big|
\leq
N\cdot\frac{2\ell N}{g(\log N)^{1+\epsilon}}
=
\frac{2\ell N^2}{g(\log N)^{1+\epsilon}}.
\end{equation}
We can now write the left hand side of \eqref{eq_iedef6} as
\begin{eqnarray*}
    \frac1{N^2}\sum_{1\leq m<n\leq N}\min\left(|a(n)-a(m)|^{-1},1\right)
    &\leq&
    \frac1{N^2}\left(|A_1|+\sum_{\ell=1}^\infty\ell^{-1}\big(|A_{\ell+1}|-|A_{\ell}|\big)\right)
    \\&=&
    \frac1{N^2}\left(\sum_{\ell=1}^\infty|A_{\ell+1}|\Big(\ell^{-1}-(\ell+1)^{-1}\Big)\right).
\end{eqnarray*}
We can estimate this infinite sum by splitting it at some $\ell_0>1$.
Using \eqref{eq_proof_weyl_growth} for $\ell\leq\ell_0$, and the trivial bound $|A_\ell|\leq N^2$ for $\ell>\ell_0$, together with the elementary fact that $\sum_{\ell=\ell_0}^\infty\big(\ell^{-1}-(\ell+1)^{-1}\big)=\ell_0^{-1}$, we conclude that
$$\sum_{\ell=1}^\infty|A_{\ell+1}|\Big(\ell^{-1}-(\ell+1)^{-1}\Big)
\leq
\sum_{\ell=1}^{\ell_0-1}\frac{2 N^2}{\ell g(\log N)^{1+\epsilon}}
+
\frac{N^2}{\ell_0}
\leq
\log(\ell_0)\frac{2 N^2}{g(\log N)^{1+\epsilon}}+
\frac{N^2}{\ell_0}.
$$
Setting $\ell_0=(\log N)^{1+\epsilon}$ and dividing by $N^2$ we obtain
$$\frac1{N^2}\sum_{1\leq m<n\leq N}\min\left(|a(n)-a(m)|^{-1},1\right)
\leq
\frac{1}{(\log N)^{1+\epsilon}}\cdot\left(\frac{2(1+\epsilon)\log\log N}{g}+
1\right)$$
For any $\epsilon'<\epsilon$ and large enough $N$, this bound gets smaller than $\frac{1}{(\log N)^{1+\epsilon'}}$; in other words, the desired estimate \eqref{eq_iedef6} holds.
\end{proof}

The next remark describes a sufficient condition for a sequence to be \ie{} that resembles Weyl's growth condition.
\begin{remark}\label{remark_iegeneral}
    The proof above can be adapted to show that if there exist $\epsilon,g>0$ and $d\in\N$ such that $|a(n)-a(m)|>g$ whenever $m>n+n/(\log n)^{d+\epsilon}$,
    then for any $\epsilon'<\epsilon/d$ and every sufficiently large $N$,
    \begin{equation}\label{eq_iedef6.2}
\frac1{N^2}\sum_{1\leq m<n\leq N}\min\left(|a(n)-a(m)|^{-1/d},1\right)\leq  \frac1{(\log N)^{1+\epsilon'}},
    \end{equation}
    and hence $a(n)$ is $1/d$-\ie.

    In particular, if for some $\epsilon>0$ we have $|a(n)-a(m)|>g$ whenever $m>n+n^{1-\epsilon}$, then $a(n)$ is \ie.
\end{remark}

\begin{example}
    For every $\epsilon>0$ the sequence $a(n)=n^\epsilon$ is \ie.
    This can be quickly verified using \cref{remark_iegeneral}, but it can also be proved directly from the definition. Indeed for a fixed $\delta>0$ and each $m$,
    $$\sum_{n=m+1}^N\min\left(|a(n)-a(m)|^{-\delta},1\right)\leq\sum_{n=1}^Nn^{-\epsilon\delta}\ll N^{1-\epsilon\delta},$$
    so the left hand side of \eqref{eq_iedef} is bounded by $N^{-\epsilon\delta}$.

    More generally, any sufficiently regular sequences that grows faster than any power of $\log$ is $\ie$.
\end{example}

It is not true in general that a \ie{} sequence satisfies Weyl's growth condition, a counterexample is provided by the sequence $a(n)=n-\lfloor\sqrt{n}\rfloor^2$ (note that the first few terms of this sequence are $0,1,2,\,0,1,2,3,4,\,0,1,2,3,4,5,6,\,0,\dots$, and it is not too difficult to show that this sequence is indeed \ie{}; however it does not satisfy Weyl's growth condition since $a(n^2)=0$ for every $n\in\N$).
This example notwithstanding, for sufficiently regular sequences, the following converse of \cref{lemma_weylsgrowthsuffice} holds.
(We won't make use of this fact, but for completeness we present its proof.)
\begin{proposition}
    Let $a:\N\to\R$ be such that the discrete derivative $\Delta a:n\mapsto a(n+1)-a(n)$ is eventually monotone.
    If $a$ is $1$-\ie{}, then there exists $\epsilon>0$ such that $|a(m)-a(n)|>1$ whenever $m>n+n/(\log n)^{1+\epsilon}$.
\end{proposition}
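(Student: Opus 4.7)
The plan is to prove the contrapositive by building enough ``bad pairs'' $(i,j)$ (with $|a(j)-a(i)|\le 1$) to force the $1$-\ie{} sum to be too large. Since $\Delta a$ is eventually monotone, there are two cases. If $\Delta a$ does not tend to zero then it has constant sign and $|\Delta a(n)| \ge c$ for large $n$ and some $c > 0$; consequently $|a(m)-a(n)| \ge c(m-n) > 1$ whenever $m-n > 1/c$, so Weyl's growth condition holds trivially for any $\epsilon > 0$. Otherwise, after possibly replacing $a$ by $-a$, we may assume $\Delta a$ is eventually positive and monotone decreasing to $0$, so that $a$ is eventually increasing; this is the main case.

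In the main case, introduce the auxiliary quantity
\[
r_n := \max\{L \ge 0 : a(n+L) - a(n) \le 1\}.
\]
Using that $\Delta a$ is non-increasing, one checks that $r_n$ is non-decreasing in $n$: for $n' \ge n$ the index shift $k \mapsto k + (n'-n)$ yields
\[
a(n'+L) - a(n') = \sum_{k=0}^{L-1}\Delta a(n'+k) \le \sum_{k=0}^{L-1}\Delta a(n+k) = a(n+L) - a(n),
\]
so when the right side is $\le 1$ (i.e.\ $L = r_n$), the left side is too, giving $r_{n'} \ge r_n$.

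Let $\epsilon_0 > 0$ be the $1$-\ie{} exponent from \cref{def_ie} and fix any $\epsilon \in (0, \epsilon_0)$; we will show Weyl's condition holds with this $\epsilon$ for all sufficiently large $n$ (finitely many small exceptions can then be absorbed by slightly shrinking $\epsilon$). If not, there is $n_0$, arbitrarily large, with $r_{n_0} \ge n_0/(\log n_0)^{1+\epsilon}$. By the monotonicity of $r$, the same lower bound $r_n \ge n_0/(\log n_0)^{1+\epsilon}$ holds for every $n \ge n_0$. Set $N := 2n_0$ and let $I := [n_0,\, N - n_0/(\log n_0)^{1+\epsilon}]$, an interval of length $\ge n_0/2$ for large $n_0$; every $i \in I$ then satisfies $\min(r_i, N-i) \ge n_0/(\log n_0)^{1+\epsilon}$. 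Since $a$ is monotone, every pair $(i,j)$ with $i<j\le N$ and $j-i\le r_i$ has $a(j)-a(i)\le 1$, and therefore contributes $1$ to $\sum_{i<j\le N}\min(|a(j)-a(i)|^{-1},1)$. Counting yields
\[
\sum_{i<j\le N}\min\!\big(|a(j)-a(i)|^{-1},1\big) \ge \sum_{i\in I}\min(r_i, N-i) \ge \frac{n_0^2}{2(\log n_0)^{1+\epsilon}},
\]
and dividing by $N^2 = 4n_0^2$ gives a lower bound of order $1/(\log N)^{1+\epsilon}$ for the $1$-\ie{} average, contradicting the upper bound $1/(\log N)^{1+\epsilon_0}$ since $\epsilon < \epsilon_0$ and $N$ is arbitrarily large.

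The main obstacle is exploiting the monotonicity of $\Delta a$ efficiently. A naive count of bad pairs inside the single ``bad interval'' $[n_0, m_0]$ produces only $\sim (m_0-n_0)^2$ pairs, losing a factor of two in the log-exponent and giving a contradiction only when $\epsilon_0 > 1$. The crucial observation is that a single violating pair, combined with the monotonicity of $\Delta a$, forces $r_n$ to remain large for \emph{every} $n \ge n_0$; this makes the bad-pair count scale like $N \cdot r_{n_0}$ rather than $r_{n_0}^2$, which exactly matches the $1$-\ie{} exponent and yields a contradiction for every $\epsilon < \epsilon_0$.
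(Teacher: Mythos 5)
Your proof is correct and follows essentially the same strategy as the paper's: reduce to $\Delta a$ positive and decreasing, argue by contradiction from a single violation, use the monotonicity of $\Delta a$ to spread that violation across the whole window $[n_0,2n_0]$, count the roughly $n_0\cdot r_{n_0}$ bad pairs, and contradict the $1$-\ie{} bound. Your packaging through the run length $r_n$ and its monotonicity in $n$ is a slightly cleaner way to express what the paper captures by directly bounding $\Delta a(n)$ for $n>M$.
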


\begin{proof}
Replacing $a$ with $-a$ if needed, we can assume that $a$ is eventually positive.
Since $a$ is \ie, this implies that it is eventually increasing.
If $\Delta a$ is eventually increasing then $|a(n)-a(m)\geq g$ for some $g>0$ and all $n,m$ sufficiently large, which implies the desired conclusion, so let us assume that $\Delta a$ is decreasing.

    Let $\epsilon>0$ and suppose that $a(M+M/(\log M)^{1+\epsilon})<a(M)+1$ for infinitely many $M$.
    Then $\Delta a(M)<(\log M)^{1+\epsilon}/M$, and therefore $\Delta a(n)<(\log M)^{1+\epsilon}/M$ for all $n>M$.
    In particular, whenever $n>m>M$ but $n<m+M/(\log M)^{1+\epsilon}$, then $|a_n-a_m|<1$.
    Letting $N=2M$ we have
    \begin{eqnarray*}
        \frac1{N^2}\sum_{1\leq m<n\leq N}\min(|a(n)-a(m)|^{-1},1)
    &\geq&
    \frac1{N^2}\sum_{M\leq m<n<N}1_{n<m+M/(\log M)^{1+\epsilon}}
    \\&\geq&
    \frac1{N^2}\cdot\frac M{(\log M)^{1+\epsilon}}\left(M-\frac M{(\log M)^{1+\epsilon}}\right)
    \\&\geq&
    \frac18\cdot\frac1{(\log M)^{1+\epsilon}}
    \\&\geq&
    \frac18\cdot\frac1{(\log N)^{1+\epsilon}}.
    \end{eqnarray*}
    This precludes \eqref{eq_iedef} from holding with any $\epsilon'<\epsilon$. Since $a$ is \ie, this implies that for some $\epsilon>0$ we must have $|a(m)-a(n)|>1$ whenever $m>n+n/(\log n)^{1+\epsilon}$.
\end{proof}

The final example in this subsection, which was mentioned in \cref{sec_weylgrowth}, complements \cref{example_logcube}.

\begin{example}\label{example_logsquare}
    It is not too difficult to see that the sequence $a(n)=(\log n)^2$ does not satisfy Weyl's growth condition.
    In fact, it is not even $1$-\ie.
    To see this, we first check that for large $N\in\N$, if $n>N/2$ and $m>n-\frac N{8\log N}$, then $a(m)>a(n)-1$.
    Indeed, we have $m/n>1-\frac N{8n\log N}>1-\frac1{4\log N}$, so $\log m>\log n+\log(1-\frac1{4\log N})>\log n-\frac1{2\log N}$ if $N$ is large enough.
    Therefore
    $$
    a(n)-a(m)
    =
    (\log n)^2-(\log m)^2
    \leq
    \frac{\log n}{\log N}
    \leq
    1,
    $$
    proving the claim.
    Now it follows that $|a(n)-a(m)|\leq1$ whenever $n>N/2$ and $n-\frac N{8\log N}\leq m\leq n$, so
    $$\frac1{N^2}\sum_{1\leq n,m\leq N}\min(\frac1{|a(n)-a(m)|},1)\geq\frac1{N^2}\sum_{1\leq n,m\leq N}1_{|a(n)-a(m)|\leq1}\geq\frac1{16\log N},$$
    and hence $a(n)$ is not $1$-\ie.
\end{example}

\subsection{Revisiting Koksma's theorem}
\label{sec_koksmarevisited}
Note that, while Koksma's \cref{thm_koksmafull} implies \cref{thm_weylrealsbaby}, it does not contain the full theorem of Weyl (\cref{thm_weylreals}) as a special case.
The following theorem, which is a variant of another theorem of Koksma \cite[Satz 4]{Koksma35}, provides a common extension to \cref{thm_koksmafull,thm_weylreals}.
The rather streamlined proof we present here takes advantage of the notion of \ie{} sequences introduced in \cref{def_ie} and follows the basic steps already present in the proof of \cref{thm_weylintegers}.
\begin{theorem}[Koksma's theorem revisited]\label{thm_betterkoksma}
    Let $a<b$ and, for each $n\in\N$, let $u_n\in C^1([a,b],\R)$.
    Assume that whenever $n\neq m$ the function $x\mapsto\big|u'_n(x)-u_m'(x)\big|$ is monotone, and both sequences $n\mapsto u_n'(a)$ and $n\mapsto u_n'(b)$ are $1$-\ie.

    Then for almost every $x\in[a,b]$ the sequence $\big(u_n(x)\big)_{n\in\N}$ is uniformly distributed mod 1.
\end{theorem}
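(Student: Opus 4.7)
My plan is to adapt the $L^2$-then-subsequence-then-interpolate architecture of the proof of \cref{thm_weylintegers}, replacing the exact orthogonality of characters by the near-orthogonality furnished by \cref{lemma_vdCestimate}. By \cref{thm_weylcriterion} it suffices, for each nonzero $k\in\Z$, to show that $F_N(x):=\frac{1}{N}\sum_{n=1}^N e(ku_n(x))$ tends to $0$ for almost every $x\in[a,b]$. Set $\alpha_n:=u_n'(a)$ and $\beta_n:=u_n'(b)$. The hypothesis that $|u_n'-u_m'|$ is monotone forces $u_n'-u_m'$ to have constant sign on $[a,b]$ (a monotone nonnegative function vanishes, if at all, only at an endpoint), hence $u_n'-u_m'$ is itself monotone. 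Applying \cref{lemma_vdCestimate} with $f=k(u_n-u_m)$ bounds the off-diagonal inner product $|\langle e(ku_n),e(ku_m)\rangle_{L^2[a,b]}|$ by $\max(1/(k|\alpha_n-\alpha_m|),\,1/(k|\beta_n-\beta_m|))$; combining this with the trivial bound $b-a$ and expanding $\|F_N\|_{L^2([a,b])}^2$ yields
\begin{equation*}
\|F_N\|_{L^2([a,b])}^2 \ll_{k,a,b} \frac{1}{N} + \frac{1}{N^2}\sum_{1\leq m<n\leq N}\left(\min\left(\frac{1}{|\alpha_n-\alpha_m|},1\right) + \min\left(\frac{1}{|\beta_n-\beta_m|},1\right)\right).
\end{equation*}
The $1$-\ie{}ness of $(\alpha_n)$ and $(\beta_n)$ (\cref{def_ie} with $\delta=1$) then gives $\|F_N\|_{L^2([a,b])}^2\ll (\log N)^{-(1+\epsilon)}$ for some $\epsilon>0$ and all large $N$.

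Because this decay is too weak to sum over all $N$, I would pass to a subsequence $N_j:=\lfloor \exp(j^\alpha)\rfloor$ with $\alpha\in(1/(1+\epsilon),1)$. Then $\sum_j \|F_{N_j}\|_{L^2}^2 \ll \sum_j j^{-\alpha(1+\epsilon)}<\infty$, so monotone convergence yields $F_{N_j}(x)\to 0$ for almost every $x\in[a,b]$. To promote this to convergence along the full sequence, I would use the elementary bound $|F_M(x)-F_{N_j}(x)|\leq 2(1-N_j/M)$, valid for $N_j\leq M$ because $|e(ku_n)|=1$. Since $\log(N_{j+1}/N_j)\sim \alpha j^{\alpha-1}\to 0$ (as $\alpha<1$), this error tends to $0$ uniformly in $x$ and in $M\in[N_j,N_{j+1}]$, finishing the proof.

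The crux of the scheme is the subsequence selection: summability of $\sum_j\|F_{N_j}\|_{L^2}^2$ forces $\log N_j$ to grow at least like $j^{1/(1+\epsilon)+\eta}$, while the interpolation step forces $\log N_j$ to grow \emph{sublinearly} in $j$. The interval $(1/(1+\epsilon),1)$ of admissible exponents $\alpha$ is nonempty precisely because \cref{def_ie} demands decay of order $(\log N)^{-(1+\epsilon)}$ with a strictly positive $\epsilon$; had only $(\log N)^{-1}$ been required, no such $\alpha$ would exist and this final step would break down. The van der Corput estimate, the sign argument for $u_n'-u_m'$, and the endpoint splitting are all routine in comparison.
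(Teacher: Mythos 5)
Your proof is correct and follows essentially the same route as the paper: bound $\|F_N\|_{L^2}^2$ via van der Corput (\cref{lemma_vdCestimate}), invoke $1$-\ie{}ness of $u_n'(a)$ and $u_n'(b)$ to get $(\log N)^{-(1+\epsilon)}$ decay, pass to the subsequence $N_j\approx\exp(j^\alpha)$ with $\alpha\in(1/(1+\epsilon),1)$ so the $L^2$ norms are summable, and interpolate between consecutive $N_j$ using $N_{j+1}/N_j\to1$ (which the paper packages as \cref{lemma_sublacunary}). A small bonus in your write-up: you explicitly check that monotonicity of $|u_n'-u_m'|$ forces $u_n'-u_m'$ itself to be monotone on $[a,b]$, a hypothesis \cref{lemma_vdCestimate} requires but which the paper's proof applies without comment.
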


Observe that, in view of \cref{lemma_weylsgrowthsuffice}, \cref{thm_betterkoksma} implies \cref{thm_weylreals} by taking $u_n(x)=a_nx$.

\begin{example}\label{example_koksmafejer}
    Let $u_n(x)=(\log n)^x$.
    Then for almost every $x>2$ the sequence $u_n(x)$ is u.d. mod 1.
    Indeed a standard computation shows that the sequence of functions $(u_n(x))_{n\in\N}$ satisfies the conditions of \cref{thm_betterkoksma}. Note that it does not satisfy the conditions of \cref{thm_koksmafull}.
\end{example}

For the specific $u_n(x)$ in \cref{example_koksmafejer}, one can simply invoke Fej\'er's theorem, which was formulated above after \cref{example_logcube}, and which implies that $u_n(x)$ is uniformly distributed for \emph{every} $x>1$. This conclusion is stronger than that in the example in two ways.
However, the following ``perturbation'' of \cref{example_koksmafejer} cannot be derived from Fej\'er's theorem.

\begin{example}
    Let $u_n(x)=(\log n+\tfrac1n\sin n)^x$.
    Note that $u_n'(x)=(\log n+\tfrac1n\sin n)^x\log(\log n+\tfrac1n\sin n)$.
    An argument similar to the one employed in \cref{example_logcube} shows that $(u_n(x))_{n\in\N}$ satisfies the Weyl growth condition (hence, by \cref{lemma_weylsgrowthsuffice}, it is \ie{}) for each $x>2$.
    Therefore, the sequence of functions $(u_n(x))_{n\in\N}$ satisfies the conditions of \cref{thm_betterkoksma} on $(2,\infty)$ and thus the sequence $(u_n(x))_{n\in\N}$ is u.d. for almost every $x>2$.

    We stress that the sequence $(u_n(x))_{n\in\N}$ does not satisfy the assumptions of \cref{thm_koksmafull} or Fej\'er's theorem (for any fixed $x>2$).
\end{example}

The proof of \cref{thm_betterkoksma} is similar to the proof of its special case, \cref{thm_koksmafull}; the only extra ingredient is the following elementary fact needed to make use of the \ie{} condition.

\begin{lemma}\label{lemma_sublacunary}
Let $z_n$ be a bounded sequence of complex numbers and let $(N_r)_{r\in\N}$ be an increasing sequence of natural numbers such that $N_{r+1}/N_r\to1$ as $r\to\infty$.
If the limit $\lim_{r\to\infty}\frac1{N_r}\sum_{n=1}^{N_r}z_n$ exists, then the limit $\lim_{N\to\infty}\frac1{N}\sum_{n=1}^{N}z_n$ also exists and coincides with the former limit.
\end{lemma}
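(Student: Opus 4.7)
The plan is a direct sandwiching argument: approximate the average along $\{1,\dots,N\}$ by the average along $\{1,\dots,N_{r(N)}\}$, where $r(N)$ is chosen so that $N_{r(N)} \le N < N_{r(N)+1}$, and use the hypothesis $N_{r+1}/N_r \to 1$ together with the boundedness of $(z_n)$ to show the two averages differ by $o(1)$.

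More precisely, let $M = \sup_n |z_n|$ and let $L$ denote the value of $\lim_{r\to\infty}\frac{1}{N_r}\sum_{n=1}^{N_r} z_n$. For every sufficiently large $N$ there is a unique $r = r(N)$ with $N_r \le N < N_{r+1}$, and clearly $r(N) \to \infty$ as $N \to \infty$. I would then decompose
\begin{equation*}
    \frac{1}{N}\sum_{n=1}^{N} z_n \;=\; \frac{N_r}{N}\cdot\frac{1}{N_r}\sum_{n=1}^{N_r} z_n \;+\; \frac{1}{N}\sum_{n=N_r+1}^{N} z_n.
\end{equation*}
For the first term, since $1 \le N/N_r < N_{r+1}/N_r \to 1$, the ratio $N_r/N$ tends to $1$, so the first term tends to $L$ by hypothesis. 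For the second term, boundedness of $(z_n)$ gives
\begin{equation*}
    \left|\frac{1}{N}\sum_{n=N_r+1}^{N} z_n\right| \;\le\; M\cdot\frac{N - N_r}{N} \;\le\; M\cdot\frac{N_{r+1}-N_r}{N_r} \;=\; M\left(\frac{N_{r+1}}{N_r}-1\right),
\end{equation*}
which tends to $0$. Combining the two gives $\lim_{N\to\infty}\frac{1}{N}\sum_{n=1}^{N} z_n = L$, as desired.

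There is no genuine obstacle here: the argument is a standard Cesàro-averaging manipulation, and the hypothesis $N_{r+1}/N_r \to 1$ is exactly what is needed both to identify the asymptotics of $N_r/N$ and to control the tail. The only minor care is to note that $r(N)\to\infty$, which is immediate because the sequence $(N_r)$ is strictly increasing and unbounded.
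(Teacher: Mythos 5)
Your proof is correct and follows essentially the same approach as the paper: both bound the discrepancy between $\frac1N\sum_{n\le N}z_n$ and $\frac1{N_r}\sum_{n\le N_r}z_n$ for $N_r\le N<N_{r+1}$ using boundedness of $(z_n)$ and the hypothesis $N_{r+1}/N_r\to1$. The paper packages this as a single inequality $\bigl|\frac1N\sum_{n\le N}z_n-\frac1M\sum_{n\le M}z_n\bigr|\le 2(1-N/M)$ for $N<M$, while you split the average into the $N_r/N$-rescaled partial average plus a tail, but the algebra and the estimates are interchangeable.
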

\begin{proof}
Assuming without loss of generality that $|z_n|\leq1$ for all $n$ we have for all $N<M$ that
$$\left|\frac1N\sum_{n=1}^Nz_n -\frac1M\sum_{n=1}^Mz_n \right|
=
\left|\sum_{n=1}^Nz_n\frac{M-N}{MN}+\frac1M\sum_{n=N+1}^Mz_n\right|
\leq
2\left(1-\frac NM\right).$$
Therefore, for any $N\in(N_r,N_{r+1})$ we have $\frac1{N}\sum_{n=1}^{N}z_n=\frac1{N_r}\sum_{n=1}^{N_r}z_n+o_{r\to\infty}(1)$.
\end{proof}

\begin{proof}[Proof of \cref{thm_betterkoksma}]
    Fix a non-zero $k\in\Z$ and let $F_N(x):=\frac1N\sum_{n=1}^Ne(k u_n(x))$.
    By Weyl criterion, if $F_N(x)\to0$ for every non-zero $k\in\Z$, then $\big(u_n(x)\big)_{n\in\N}$ is u.d. mod 1.
    Since $\Z$ is countable, it suffices to show that for a fixed $k$, $F_N(x)\to0$ as $N\to\infty$ for almost every $x\in[a,b]$.
    We have
    $$\|F_N\|^2_{L^2[a,b]}
    =
    \frac1{N^2}\sum_{n,m=1}^N\int_a^b e\Big(k\big(u_n(x)-u_m(x)\big)\Big)\d x.$$
    In view of \cref{lemma_vdCestimate},
    $$\left|\int_a^b e\Big(k\big(u_n(x)-u_m(x)\big)\Big)\d x\right|
    \leq
    \frac1{|k||u_n'(a)-u_m'(a)|}+\frac1{|k||u_n'(b)-u_m'(b)|}.$$
    Using the fact that $|e(t)|=1$ for all $t\in\R$ and the assumption that both sequences $n\mapsto u_n'(a)$ and $n\mapsto u_n'(b)$ are $1$-\ie, it follows that there exists $\epsilon>0$ such that
    $$\|F_N\|^2_{L^2[a,b]}
    \leq
    \frac2{(\log N)^{1+\epsilon}}.$$
    Letting $\delta>0$ small enough that $(1-\delta)(1+\epsilon)>1$ and setting $N_r=e^{r^{1-\delta}}$, we deduce that
    $$\sum_{r=1}^\infty\|F_{N_r}\|^2_{L^2[a,b]}
    \leq
    \sum_{r=1}^\infty\frac2{(\log N_r)^{1+\epsilon}}
    =
    \sum_{r=1}^\infty\frac2{r^{(1-\delta)(1+\epsilon)}}
    <
    \infty,$$
    which implies that for almost every $x\in[a,b]$ the sequence $F_{N_r}(x)\to0$.
    The conclusion now follows from \cref{lemma_sublacunary}.
\end{proof}

\subsection{Real-analytic versus $C^\infty$ functions}
\label{sec_weylfunctions}

Having established the relevance of \ie{} sequences, in this subsection we discuss smoothness constraints on a function $f$ in order for it to satisfy the following property.
\begin{equation}\label{eq_weylalongfunctions}
    \text{For every \ie{} sequence }a:\N\to\R,\text{ the sequence }\big(a(n)f(x)\big)_{n\in\N}\text{ is u.d. a.e.}
\end{equation}
\cref{thm_weylreals} states that the identity function $f:x\mapsto x$ satisfies \eqref{eq_weylalongfunctions}, and an immediate corollary is that the same is true whenever 
the pushforward measure\footnote{If $(X,\nu)$ is a measure space and $f:X\to Y$ is a measurable function, then the \emph{pushforward measure} is the measure $f_*\nu$ on $Y$ given by $f_*\nu(B)=\nu(f^{-1}(B))$ for every measurable set $B\subset Y$.} $f_*\Leb$ is absolutely continuous with respect to $\Leb$.
In particular, this is true whenever $f$ is a non-constant (real) analytic function:
\begin{proposition}\label{prop_analyticabscont}
  Let $f:\R\to\R$ be non-constant and real analytic. Then the pushforward measure $f_*\Leb$ is absolutely continuous with respect to $\Leb$ and hence $f$ satisfies \eqref{eq_weylalongfunctions}.
\end{proposition}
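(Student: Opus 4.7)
My plan is to first verify that the pushforward measure $f_*\Leb$ is absolutely continuous with respect to $\Leb$, and then use this, together with \cref{thm_betterkoksma} applied to linear functions, to obtain \eqref{eq_weylalongfunctions}.

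The first step is the observation that since $f$ is analytic and non-constant, $f'$ is analytic and not identically zero. Consequently, the critical set $Z := \{x \in \R : f'(x) = 0\}$ is a discrete subset of $\R$, hence at most countable and therefore $\Leb$-null. The complement $\R \setminus Z$ is an at most countable disjoint union of open intervals $I_j$, and on each such $I_j$, $f'$ has constant sign, so $f|_{I_j} : I_j \to f(I_j)$ is an analytic diffeomorphism.

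Next, to prove $f_*\Leb \ll \Leb$, I would take an arbitrary Borel $\Leb$-null set $B \subset \R$ and decompose
$$f^{-1}(B) = \big(f^{-1}(B) \cap Z\big) \;\cup\; \bigcup_j \big(f^{-1}(B) \cap I_j\big).$$
The first piece is contained in $Z$, hence null. For each $j$, the change of variables formula applied to the diffeomorphism $f|_{I_j}$ gives $\Leb(f^{-1}(B) \cap I_j) = 0$. Summing this countable collection of null sets yields $\Leb(f^{-1}(B)) = 0$, i.e.\ $f_*\Leb \ll \Leb$.

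Finally, to deduce \eqref{eq_weylalongfunctions}, I would fix an \ie{} sequence $a:\N\to\R$ and apply \cref{thm_betterkoksma} with $u_n(x) = a(n) x$ (whose derivatives $u_n'(\cdot) \equiv a(n)$ form \ie{} sequences by hypothesis). This produces a set $E \subset \R$ of full Lebesgue measure such that $(a(n) y)_{n \in \N}$ is u.d. mod 1 for every $y \in E$. The set on which $(a(n) f(x))_{n \in \N}$ is u.d. mod 1 then contains $f^{-1}(E)$, and the absolute continuity just established gives $\Leb\big(f^{-1}(\R \setminus E)\big) = f_*\Leb(\R \setminus E) = 0$, completing the argument. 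I do not anticipate any serious obstacle: the only ingredient truly specific to the analyticity hypothesis is the countability of the critical set, and this is precisely the feature whose failure for $C^\infty$ functions is exploited in the remark following \cref{thm_mainmultidimweyl3}; the rest is routine measure theory plus a direct application of the one-dimensional theory already developed in the section.
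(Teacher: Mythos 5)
Your proof is correct and follows essentially the same route as the paper: remove the (discrete, hence null) critical set $\{f'=0\}$, observe that $f$ is an analytic diffeomorphism on intervals away from it, and apply the change of variables formula to conclude that preimages of null sets are null. The only cosmetic difference is that the paper decomposes the complement of the critical set into countably many \emph{compact} intervals (so that $1/|f'|$ is bounded and the estimate reads as a simple $\max$), whereas you work directly on the open connected components; this is fine since the integral of $1/|f'(g(u))|$ over the null set $B\cap f(I_j)$ is zero regardless of whether the integrand is bounded. You also invoke \cref{thm_betterkoksma} for the one-dimensional input rather than \cref{thm_weylreals}, but since a \ie{} sequence is in particular $1$-\ie{}, both give the same full-measure set $E$ and the conclusion is identical.
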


\begin{proof}
    Suppose $A\subset\R$ has $\Leb(A)=0$; we need to show that $\Leb(f^{-1}(A))=0$.
    Since $f$ is analytic and non-constant, $\R$ is the union of the countable set $\{x:f'(x)=0\}$ and countable many compact intervals where $f'$ is non-zero. It suffices to prove that for any such interval $I\subset\R$, we have $\Leb(I\cap f^{-1}(A))=0$.

    Let $J=f(I)$.
    Since $f:I\to J$ is a bijection, it has an inverse $g:J\to I$.
    The change of variable formula now implies that
    \begin{eqnarray*}
        \Leb(I\cap f^{-1}(A))&=&\int_I 1_{f(x)\in A}\d x
    =
    \int_J1_{u\in A}\d u/|f'(g(u))|
    \\&\leq& \max_{x\in I}\frac1{|f'(x)|}\int_J1_{u\in A}\d u
    =
    \max_{x\in I}\frac1{|f'(x)|}\Leb(J\cap A)=0.
    \end{eqnarray*}
\end{proof}

Observe that if $f$ is a constant function, then it does not satisfy \eqref{eq_weylalongfunctions}.
More generally, if there exists a level set $f^{-1}(\{y\}):=\{x\in\R:f(x)=y\}$ with positive measure for some $y\in\R$, then choosing $a$ to be any injective sequence taking values in $\{m\in\N:\|my\|_\T<1/3\}$ we see that there is a positive measure set of $x\in\R$ for which the sequence $\big(a(n)f(x)\big)_{n\in\N}$ is not uniformly distributed mod 1.
One might hope that this is the only obstruction, and any smooth function $f:\R\to\R$ whose level sets have measure zero satisfy $f_*\Leb\ll\Leb$ and hence $f$ satisfies \eqref{eq_weylalongfunctions}.
However, this is not the case: an example by Yuval Peres answering a question of the authors on Math StackExchange \cite{Peres23} produces a function $f\in C^\infty(\R)$  with $\Leb(f^{-1}(y))=0$ for all $y\in\R$ and such that $\Leb(f^{-1}(\Lambda))>0$ where $\Lambda$ is a Cantor set on $\R$ with $\Leb(\Lambda)=0$.

In view of Peres' example and \cref{prop_analyticabscont}, in this paper all functions are assumed to be real analytic.

\subsection{A multidimensional Weyl theorem along polynomial curves}
\label{sec_surveylast}
We conclude this section by demonstrating how a small variation of Weyl's proof already suffices to establish \cref{thm_muldidimpolWeyl2} which involves polynomial curves and uniform distribution in the multidimensional setting.

The presence of the scalar product in Weyl's criterion for multidimensional sequences (\cref{thm_weylcriterion}) leads us to consider the following notion for $k$-tuples of sequences.
\begin{definition}[Jointly \ie{} sequences]\label{def_jointie}
Given sequences $a_1,\dots,a_k:\N\to\R$, we say that they are \emph{jointly \ie} if for every non-zero $v=(v_1,\dots,v_k)\in\R^k$ the linear combination $v_1a_1+\cdots+v_ka_k$ is \ie.
\end{definition}

For the proof of \cref{thm_muldidimpolWeyl2}, and its extension \cref{thm_polKoksmaWeyl}, we will require an extension of van der Corput's \cref{lemma_vdCestimate} involving higher derivatives.
\begin{theorem}
    [{\cite[Lemma 2.2, page 16]{Kuipers_Niederreiter74}},{\cite[Proposition 2, Chapter VIII]{Stein93}}]\label{lemma_highordervdC}
Let $a<b$ be real, let $d\in\N$, $d\geq2$ and let $f\in C^{d}[a,b]$ be real valued.
Then
$$\left|\int_a^be\big(f(x)\big)\d x\right|\leq C_d\sup\Big\{\big|f^{(d)}(x)\big|^{-1/d}:x\in[a,b]\Big\},$$
where the constant $C_d$ depends only on $d$.
\end{theorem}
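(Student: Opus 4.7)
The plan is to establish this by induction on $d$, using \cref{lemma_vdCestimate} as the base case. The guiding idea, common to both the base case and the inductive step, is to split $[a,b]$ according to whether the next-to-top available derivative (namely $f^{(d-1)}$) is smaller than some threshold $\delta$, treat the small region by a trivial length estimate, and treat its complement either by \cref{lemma_vdCestimate} or by the inductive hypothesis, before optimizing over $\delta$.

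Write $\lambda := \inf_{x \in [a,b]} |f^{(d)}(x)|$; one may assume $\lambda > 0$ since otherwise the claimed right-hand side is infinite. Since $f^{(d)}$ is continuous and non-vanishing, it has constant sign on $[a,b]$, and thus $f^{(d-1)}$ is strictly monotone with $|(f^{(d-1)})'| \geq \lambda$. Consequently, for any $\delta>0$ the set $E_\delta := \{x \in [a,b] : |f^{(d-1)}(x)| \leq \delta\}$ is an interval of length at most $2\delta/\lambda$, and $[a,b] \setminus E_\delta$ consists of at most two subintervals on each of which $|f^{(d-1)}| \geq \delta$.

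For the base case $d=2$, applying \cref{lemma_vdCestimate} on each of the (at most two) components of $[a,b]\setminus E_\delta$ (where $f'$ is monotone and $|f'|\geq\delta$) produces a bound $\ll 1/\delta$, so altogether $|\int_a^b e(f(x))\d x|\ll \delta/\lambda + 1/\delta$; choosing $\delta = \sqrt{\lambda}$ gives the desired $\lambda^{-1/2}$ bound. For the inductive step the same splitting applies, but on each component of $[a,b] \setminus E_\delta$ one now invokes the inductive hypothesis for $d-1$ applied to $f$ restricted there (applicable since $f \in C^{d-1}$ is automatic and $|f^{(d-1)}| \geq \delta$ by construction) to obtain a bound $\ll \delta^{-1/(d-1)}$. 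Balancing $\delta/\lambda$ against $\delta^{-1/(d-1)}$ by taking $\delta = \lambda^{(d-1)/d}$ yields $\lambda^{-1/d}$, with a constant $C_d$ depending only on $d$ through the accumulated optimization factors.

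The proof is essentially routine. The only point requiring slight care is the initial observation that $|f^{(d)}| \geq \lambda > 0$ forces $f^{(d)}$ to have constant sign (by continuity and the intermediate value theorem) and thus $f^{(d-1)}$ to be monotone, which is precisely what makes $E_\delta$ a single interval and amenable to a length estimate. The main (rather mild) obstacle is to verify that the constants $C_d$ produced by the recursion $C_d \ll C_{d-1}^{(d-1)/d}$ stay finite for each $d$, which follows by a trivial induction.
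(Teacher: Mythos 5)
The paper cites this lemma from the literature (\cite{Kuipers_Niederreiter74}, \cite{Stein93}) without giving a proof of its own, so there is no internal argument to compare against; your proof is correct and is, up to cosmetic reformulation, the standard inductive argument from Stein's Proposition 2 (Chapter VIII). Stein's version excises a $\delta$-neighbourhood of the point where $|f^{(d-1)}|$ is minimal and applies the inductive hypothesis on the complement, whereas you excise the sublevel set $\{|f^{(d-1)}|\le\delta\}$ directly; since $f^{(d-1)}$ is strictly monotone these produce the same decomposition. All the load-bearing verifications are present: $|f^{(d)}|\ge\lambda>0$ with continuity forces a constant sign, hence strict monotonicity of $f^{(d-1)}$, so $E_\delta$ is a single interval of length at most $2\delta/\lambda$ by the mean value theorem; the base case $d=2$ invokes \cref{lemma_vdCestimate} on the at most two components of the complement (where $f'$ is monotone and $|f'|\ge\delta$); and balancing $\delta/\lambda$ against $\delta^{-1/(d-1)}$ gives $\lambda^{-1/d}$ with a recursion for $C_d$ that is clearly finite for each $d$.
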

Strictly speaking, \cref{lemma_highordervdC} does not contain \cref{lemma_vdCestimate} as a special case, since it requires $d\geq2$. In the $d=1$ case, one needs the additional assumption that $f'$ is monotone; this assumption is unnecessary in \cref{lemma_highordervdC} because the inequality holds trivially if $f^{(d)}$ has a zero, and otherwise there is a bounded number of sub-intervals of $[a,b]$ where $f'$ is monotone.

We are now ready to prove \cref{thm_muldidimpolWeyl2} which we restate here for the convenience of the reader.
\begin{named}{\cref{thm_muldidimpolWeyl2}}{}
    Let $k\in\N$, let $p_1,\cdots, p_k\in\R[x]$ be non-constant polynomials and let $a_1,\dots,a_k:\N\to\R$ be jointly \ie.
    Then for Lebesgue-a.e.~$x\in\R$, the sequence $\big(a_1(n)p_1(x),\dots,a_k(n)p_k(x)\big)_{n\in\N}$ is uniformly distributed on $\T^k$.
\end{named}

\begin{proof}
    Let $k\in\N$, let $p_1,\dots,p_k\in\R[x]$ be non-constant polynomials and let $a_1,\dots,a_k:\N\to\R$ be jointly \ie{} sequences. Our goal is to show that for almost every $x\in\R$ the sequence $\big(a_1(n)p_1(x),\dots,a_k(n)p_k(x)\big)_{n\in\N}$ is uniformly distributed on $\T^k$.

    Using \cref{thm_weylcriterion} if suffices to show that for each fixed non-zero vector $v=(v_1,\dots,v_k)\in\Z^k$ the sequence of functions $F_N:\R\to\C$ defined by
    $$F_N(x)=\frac1N\sum_{n=1}^Ne\big(v_1a_1(n)p_1(x)+\cdots+v_ka_k(n)p_k(x)\big)$$
    tend to zero almost everywhere on $\R$ as $N\to\infty$.
    Since $\R$ is the countable union of (not necessarily disjoint) compact intervals, it suffices to establish almost everywhere convergence to $0$ on each such interval.

    Let $d\in\N$ be the maximal degree among the polynomials $\{p_i:v_i\neq0\}$.
    Since $v$ is non-zero, $d$ is well defined.
    For a fixed $n\in\N$, the function $\phi_n:x\mapsto v_1a_1(n)p_1(x)+\cdots+v_ka_k(n)p_k(x)$ is a polynomial of degree at most $d$, and the coefficient $b(n)$ of the term $x^d$ in $\phi_n$ can be expressed as a non-zero linear combination of the sequences $a_1(n),\dots,a_k(n)$.
    Since these sequences are jointly \ie, the sequence $b(n)$ is \ie.
    The $d$-th derivative of $\phi_n$ is the constant function $d!b(n)$, and in particular it is also a \ie{} sequence.

    Let $I\subset\R$ be an arbitrary compact interval.
    To show that $F_N\to0$ almost everywhere on $I$, we consider the $L^2$ norm
    $$\big\|F_N\big\|_{L^2(I)}^2
    =
    \int_I\left|\frac1N\sum_{n=1}^Ne\big(\phi_n(x)\big)\right|^2\d x
    =
    \frac1{N^2}\sum_{n,m=1}^N\int_I
    e\Big(\phi_n(x)-\phi_m(x)\Big)\d x.$$
Using \cref{lemma_highordervdC}, and \eqref{eq_iedef} for the sequence $b(n)$, we can estimate
$$\big\|F_N\big\|_{L^2(I)}^2
\leq
C_d\frac1{N^2}\sum_{n,m=1}^N\min\Big(\big|b(n)-b(m)\big|^{-1/d},|I|\Big)\leq\frac1{(\log N)^{1+\epsilon}}$$
for some $\epsilon>0$ and every sufficiently large $N$ (depending only on $d$ and $|I|$).

To finish the proof, we use the same argument as in the proof of \cref{thm_betterkoksma}.
    Let $\epsilon'>0$ be sufficiently small so that $(1-\epsilon')(1+\epsilon)>1$.
    Setting $N_r:=e^{r^{1-\epsilon'}}$, we deduce that
    $$\sum_{r=1}^\infty\|F_{N_r}\|^2_{L^2(I)}
    \leq
    \sum_{r=1}^\infty\frac1{(\log N_r)^{1+\epsilon}}
    =
    \sum_{r=1}^\infty\frac1{r^{(1-\epsilon')(1+\epsilon)}}
    <
    \infty,$$
    which implies that the function $x\mapsto\sum_{r=1}^\infty\big|F_{N_r}(x)\big|^2$ has a finite integral and hence is finite for almost every $x\in I$.
    In particular, for almost every $x\in I$ the sequence $F_{N_r}(x)\to0$.
    The conclusion now follows from \cref{lemma_sublacunary}.
\end{proof}

\begin{remark}
  The proof of \cref{thm_muldidimpolWeyl2} can be adapted to handle the case where the sequences $a_1,\dots,a_k$ are \ie{} (but not necessarily jointly \ie) and the polynomials $p_1,\dots,p_k$ have distinct degrees.
However, to prove \cref{thm_muldidimpolWeyl1} in full generality, we need a different method.
To clarify this point, consider the sequence $\big((n+\log n)x^2,n(x^2+x)\big)$, corresponding to
$$a_1(n)=n+\log n,\qquad a_2(n)=n,\qquad p_1(x)=x^2,\qquad p_2(x)=x^2+x.$$
When $v=(1,-1)$, in the proof above, we get $\phi_n(x)=(\log n)x^2-nx$.
Although this sequence \emph{is} uniformly distributed mod 1 for almost all $x\in\R$, this cannot be ascertained by differentiating $\phi_n$ twice, because the sequence $(\log n)x^2$, which has the same second derivative, is not uniformly distributed for any $x\in\R$.

\end{remark}

In the next section we present a proof of \cref{thm_mainmultidimweyl1} which contains \cref{thm_muldidimpolWeyl1} as a special case.

\section{Multidimensional Weyl theorem along analytic curves}
\label{sec_proofofmainthm}

The goal of this section is to prove \cref{thm_mainmultidimweyl3}.

\subsection{The case of linearly independent functions}

First we deal with the case of \cref{thm_mainmultidimweyl3} which involves linearly independent functions.
In this case we obtain the following ``if and only if'' statement which is of interest on its own.

\begin{theorem}\label{thm_mainmultidimweyl1}
        Let $k\in\N$ and let $f_1,\dots,f_k:\R\to\R$ be analytic functions. Then the following are equivalent:
        \begin{enumerate}
            \item  The set $\{1,f_1,\dots,f_k\}$ is linearly independent over $\R$.
            \item For every \ie{} sequences $a_1,\dots,a_k:\N\to\R$, there exists a full measure set of $x\in\R$ for which the sequence $\big(a_1(n)f_1(x),\dots,a_k(n)f_k(x)\big)_{n\in\N}$ is uniformly distributed on $\T^k$.
        \end{enumerate}
    \end{theorem}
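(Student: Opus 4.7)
My plan is to prove the biconditional in two parts: the implication $(2)\Rightarrow(1)$ by an explicit construction and the main implication $(1)\Rightarrow(2)$ by the $L^2$-moment method used in the proof of \cref{thm_muldidimpolWeyl2}, with \cref{thm_wright} playing the role of the higher-order van der Corput estimate. For the contrapositive of the easy direction, suppose $\{1,f_1,\dots,f_k\}$ is $\R$-linearly dependent, so there exist reals $c_0,c_1,\dots,c_k$ not all zero with $c_0+\sum_{i=1}^k c_if_i\equiv 0$; necessarily some $c_i$ with $i\geq 1$ is nonzero. Set $b(n):=n$ if $c_0=0$ and $b(n):=-n/c_0$ otherwise, so $b$ is \ie{} and $-c_0\,b(n)\in\Z$; define $a_i(n):=c_ib(n)$ for each $i$ with $c_i\neq 0$ (still \ie{} as a nonzero linear rescaling of $b$) and let $a_i$ be any \ie{} sequence when $c_i=0$. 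Taking $v\in\Z^k$ with $v_i=1$ when $c_i\neq 0$ and $v_i=0$ otherwise yields $v\neq 0$ and
$$\sum_{i=1}^k v_ia_i(n)f_i(x)\;=\;b(n)\sum_{i:\,c_i\neq 0}c_if_i(x)\;=\;-c_0\,b(n)\in\Z,$$
independent of $x$; the corresponding Weyl exponential sum is thus identically $1$, \cref{thm_weylcriterion} fails at every $x\in\R$, and (2) is violated.

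For the main direction $(1)\Rightarrow(2)$, fix \ie{} sequences $a_1,\dots,a_k$, a nonzero $v\in\Z^k$, and a compact interval $I\subset\R$; choose any $i_0$ with $v_{i_0}\neq 0$ and let $\delta\in(0,1]$ be no larger than the exponent furnished by \cref{thm_wright}. With $\phi_n(x):=\sum_{i=1}^k v_ia_i(n)f_i(x)$ and $F_N(x):=\frac1N\sum_{n=1}^N e(\phi_n(x))$, expanding $\|F_N\|_{L^2(I)}^2$ reduces the problem to bounding, for each pair $(n,m)$, the integral $\int_I e\big(\sum_i v_i(a_i(n)-a_i(m))f_i(x)\big)\d x$. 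Writing $\lambda(n,m)$ for the vector of coefficients, the elementary lower bound $\|\lambda(n,m)\|\geq|v_{i_0}|\,|a_{i_0}(n)-a_{i_0}(m)|$ combined with \cref{thm_wright} and the trivial bound $|I|$ produces a constant $C'$ depending only on $I$, $v$, and the $f_i$ such that, with the convention $1/0=\infty$,
$$\left|\int_I e\Big(\sum_i v_i(a_i(n)-a_i(m))f_i(x)\Big)\d x\right|\;\leq\;C'\min\big(|a_{i_0}(n)-a_{i_0}(m)|^{-\delta},\,1\big),$$
uniformly in $(n,m)$---the degenerate case $\lambda(n,m)=0$, which forces $a_{i_0}(n)=a_{i_0}(m)$, being absorbed through the $\min$. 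Summing over $(n,m)$ and invoking the $\delta$-\ie{} property of $a_{i_0}$ (\cref{def_ie}) then yields $\|F_N\|_{L^2(I)}^2=O\big((\log N)^{-(1+\epsilon)}\big)$ for some $\epsilon>0$.

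The endgame mirrors the one at the close of the proof of \cref{thm_muldidimpolWeyl2}: sampling along $N_r:=e^{r^{1-\epsilon'}}$ for a sufficiently small $\epsilon'>0$ makes $\sum_r\|F_{N_r}\|_{L^2(I)}^2$ summable, so $F_{N_r}\to 0$ almost everywhere on $I$, and \cref{lemma_sublacunary} upgrades this to $F_N\to 0$ almost everywhere; exhausting $\R$ by countably many compact intervals and intersecting the resulting full-measure sets over $v\in\Z^k\setminus\{0\}$, Weyl's criterion delivers (2). The one genuinely new ingredient, and what I expect to be the main obstacle compared to the polynomial case, is the oscillatory integral estimate \cref{thm_wright}, whose proof (given in the appendix by Wright) is where the linear independence of $\{1,f_1,\dots,f_k\}$ actually enters; once that bound is granted, the remainder uses only the \ie{} property of the single coordinate sequence $a_{i_0}$ and is a direct variant of the template already used in \cref{sec_expository}.
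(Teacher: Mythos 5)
Your proposal is correct and follows essentially the same route as the paper: the $(2)\Rightarrow(1)$ direction is an explicit construction of \ie{} sequences from a nontrivial dependence (the paper phrases the dependence via the constant value $\alpha$ of $\sum c_if_i$ while you use $c_0=-\alpha$, but these are the same), and $(1)\Rightarrow(2)$ is the identical $L^2$-moment argument driven by \cref{thm_wright}, a lower bound on $\|\lambda_{n,m}\|$ via a single nonzero coordinate $v_{i_0}$, the $\delta$-\ie{} property of $a_{i_0}$, sampling along $N_r=e^{r^{1-\epsilon'}}$, and \cref{lemma_sublacunary}. Your explicit handling of the degenerate pairs $\lambda(n,m)=0$ and the caveat that $\delta$ may need to be lowered to lie in $(0,1]$ are harmless refinements of details the paper treats implicitly.
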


        \begin{proof}

        First we prove the implication (2)$\Rightarrow$(1).
        Suppose $c_1,\dots,c_k\in\R$ are not all zero but the linear combination $c_1f_1+\cdots+c_kf_k$ is a constant function with value $\alpha\in\R$.
        Then, taking
        $$a_i(n)=\begin{cases}
            n&\text{ if }c_i=0\\
            c_in&\text{ if }c_i\neq0\text{ and }\alpha=0\\
            \tfrac{c_i}\alpha n&\text{ if }c_i\neq0\text{ and }\alpha\neq0,
        \end{cases}$$
        which are all \ie{} sequences, we claim that the sequence $\big(a_1(n)f_1(x),\dots,a_k(n)f_k(x)\big)_{n\in\N}$ is not uniformly distributed on $\T^k$ for every $x\in\R$.

        To verify the claim, let $v_i=1_{c_i\neq0}$ for each $i=1,\dots,k$ and note that
        $$v_1a_1(n)f_1(x)+\cdots+v_ka_k(n)f_k(x)=\begin{cases}
            0&\text{ if }\alpha=0\\
            n&\text{otherwise},
        \end{cases}$$
        for every $x\in\R$ and $n\in\N$.
        It follows that
        $$\frac1N\sum_{n=1}^Ne\Big(v_1a_1(n)f_1(x)+\cdots+v_ka_k(n)f_k(x)\Big)=1\neq0,$$
        so the Weyl criterion (\cref{thm_weylcriterion}) implies that $\big(a_1(n)f_1(x),\dots,a_k(n)f_k(x)\big)_{n\in\N}$ is not uniformly distributed on $\T^k$, verifying the claim and finishing the proof of the implication (2)$\Rightarrow$(1).

        Next we prove the converse implication (1)$\Rightarrow$(2).
            Let $k\in\N$ and $f_1,\dots,f_k:\R\to\R$ be analytic functions such that $\{1,f_1,\dots,f_k\}$ are linearly independent over $\R$ and fix \ie{} sequences $a_1,\dots,a_k$.
            By the Weyl criterion, for any fixed $x\in\R$ the sequence $(a_1(n)f_1(x),\dots,a_k(n)f_k(x))_{n\in\N}$ is u.d. on $\T^k$ if and only if for every $v=(v_1,\dots,v_k)\in\Z^k\setminus\{\vec0\}$,
            \begin{equation}
                \label{eq_proofLeveque1}
                \lim_{N\to\infty}\frac1N\sum_{n=1}^Ne\big(v_1a_1(n)f_1(x)+\cdots+v_ka_k(n)f_k(x)\big)=0.
            \end{equation}
            Since $\Z^k$ is countable, it suffices to show that for any given $v\in\Z^k\setminus\{\vec0\}$, \eqref{eq_proofLeveque1} holds for almost every $x\in\R$.
            Fix from now on such $v$ and denote by $\phi_n(x)=v_1a_1(n)f_1(x)+\cdots+v_ka_k(n)f_k(x)$.

            We seek to prove that the sequence $\frac1N\sum_{n=1}^Ne\big(\phi_n(x)\big)$ converges to $0$ as $N\to\infty$ almost everywhere in $\R$.
            Since $\R$ is a countable union of (not necessarily disjoint) compact intervals, it suffices to show that for each fixed compact interval $I\subset\R$, the functions $F_N:I\to\C$ given by $F_N(x)=\frac1N\sum_{n=1}^Ne\big(\phi_n(x)\big)$ converge to $0$ almost everywhere as $N\to\infty$.
            We fix such an interval $I$ from now on.

            We claim that there exists $\epsilon>0$ such that
            \begin{equation}\label{eq_proofLeveque2}
                \|F_N\|_{L^2(I)}^2\ll\frac1{\log(N)^{1+\epsilon}}.
            \end{equation}
            Assuming for now that the claim is true, for each $r\in\N$ let $N_r\in\N$ be the smallest integer satisfying $\log N_r>r^{1-\epsilon/2}$.
            Note that $N_{r+1}/N_r\to1$ as $r\to\infty$ and $\log(N_r)^{1+\epsilon}\geq r^{1+\epsilon/4}$.
            It follows from \eqref{eq_proofLeveque2} and the monotone convergence theorem that $S(x):=\sum_{r=1}^\infty|F_{N_r}(x)|^2$ has $\|S\|_{L^1(I)}<\infty$ and hence $S(x)<\infty$ for almost every $x\in I$.
            This implies that $F_{N_r}(x)\to0$ for almost every $x\in I$, and in view of \cref{lemma_sublacunary}, it then implies that $F_N(x)\to0$ for almost every $x\in I$.
            This is what we wanted to show, so we are left to establish the claim \eqref{eq_proofLeveque2}.

            Using \cref{thm_wright}, let $C,\delta>0$ be such that \eqref{eq_wrightestimate} holds.
            For each $n,m\in\N$ let $\lambda_{n,m}\in\R^k$ be the vector
            $$\lambda_{n,m}=\Big(v_1\big(a_1(n)-a_1(m)\big),\dots,v_k\big(a_k(n)-a_k(m)\big)\Big).$$
            Since $v$ is non-zero, there exists $j\in\{1,\dots,k\}$ such that $v_j\neq0$.
            It follows that for every $n,m\in\N$, $\|\lambda_{n,m}\|\geq|v_j|\cdot\big|a_j(n)-a_j(m)\big|$ and hence
            \begin{equation}\label{eq_proofLeveque3}
                \|\lambda_{n,m}\|^{-\delta}\leq|v_j|^{-\delta}\cdot\big|a_j(n)-a_j(m)\big|^{-\delta}.
            \end{equation}

            We rewrite the left hand side of \eqref{eq_proofLeveque2} as
            \begin{equation}
                \label{eq_proofLeveque4.1}
                \|F_N\|^2_{L^2(I)}
            =
            \frac1{N^2}\sum_{1\leq n,m\leq N}\int_Ie\left(\sum_{i=1}^k\lambda_{n,m;i}f_i(x)\right)\d x.
            \end{equation}
            For each $n,m$, the integrals in the right-hand side of \eqref{eq_proofLeveque4.1} are bounded by $|I|$, so combining \eqref{eq_wrightestimate} with \eqref{eq_proofLeveque3} and then using the fact that $a_j$ is \ie{} via \eqref{eq_iedef}, we conclude that there exists $\epsilon>0$ such that

             $$
            \|F_N\|^2_{L^2(I)}\leq C|v_j|^{-\delta}\frac1{N^2}\sum_{1\leq n,m\leq N}\min\Big(|I|,\big|a_j(n)-a_j(m)\big|^{-\delta}\Big)
            \ll
            \frac 1{(\log N)^{1+\epsilon}}
            $$
            establishing \eqref{eq_proofLeveque2} and finishing the proof.
        \end{proof}

\subsection{The case of jointly \ie{} sequences}
In this subsection we prove the case of \cref{thm_mainmultidimweyl3} where no independence assumption on the $f_i$ is made, but we require the sequences $a_1,\dots,a_k$ to be jointly \ie; this is the content of the following theorem.

    \begin{theorem}
    \label{thm_mainmultidimweyl2}
    Let $k\in\N$, let $f_1,\dots,f_k:\R\to\R$ be non-constant analytic functions.
        Then whenever $a_1,\dots,a_k:\N\to\R$ are jointly \ie{}, for Lebesgue-a.e.~$x\in\R$, the sequence $\big(a_1(n)f_1(x),\dots,a_k(n)f_k(x)\big)_{n\in\N}$ is uniformly distributed on $\T^k$.
\end{theorem}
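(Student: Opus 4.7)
The plan is to adapt the proof of \cref{thm_mainmultidimweyl1} by first projecting the problem onto a system of functions that is linearly independent together with the constant function $1$, so that \cref{thm_wright} becomes applicable. Fix a nonzero $v=(v_1,\dots,v_k)\in\Z^k$; by Weyl's criterion and the standard reductions used in the proof of \cref{thm_mainmultidimweyl1}, it suffices to show that on every compact interval $I\subset\R$, the function $F_N(x):=\frac{1}{N}\sum_{n=1}^N e(\phi_n(x))$ with $\phi_n(x)=\sum_{i=1}^k v_ia_i(n)f_i(x)$ tends to $0$ a.e. Let $S=\{i:v_i\neq 0\}$, and set $V=\mathrm{span}_\R\{1,f_i:i\in S\}$. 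Since each $f_i$ is non-constant, $V$ has dimension $d+1$ with $d\geq 1$, so I can fix a basis of $V$ of the form $\{1,g_1,\dots,g_d\}$ in which each $g_j$ is an analytic function. Writing $f_i=\alpha_i+\sum_{j=1}^d\beta_{ij}g_j$ for $i\in S$, I obtain
$$\phi_n(x) = A(n) + \sum_{j=1}^d B_j(n)\,g_j(x),\quad A(n)=\sum_{i\in S}v_i\alpha_i a_i(n),\quad B_j(n)=\sum_{i\in S}v_i\beta_{ij}\,a_i(n).$$

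The key step is to show that the sequences $B_1,\dots,B_d$ are jointly \ie{}. Given a nonzero $w\in\R^d$, a direct computation gives $\sum_j w_jB_j(n)=\sum_{i\in S}c_i a_i(n)$, with $c_i:=v_i\sum_j w_j\beta_{ij}$. If the vector $(c_i)_{i\in S}$ were zero, then since $v_i\neq 0$ on $S$, I would have $\sum_j w_j\beta_{ij}=0$ for every $i\in S$. However, each row $(\beta_{i1},\dots,\beta_{id})$ encodes the function $f_i-\alpha_i$ in the basis $(g_j)$, and $\{f_i-\alpha_i:i\in S\}$ spans the $d$-dimensional quotient $V/\R$; hence the matrix $(\beta_{ij})_{i\in S,\,1\leq j\leq d}$ has row rank (and thus column rank) equal to $d$, forcing $w=0$, a contradiction. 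Therefore $(c_i)_{i\in S}$ is nonzero, and since the $a_i$ are jointly \ie{}, the sequence $\sum_j w_jB_j$ is \ie{}, establishing the claim.

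With this in hand, the remainder of the argument mirrors the proof of \cref{thm_mainmultidimweyl1}, applied to $(g_1,\dots,g_d)$ and $(B_1,\dots,B_d)$. Since $\{1,g_1,\dots,g_d\}$ is linearly independent over $\R$, \cref{thm_wright} supplies constants $C,\delta>0$ with $\big|\int_I e(\sum_j\lambda_j g_j(x))\d x\big|\leq C\|\lambda\|^{-\delta}$ for all nonzero $\lambda\in\R^d$. Applied with $\lambda=B(n)-B(m)$, and using that $|e(A(n)-A(m))|=1$ together with $\|\lambda\|\geq|B_1(n)-B_1(m)|$,
$$\left|\int_I e(\phi_n(x)-\phi_m(x))\d x\right|\leq\min\big(|I|,\ C|B_1(n)-B_1(m)|^{-\delta}\big).$$
Since $B_1$ is \ie{}, averaging over $1\leq n,m\leq N$ yields $\|F_N\|^2_{L^2(I)}\ll(\log N)^{-(1+\epsilon)}$ for some $\epsilon>0$, and the subsequence argument at the end of the proof of \cref{thm_mainmultidimweyl1}, together with \cref{lemma_sublacunary}, delivers $F_N\to 0$ a.e. on $I$.

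The main obstacle is the rank argument showing that the $B_j$ are jointly \ie{}, which is precisely what allows Wright's estimate to be combined with an \ie{} sequence (namely $B_1$) and thus produces the necessary $L^2$ decay. Once this reduction to the linearly independent system $\{1,g_1,\dots,g_d\}$ is in place, the remaining analytic estimates are a faithful transcription of the argument proving \cref{thm_mainmultidimweyl1}.
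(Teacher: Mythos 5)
Your proposal is correct and follows essentially the same route as the paper's proof: reduce to a linearly independent system by changing basis, apply Theorem~\ref{thm_wright} to the reduced system, and exploit scatteredness of one of the resulting coefficient sequences to obtain the $(\log N)^{-(1+\epsilon)}$ decay of $\|F_N\|_{L^2(I)}^2$. The only meaningful difference is in the middle step: the paper directly exhibits a single index $j$ with $b_j$ scattered (pick $i$ with $v_i\neq0$, then $j$ with $u_{i,j}\neq 0$, and invoke joint scatteredness of the $a_i$ for that one coefficient vector), whereas you prove the strictly stronger claim that $B_1,\dots,B_d$ are \emph{jointly} scattered via a rank argument on the matrix $(\beta_{ij})$, and then specialize to $B_1$. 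Your rank argument is sound (full column rank is forced because $\{\bar f_i:i\in S\}$ spans the $d$-dimensional quotient $V/\R$), and it does guarantee every column is nonzero, hence every $B_j$ is scattered; but this extra generality is not used, so the paper's one-index argument is more economical. Both approaches are valid and yield the identical $L^2$ estimate and the same subsequence/Lemma~\ref{lemma_sublacunary} conclusion.
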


\begin{proof}

    We begin as in the proof of \cref{thm_mainmultidimweyl1}: after invoking Weyl's criterion \cref{thm_weylcriterion} it suffices to prove that for a fixed $v\in\Z^k\setminus\{\vec0\}$ and a fixed compact interval $I\subset\R$, the functions $F_N:I\to\C$ given by
    $$F_N(x):=\frac1N\sum_{n=1}^Ne\big(v_1a_1(n)f_1(x)+\cdots+v_ka_k(n)f_k(x)\big)$$
    converge to $0$ almost everywhere on $I$.
    Arguing still exactly as in the proof \cref{thm_mainmultidimweyl1}, we use \cref{lemma_sublacunary} to further reduce matters to finding $\epsilon>0$ such that
            \begin{equation}\label{eq_proofLeveque21}
                \|F_N\|_{L^2(I)}^2\ll\frac1{\log(N)^{1+\epsilon}}.
            \end{equation}
    The rest of the proof is dedicated to establishing \eqref{eq_proofLeveque21}.

    For ease of notation set $f_0\equiv1$.
    Reordering the $f_i$ we may assume that there exists $d\in\{1,\dots,k\}$ such that the functions $\{f_0,\dots,f_d\}$ are linearly independent, and for each $i\in\{d+1,\dots,k\}$ we have
    $$f_i=u_{i,0}f_0+\cdots+u_{i,d}f_d$$ for some real coefficients $u_{i,j}\in\R$.
    We also set $u_{i,j}=1_{i=j}$ for $i\in\{1,\dots,d\}$.
    Let
    $$\phi_n(x)
    =
    \sum_{i=1}^kv_ia_i(n)f_i(x)
    =
    \sum_{i=1}^kv_ia_i(n)\sum_{j=0}^du_{i,j}f_j(x)
    =
    \sum_{j=0}^db_j(n)f_j(x),$$
    where for each $j\in\{0,\dots,d\}$ we let $b_j(n)=v_1a_1(n)u_{1,j}+\cdots+v_ka_k(n)u_{k,j}$.
    We claim that there exists $j\in\{1,\dots,d\}$ for which $b_{j}$ is \ie.
    Since $v\neq\vec0$, there exists some $i\in\{1,\dots,k\}$ such that $v_i\neq0$, and because $f_i$ is not constant, there exists some $j\in\{1,\dots,d\}$ such that $u_{i,j}\neq0$.
    Since the sequences $a_1,\dots,a_k$ are jointly \ie{} and $v_iu_{i,j}\neq0$, the sequence $b_{j}$ is \ie.
    We now use \cref{thm_wright} with $f_1,\dots,f_d$ to find $C,\delta>0$ such that for every $\lambda\in\R^d$
    \begin{equation}\label{eq_proof_thm_mainmultidimweyl2.1}
        \left|\int_Ie\big(\lambda_1f_1(x)+\cdots+\lambda_df_d(x)\big)\d x\right|\leq C\|\lambda\|^{-\delta}.
    \end{equation}
    For each $n,m\in\N$ let $\lambda_{n,m}\in\R^d$ be the vector
    $$\lambda_{n,m}=\big(b_1(n)-b_1(m),\dots,b_d(n)-b_d(m)\big).$$
    Note that $\|\lambda_{n,m}\|\geq\big|b_{j}(n)-b_j(m)\big|$ and hence
    \begin{equation}\label{eq_lambdamnestimate2}
        \|\lambda_{n,m}\|^{-\delta}\leq\big|b_j(n)-b_j(m)\big|^{-\delta}.
    \end{equation}

     Expanding the square we rewrite the left hand side of \eqref{eq_proofLeveque21} as
            \begin{eqnarray*}
                \|F_N\|^2_{L^2(I)}
            &=&\left\|\frac1N\sum_{n=1}^Ne\big(\phi_n(x)\big)\right\|^2_{L^2(I)}
            =
            \frac1{N^2}\sum_{1\leq n,m\leq N}\int_Ie\big(\phi_n(x)-\phi_m(x)\big)\d x
            \\&=&
            \frac1{N^2}\sum_{1\leq n,m\leq N}\int_Ie\left(\sum_{i=1}^d\big(b_i(n)-b_i(m)\big)f_i(x)\right)\d x
            \\&=&
            \frac1{N^2}\sum_{1\leq n,m\leq N}\int_Ie\left(\sum_{i=d}^k\lambda_{n,m;i}f_i(x)\right)\d x.
            \end{eqnarray*}
            The integral above is always bounded by $|I|$, so combining \eqref{eq_proof_thm_mainmultidimweyl2.1} with \eqref{eq_lambdamnestimate2} and then using the fact that $b_j$ is \ie{} via \eqref{eq_iedef}, we conclude that there exists $\epsilon>0$ such that

             $$
            \|F_N\|^2_{L^2(I)}\leq C\frac1{N^2}\sum_{1\leq n,m\leq N}\min\Big(|I|,\big|b_j(n)-b_j(m)\big|^{-\delta}\Big)
            \ll
            \frac 1{(\log N)^{1+\epsilon}}
            $$
            establishing \eqref{eq_proofLeveque21} and finishing the proof.
\end{proof}

\section{Multidimensional extensions of Koksma's theorem}\label{sec_koksmaweyl}

In this section we explore multidimensional extensions of Koksma's \cref{thm_koksmafull}.
In subsection \ref{section_koksmafirst} we present a proof of \cref{thm_polKoksmaWeyl}, which combines \cref{thm_koksmafull} with the multidimensional \cref{thm_muldidimpolWeyl1,thm_muldidimpolWeyl2}.
Then in subsection \ref{sec_multikoksma} we present some observations and conjectures that are motivated by \cref{thm_mainmultidimweyl3}.

\subsection{Proof of \cref{thm_polKoksmaWeyl}}\label{section_koksmafirst}
In this subsection we prove \cref{thm_polKoksmaWeyl} whose statement we now restate for convenience of the reader.
\begin{named}{\cref{thm_polKoksmaWeyl}}{}
        Let $g:\R\to(1,\infty)$ be a non-constant analytic function and let $b:\N\to\R$ tend to $\infty$ and be \ie{}.
        Let $k\in\N$, let $p_1,\cdots, p_k\in\R[x]$ be non-constant polynomials and let $a_1,\dots,a_k:\N\to\R$ be \ie{}.
        Assume that either
    \begin{enumerate}
        \item\label{part1_thm_muldidimanalWeyl} The sequences $a_1,\dots,a_k$ are jointly \ie{},

        \item[or]
        \item\label{part2_thm_muldidimanalWeyl} The polynomials $\{1,p_1,\dots,p_k\}$ are linearly independent.
    \end{enumerate}
        Then, for almost every $x\in\R$, the sequence $\big(g(x)^{b(n)},a_1(n)p_1(x),\dots,a_k(n)p_k(x)\big)$ is u.d. in $\T^{k+1}$.
\end{named}

\begin{proof}
    In view of Weyl's criterion (\cref{thm_weylcriterion}), it suffices to show that for any non-zero $v\in\Z^{k+1}$, the sequence of functions $F_N:\R\to\C$ given by
$$F_N(x):=\frac1N\sum_{n=1}^Ne\left(v_0g(x)^{b(n)}+\sum_{i=1}^kv_ia_i(n)p_i(x)\right)$$
tends to zero almost everywhere.
Since $g$ is analytic, the set $\{x\in\R:g'(x)=0\}$ is countable and closed.
Therefore, $\R$ can be covered, up to a countable set, by countably many compact intervals where $g'(x)$ does not vanish.
Hence we may restrict attention to a single compact interval $I$ where $g'(x)$ does not vanish, and our goal is to show that the (restriction to $I$ of the) sequence $F_N(x)$ tends to zero almost everywhere on $I$.

Whenever $v_0=0$, the conclusion that $F_N(x)\to0$ for a.e. $x\in I$ follows from the exact same argument that was used to prove \cref{thm_mainmultidimweyl3}.
Suppose next that $v_0\neq0$.

We begin by computing the norms
$$\|F_N\|^2_{L^2(I)}=\frac1{N^2}\sum_{n,m=1}^N\int_Ie\left(v_0\big(g(x)^{b(n)}-g(x)^{b(m)}\big)+\sum_{i=1}^kv_ip_i(x)\big(a_i(n)-a_i(m)\big)\right)\d x$$
We choose $d\in\N$ larger than the degree of all the $p_i$ and use \cref{lemma_highordervdC} to deduce that
\begin{equation}\label{eq_proof_kosksmaweyl1}
    \|F_N\|^2_{L^2(I)}
    \leq
    \frac1{N^2}\sum_{n,m=1}^N \min\bigg(C_d\cdot\sup\Big\{\Big|f_{n,m}^{(d)}(x)\Big|^{-1/d}:x\in I\Big\},1\bigg),
\end{equation}
where $f_{n,m}(x)=v_0\big(g(x)^{b(n)}-g(x)^{b(m)}\big)$.
Denote by $\psi(x,b)=g(x)^b$ for any $x,b\in\R$, and fix a bound $b_0>0$ to be determined later.
For $n,m\in\N$ such that $b(n),b(m)>b_0$, using the mean value theorem we can estimate
\begin{equation}\label{eq_proof_kosksmaweyl3}
  \forall x\in I,\qquad  \Big|f_{n,m}^{(d)}(x)\Big|
=
|v_0|\cdot\left|\frac{\partial^d\psi}{\partial^d x}\big(x,b(n)\big)-\frac{\partial^d\psi}{\partial^d x}\big(x,b(m)\big)\right|
\geq
C\big|b(n)-b(m)\big|,
\end{equation}
where $C=C(b_0)=|v_0|\cdot\inf\left\{\left|\frac{\partial}{\partial b}\frac{\partial^d\psi}{\partial^d x}(x,b)\right|:x\in I, |b|>b_0\right\}$.

We now show that for some $b_0>0$, the value of $C$ is positive.
A routine induction (or direct computation) shows that for each fixed $x\in I$ and $d\in\N$, the map
$$\phi_x:b\mapsto \frac1{\psi(x,b)}\cdot \frac{\partial^d\psi}{\partial^d x}(x,b)$$
is a polynomial of degree $d$.
Moreover, the coefficients of $\phi_x$ depend continuously on  $x$ (in fact, they are polynomials on $g(x)$ and its derivatives) and the leading coefficient is $(g'(x))^d$.
We further compute
$$\frac{\partial}{\partial b}\frac{\partial^d\psi}{\partial^d x}(x,b)
=
\frac{\partial}{\partial b}\big( g(x)^b\cdot \phi_x(b)\big)
=
g(x)^b\Big(\phi'_x(b)-\phi_x(b)\log\big(g(x)\big)\Big).$$
The map $P_x:b\mapsto\phi'_x(b)-\phi_x(b)\log\big(g(x)\big)$ is again a polynomial whose coefficients depend continuously on $x$ and whose leading coefficient is $(g'(x))^d\log\big(g(x)\big)$.
Since $I$ is compact, $g'(x)\neq0$ on $I$ and $g$ takes values in $(1,\infty)$, it follows that if $b_0$ is sufficiently large (depending on $I,g$ and $d$) then whenever $|b|>b_0$ we have $P_x(b)>1$ for every $x\in I$.
For this value of $b_0$, the value of $C$ in \eqref{eq_proof_kosksmaweyl3} is positive.

Combining the two estimates \eqref{eq_proof_kosksmaweyl1} and \eqref{eq_proof_kosksmaweyl3} with the fact that $b$ is $1/d$-\ie, we obtain
\begin{eqnarray*}
    \|F_N\|^2_{L^2(I)}
    &\ll&
    \frac1{N^2}\sum_{n,m=1}^N\min\big(|b(n)-b(m)|^{-1/d},1\big)+o(1)
    \\&\ll&
\frac1{(\log N)^{1+\epsilon'}}
\end{eqnarray*}
for some $\epsilon'>0$.
To finish the proof, we use yet again the same argument as in the proof of \cref{thm_betterkoksma}.
    Let $\epsilon''>0$ be sufficiently small so that $(1-\epsilon'')(1+\epsilon')>1$.
    Setting $N_r:=e^{r^{1-\epsilon''}}$, we deduce that
    $$\sum_{r=1}^\infty\|F_{N_r}\|^2_{L^2(I)}
    \leq
    \sum_{r=1}^\infty\frac1{(\log N_r)^{1+\epsilon'}}
    =
    \sum_{r=1}^\infty\frac1{r^{(1-\epsilon'')(1+\epsilon')}}
    <
    \infty,$$
    which implies that the function $x\mapsto\sum_{r=1}^\infty\big|F_{N_r}(x)\big|^2$ has a finite integral and hence is finite for almost every $x\in I$.
    In particular, for almost every $x\in I$ the sequence $F_{N_r}(x)\to0$.
    The conclusion now follows from \cref{lemma_sublacunary}.
\end{proof}

Note that the proof uses crucially the fact that only the first term was not a polynomial of bounded degree in $x$, so when applying enough derivatives all terms vanish except for the first one.
The same method faces more obstacles when trying to prove that, for example, $(x^n,x^{2n})$ is uniformly distributed in $\T^2$ for almost every $x>1$; we are unable to either prove or disprove this statement (see \cref{conj_twodimKoksma} below for a more general statement).
In short, in such cases we cannot rule out the possibility that the supremum in \eqref{eq_proof_kosksmaweyl1} is large, or even infinite, for many choices of $n,m$.

\subsection{Multidimensional variants of \cref{thm_betterkoksma}}
\label{sec_multikoksma}

While \cref{thm_curvesformulation} gives a satisfactory extension of Weyl's theorem to curves in higher dimensions, similar multidimensional extensions of Koksma's \cref{thm_betterkoksma} are significantly more challenging.
\cref{thm_polKoksmaWeyl} proved in the previous subsection, while containing the one dimensional \cref{thm_betterkoksma} as a special case, is suggestive of several further amplifications.
The goal of this subsection is to discuss some conjectures in this direction.

We start the discussion with the following simple proposition, which for expository reasons we formulate only in two dimensions.
\begin{proposition}\label{prop_2dimKoksma}
Let $(u_n)_{n\in\N}$ and $(v_n)_{n\in\N}$ satisfy the conditions of \cref{thm_betterkoksma} on the intervals $I$ and $J$ respectively.
Then there is a full measure set of pairs $(x,y)\in I\times J$ for which the sequence $\big(u_n(x),v_n(y)\big)_{n\in\N}$ is uniformly distributed in $\T^2$.
\end{proposition}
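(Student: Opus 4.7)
My plan is to run a two-dimensional version of the argument used for \cref{thm_betterkoksma}, combining Weyl's criterion (\cref{thm_weylcriterion}) with Fubini. Fix $(k,\ell)\in\Z^2\setminus\{(0,0)\}$ and set
$$F_N(x,y):=\frac1N\sum_{n=1}^N e\big(ku_n(x)+\ell v_n(y)\big).$$
Since $\Z^2$ is countable, it suffices to show that for each such pair $(k,\ell)$ one has $F_N(x,y)\to 0$ for almost every $(x,y)\in I\times J$.

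Expanding the square and using Fubini,
\begin{equation*}
\|F_N\|_{L^2(I\times J)}^2=\frac1{N^2}\sum_{n,m=1}^N A_{n,m}(k)\,B_{n,m}(\ell),
\end{equation*}
with $A_{n,m}(k):=\int_I e\big(k(u_n(x)-u_m(x))\big)\d x$ and $B_{n,m}(\ell)$ defined analogously on $J$. Without loss of generality assume $k\neq0$ (the other case is symmetric). Combining the trivial bound $|B_{n,m}(\ell)|\leq|J|$ with the two bounds on $A_{n,m}(k)$ coming from $|e(\cdot)|=1$ and from \cref{lemma_vdCestimate}, I obtain
$$\|F_N\|_{L^2(I\times J)}^2 \ll \frac{|J|}{N^2}\sum_{n,m=1}^N \min\!\left(|I|,\,\frac{|u_n'(a_I)-u_m'(a_I)|^{-1}+|u_n'(b_I)-u_m'(b_I)|^{-1}}{|k|}\right),$$
where $a_I<b_I$ are the endpoints of $I$. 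Since the sequences $n\mapsto u_n'(a_I)$ and $n\mapsto u_n'(b_I)$ are $1$-\ie, the same computation that appears at the end of the proof of \cref{thm_betterkoksma} produces some $\epsilon>0$ such that $\|F_N\|_{L^2(I\times J)}^2 \ll (\log N)^{-(1+\epsilon)}$.

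To conclude I would mimic the lacunary-subsequence argument from \cref{thm_betterkoksma}: choose $\delta>0$ with $(1-\delta)(1+\epsilon)>1$, set $N_r:=\lceil e^{r^{1-\delta}}\rceil$ so that $\sum_r\|F_{N_r}\|_{L^2(I\times J)}^2<\infty$, conclude that $F_{N_r}(x,y)\to 0$ almost everywhere on $I\times J$, and finally upgrade this to $F_N(x,y)\to0$ almost everywhere via \cref{lemma_sublacunary} (available because $N_{r+1}/N_r\to1$). There is no serious obstacle here; the proof is essentially mechanical once the $L^2$ computation is in place. The only point worth remarking on is that no joint hypothesis on the pair $(u_n,v_n)$ is required: when one of $k,\ell$ vanishes we reduce to a one-dimensional Koksma estimate times a trivial Lebesgue integral, and when both are nonzero we may discard all information about one of the two variables and still extract sufficient logarithmic decay from the other.
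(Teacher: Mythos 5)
Your proof is correct, but it takes a different route from the paper's. You run the $L^2$ machinery again from scratch, this time on the product space $I\times J$: after Weyl's criterion you expand $\|F_N\|^2_{L^2(I\times J)}$, factor each correlation term as $A_{n,m}(k)B_{n,m}(\ell)$ via Fubini, bound the factor corresponding to the vanishing-or-arbitrary frequency trivially by the interval length, and bound the other factor by the van der Corput estimate (\cref{lemma_vdCestimate}); the $1$-\ie{} hypothesis then gives the logarithmic decay and the lacunary subsequence argument finishes the job. The paper instead treats \cref{thm_betterkoksma} as a black box: fixing $(a,b)\in\Z^2\setminus\{\vec0\}$ with, say, $a\neq0$, it observes that for each \emph{fixed} $y\in J$ the sequence of functions $w_n(x):=au_n(x)+bv_n(y)$ itself satisfies the hypotheses of \cref{thm_betterkoksma} (since $w_n'=au_n'$ and $a\in\Z\setminus\{0\}$ preserves the monotonicity and $1$-\ie{} conditions), so $(w_n(x))_n$ is u.d.\ for a.e.\ $x\in I$; a Fubini argument in $y$ then upgrades this to a full-measure set of pairs $(x,y)$. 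The paper's route is shorter and avoids redoing the $L^2$ computation, while yours is more self-contained — you only need \cref{lemma_vdCestimate}, \cref{lemma_sublacunary}, and the definition of $1$-\ie{} directly, rather than the full statement of \cref{thm_betterkoksma}. Both proofs are sound; your observation at the end, that no joint hypothesis on the pair $(u_n,v_n)$ is needed because you can always discard one variable entirely, is exactly the same structural insight that powers the paper's slicing argument, just expressed inside the integral rather than at the level of the theorem statement.
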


\begin{proof}
    In view of Weyl's criterion (\cref{thm_weylcriterion}), it suffices to show that for each $(a,b)\in\Z^2\setminus\{(0,0)\}$ there is a full measure set of pairs $(x,y)\in I\times J$ for which
    \begin{equation}\label{eq_proof_prop_2dimKoksma1}
        \lim_{N\to\infty}\frac1N\sum_{n=1}^Ne\big(au_n(x)+bv_n(y)\big)=0.
    \end{equation}
    If $a\neq0$, then we will show that for every $y\in J$ there is a full measure set of $x\in I$ for which \eqref{eq_proof_prop_2dimKoksma1} holds.
    Indeed, in this case the sequence $w_n(x):=au_n(x)+bv_n(y)$ satisfies the conditions of \cref{thm_betterkoksma}, which then implies that for almost every $x\in I$ the sequence $w_n(x)$ is uniformly distributed modulo one.
    Weyl's criterion then implies for each such $x$, \eqref{eq_proof_prop_2dimKoksma1} holds.

    If $a=0$ then $b\neq0$. In this case we can reverse the roles of the two coordinates above and deduce that for every $x\in I$ there exists a full measure set of $y\in J$ for which \eqref{eq_proof_prop_2dimKoksma1} holds.
\end{proof}

One may hope to find a ``curve'' version of the above proposition that extends Koksma's \cref{thm_betterkoksma} in the same way as \cref{thm_mainmultidimweyl3} extends Weyl's \cref{thm_weylrealsbaby}. However one needs to heed the following simple (counter)example.

\begin{example}\label{example_badforKoskma}
     Take $u_n(x)=nx$, $v_n(x)=n\sqrt{x}$, $f_1(x)=x$, $f_2(x)=x^2$. Then $u_1$ and $u_2$ both satisfy the conditions of \cref{thm_betterkoksma} and $\{1,f_1,f_2\}$ are linearly independent; however the pair $(u_n(f_1(x)),v_n(f_2(x)))=(nx,nx)$ is not uniformly distributed in $\T^2$ for any $x$.
\end{example}

We propose two possible ways to avoid the pitfall illustrated by \cref{example_badforKoskma}.

\begin{conjecture}\label{conj_twodimKoksma}
   Let $(u_n)_{n\in\N}$ satisfy the conditions of \cref{thm_betterkoksma}.
   Let $f_1,\dots,f_k:\R\to\R$ be analytic functions such that $\{1,f_1,\dots,f_k\}$ are linearly independent.
Then for almost every $x\in\R$ the sequence $$\big(u_n(f_1(x)),\dots,u_n(f_k(x))\big)_{n\in\N}.$$
is uniformly distributed in $\T^k$.
\end{conjecture}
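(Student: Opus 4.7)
The natural plan is to emulate the proofs of \cref{thm_mainmultidimweyl1} and \cref{thm_betterkoksma}. By \cref{thm_weylcriterion}, it suffices to show that for every non-zero $v=(v_1,\dots,v_k)\in\Z^k$ and every compact interval $I\subset\R$, the functions
$$F_N(x):=\frac1N\sum_{n=1}^Ne\!\left(\sum_{i=1}^k v_i u_n(f_i(x))\right)$$
tend to $0$ for a.e.\ $x\in I$. As in the proof of \cref{thm_betterkoksma}, this reduces to an $L^2$ bound of the form $\|F_N\|_{L^2(I)}^2\ll(\log N)^{-1-\epsilon}$, combined with \cref{lemma_sublacunary} to upgrade from a suitably slow subsequence. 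Expanding the square, one is led to estimate
$$J_{n,m}:=\int_I e\!\left(\sum_{i=1}^k v_i\big(u_n(f_i(x))-u_m(f_i(x))\big)\right)\d x$$
for each pair $(n,m)$, and then to carry out the same kind of double-counting as in \eqref{eq_iedef}.

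Since $u_n$ is only assumed to be $C^1$, the natural tool is van der Corput's first-derivative estimate \cref{lemma_vdCestimate}, which requires a lower bound on
$$\Phi'_{n,m}(x)=\sum_{i=1}^k v_i\big(u'_n-u'_m\big)(f_i(x))\cdot f'_i(x).$$
This is where the main difficulty lies: the hypotheses imply that $|u'_n-u'_m|$ is monotone on $[a,b]$ and that the endpoint sequences $(u'_n(a))_n$ and $(u'_n(b))_n$ are $1$-\ie{}, which together give pointwise lower bounds on $|u'_n(y)-u'_m(y)|$; but in the sum defining $\Phi'_{n,m}$ the $k$ summands may cancel at a given $x$, and the linear independence of $\{1,f_1,\dots,f_k\}$ does not obviously preclude this pointwise cancellation. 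A plausible strategy is to partition $I$ (using analyticity of the $f_i$) into subintervals on which each $f_i$ is a monotone diffeomorphism and on which some distinguished index $j$ has $|v_j f'_j|$ dominating the other $|v_if'_i|$. On such a subinterval one performs the change of variables $y=f_j(x)$, turning $J_{n,m}$ into an integral whose phase is $v_j(u_n-u_m)(y)$ plus a perturbation coming from the remaining coordinates, and the hope is that \cref{lemma_vdCestimate} applied in $y$ delivers a bound of the form $|u'_n(\alpha)-u'_m(\alpha)|^{-1}+|u'_n(\beta)-u'_m(\beta)|^{-1}$ at the endpoints $\alpha,\beta$ of $f_j(I)$, which is exactly the type of bound that the $1$-\ie{} hypothesis can sum over $(n,m)$.

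The key obstacle is that this perturbation need not be smaller than the principal term: the differences $u_n(f_i(x))-u_m(f_i(x))$ for $i\neq j$ can be of comparable magnitude to $v_j(u_n-u_m)(y)$, so isolating a single oscillating factor is not automatic. To push the proof through, one would either need a quantitative non-cancellation lemma specific to the compositions $u_n\circ f_i$, which translates linear independence of $\{1,f_1,\dots,f_k\}$ into a statement about the joint behaviour of the perturbation terms, or additional regularity $u_n\in C^d$ that would allow iterated Faà di Bruno expansion together with \cref{lemma_highordervdC} to exploit linear independence on the higher derivatives of $\Phi_{n,m}$ directly. In the special cases $u_n(y)=a_ny$ or $u_n$ polynomial the conjecture is recovered via \cref{thm_mainmultidimweyl3,thm_polKoksmaWeyl} respectively, so the genuinely new challenge is precisely the combination of low regularity with non-linearity of $u_n$.
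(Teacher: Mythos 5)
This statement is a \emph{conjecture} in the paper (\cref{conj_twodimKoksma}); the authors do not prove it, and there is therefore no proof for your proposal to be compared against. Your write-up, which stops short of a complete argument and instead identifies the obstruction, is appropriate here. Your diagnosis of the difficulty is essentially the one the authors themselves acknowledge: the single-derivative van der Corput estimate (\cref{lemma_vdCestimate}) requires a pointwise lower bound on $\Phi'_{n,m}(x)=\sum_i v_i(u'_n-u'_m)(f_i(x))f'_i(x)$, and the $1$-\ie{} hypothesis on the endpoint sequences $u_n'(a)$, $u_n'(b)$ controls only the magnitudes $|u'_n-u'_m|(y)$ at fixed $y$, not the possible cancellation among the $k$ chain-rule terms. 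Linear independence of $\{1,f_1,\dots,f_k\}$ rules out \emph{identical} cancellation but not cancellation at positive-measure sets of $x$ of an $n,m$-dependent kind, and neither a change of variables $y=f_j(x)$ nor the higher-derivative bound \cref{lemma_highordervdC} applies cleanly with only $C^1$ regularity of the $u_n$. This matches the authors' remark after \cref{thm_polKoksmaWeyl} that their method, which works there because only one coordinate involves a Koksma-type sequence, ``faces more obstacles'' in this setting.

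One inaccuracy: you write that in the special case ``$u_n$ polynomial'' the conjecture is recovered via \cref{thm_polKoksmaWeyl}. That is not so. \cref{thm_polKoksmaWeyl} applies to tuples with \emph{exactly one} Koksma-type coordinate $g(x)^{b(n)}$ alongside Weyl-type coordinates $a_i(n)p_i(x)$, whereas here all $k$ coordinates are of Koksma type $u_n(f_i(x))$. In particular, the case $u_n(y)=y^n$, $f_1(x)=x$, $f_2(x)=x^2$, giving the sequence $(x^n,x^{2n})$, is explicitly stated by the authors to be open, and is in fact the motivating example for this conjecture. The genuinely accessible special case is $u_n(y)=a_ny$, where the conjecture reduces to case (2) of \cref{thm_mainmultidimweyl3}; already $u_n(y)=y^n$ falls outside what the paper establishes.
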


\begin{conjecture}
   Let $I\subset\R$ be a compact interval, $k\in\N$ and, for each $i=1,\dots,k$, let $(u_{i;n})_{n\in\N}$ be a sequence of functions in $C^1(I,\R)$. Assume that the derivatives $u_{i,n}'$ are monotone for every $i$ and $n$.
   Let $f_1,\dots,f_k:\R\to\R$ be analytic functions.

   Assume that for every $a\in\R$ the sequences $u_{1,n}'(a),\dots,u_{k,n}'(a)$ are jointly \ie.
Then for almost every $x\in\R$ the sequence $$\big(u_{1,n}(f_1(x)),\dots,u_{k,n}(f_k(x))\big)_{n\in\N}.$$
is uniformly distributed in $\T^k$.
\end{conjecture}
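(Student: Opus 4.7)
The plan is to adapt the proof of \cref{thm_mainmultidimweyl2} to this \emph{curved} setting, treating the sequences $n \mapsto u_{i,n}(f_i(x))$ as nonlinear analogues of $n \mapsto a_i(n) f_i(x)$. First, Weyl's criterion (\cref{thm_weylcriterion}) reduces the problem to showing that for every nonzero $v \in \Z^k$, the exponential sum
\[F_N(x) \;:=\; \frac{1}{N} \sum_{n=1}^N e\bigl(\phi_n(x)\bigr), \qquad \phi_n(x) := \sum_{i=1}^k v_i u_{i,n}(f_i(x))\]
tends to $0$ almost everywhere.

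Next, I would exploit analyticity to localize: the set $\{x : f_1'(x) \cdots f_k'(x) = 0\}$ is discrete (each $f_i$ is analytic and non-constant), so modulo a null set $\R$ is covered by countably many compact intervals $J$ on which every $f_i$ is strictly monotone with $f_i' \neq 0$ and $f_i(J) \subset I$. It suffices to prove $F_N \to 0$ a.e.~on one such $J$. Expanding $\|F_N\|_{L^2(J)}^2$ produces a double sum of oscillatory integrals $\int_J e(\psi_{n,m})\d x$ with $\psi_{n,m} := \phi_n - \phi_m$.

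The central step would then be a van der Corput estimate for $\int_J e(\psi_{n,m})\d x$. The first derivative
\[\psi_{n,m}'(x) \;=\; \sum_{i=1}^k v_i\, f_i'(x)\bigl(u_{i,n}'(f_i(x)) - u_{i,m}'(f_i(x))\bigr)\]
is a difference of two monotone functions of $x$ (each $u_{i,n}' \circ f_i$ is monotone on $J$ by hypothesis), hence of uniformly bounded variation on $J$; a standard strengthening of \cref{lemma_vdCestimate} via integration by parts (or via splitting $J$ into pieces on which $\psi_{n,m}'$ is monotone) yields
\[\Bigl|\int_J e(\psi_{n,m}(x))\d x\Bigr| \;\ll\; \frac{1}{\min_{x \in \partial J} |\psi_{n,m}'(x)|}.\]
Evaluating at an endpoint $a \in \partial J$ and writing $w_i := v_i f_i'(a) \neq 0$ and $c_i := f_i(a)$,
\[\psi_{n,m}'(a) \;=\; \sum_{i=1}^k w_i \bigl(u_{i,n}'(c_i) - u_{i,m}'(c_i)\bigr).\]
If the scalar sequence $b(n) := \sum_i w_i u_{i,n}'(c_i)$ were \ie, then the $(\log N)^{-1-\epsilon}$-decay of $\|F_N\|_{L^2(J)}^2$ proved in \cref{thm_mainmultidimweyl2} would carry through, followed by the usual subsequence extraction $N_r = e^{r^{1-\epsilon'}}$ and \cref{lemma_sublacunary}.

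The main obstacle lies precisely in verifying that $b(n) = \sum_i w_i u_{i,n}'(c_i)$ is \ie. The stated hypothesis provides joint \ie-ness of $u_{1,n}'(c),\ldots,u_{k,n}'(c)$ only when all coordinates are evaluated at a \emph{common} point $c$, whereas here the evaluations occur at the distinct points $c_i = f_i(a)$; joint \ie-ness at a single point does not imply \ie-ness of $\sum_i w_i u_{i,n}'(c_i)$ when the $c_i$ differ. Bridging this ``joint at a point'' versus ``joint along a curve'' gap seems to require either (i) strengthening the hypothesis to joint \ie-ness across arbitrary $k$-tuples $(c_1,\ldots,c_k)$, (ii) a geometric construction of $J$ with an endpoint at which all $f_i(a)$ coincide (only possible under very restrictive conditions on the $f_i$), or (iii) replacing the endpoint-based van der Corput step by a Wright-type oscillatory integral bound in the spirit of \cref{thm_wright}, applied to an auxiliary family of analytic functions built from the $f_i$. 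None of these routes is immediate, and this mismatch is what we regard as the genuine difficulty of the conjecture.
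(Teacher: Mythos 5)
This statement is labeled a \emph{conjecture} in the paper; the authors do not prove it, so there is no proof to compare against. What you have written is, in effect, a careful diagnosis of why the method that proves \cref{thm_muldidimpolWeyl2} and \cref{thm_mainmultidimweyl2} does not carry over, and that diagnosis is correct. The decisive observation — that applying the van der Corput bound on a subinterval $J$ with endpoint $a$ produces the phase-derivative $\sum_i w_i\bigl(u_{i,n}'(c_i)-u_{i,m}'(c_i)\bigr)$ with \emph{distinct} evaluation points $c_i=f_i(a)$, while the hypothesis only gives joint \ie{}-ness of $\bigl(u_{1,n}'(c),\dots,u_{k,n}'(c)\bigr)$ at a single common point $c$ — is precisely the kind of gap that leaves this open, and your three suggested remedies are the natural ones to consider.

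For comparison, the paper's own commentary on the difficulty (in the paragraph following the proof of \cref{thm_polKoksmaWeyl}) emphasizes a closely related bottleneck from a slightly different angle: the proof of \cref{thm_polKoksmaWeyl} exploits the fact that only the single Koksma-type term $g(x)^{b(n)}$ survives after enough $x$-differentiations, and once there are several non-polynomial Koksma terms (the prototype being $(x^n,x^{2n})$), one can no longer rule out that the supremum appearing in the van der Corput estimate is large or infinite for many pairs $(n,m)$. Your mismatched-evaluation-point phrasing captures the same phenomenon in the fully general setting of the conjecture. One minor caveat on your localization step: covering $\R$ by compact intervals $J$ with $f_1'\cdots f_k'\neq 0$ on $J$ and $f_i(J)\subset I$ implicitly requires each $f_i$ to be non-constant, an assumption the conjecture omits (if some $f_i$ were constant the $i$-th coordinate would be frozen and the conclusion false); this is a small imprecision in the statement of the conjecture rather than a flaw in your reasoning.
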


Note that either of the above conjectures implies that the sequence $(x^n,x^{2n})_{n=1}^\infty$ is uniformly distributed on $\T^2$ for almost every $x>1$.

Next we focus our attention on the special case when the sequences $u_n(x)$ take either the form $a(n)f(x)$ or the form $g(x)^{b(n)}$.
This was the setting of \cref{question_KoksmaWeyl2k}. We now conjecture a possible answer, which extends the scope of  \cref{conj_koksma} formulated in the introduction.

\begin{conjecture}\label{conjecture_KoksmaWeyl2k}
    Let $g_1,\dots,g_k:\R\to(1,\infty)$ and $f_1,\dots,f_k:\R\to\R$ be analytic functions.
    Let $a_1,\dots,a_k,b_1,\dots,b_k:\N\to\R$ be sequences such that $a_1,\dots,a_k$ are jointly \ie{} and $b_1,\dots,b_k$ are jointly \ie{}.

    Assume that both sets $\{1,g_1,\dots,g_k\}$ and $\{1,f_1,\dots,f_k\}$ are linearly independent over $\R$. Then for almost every $x\in\R$ the sequence
    $$\big(g_1(x)^{b_1(n)},\dots,g_k(x)^{b_k(n)},a_1(n)f_1(x),\dots,a_k(n)f_k(x)\big)$$
    is uniformly distributed on $\T^{2k}$.
\end{conjecture}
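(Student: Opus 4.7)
The plan is to follow the arc used throughout the paper. Weyl's criterion (\cref{thm_weylcriterion}) reduces the problem to showing that for each non-zero $(u,v)=(u_1,\dots,u_k,v_1,\dots,v_k)\in\Z^{2k}$, the exponential sums
$$F_N(x) := \frac{1}{N}\sum_{n=1}^{N} e\!\left(\sum_{i=1}^{k} u_i g_i(x)^{b_i(n)} + \sum_{i=1}^{k} v_i a_i(n) f_i(x)\right)$$
tend to $0$ for a.e.\ $x\in\R$. When $u=0$ this is exactly the content of \cref{thm_mainmultidimweyl3}. Henceforth I would assume $u\neq 0$ and restrict attention to a compact interval $I$ on which all $g_i'$ are nonvanishing; such intervals cover $\R$ up to a countable (hence null) set since the $g_i$ are analytic. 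Via the sublacunary-subsequence device (\cref{lemma_sublacunary}), it suffices to prove $\|F_N\|_{L^2(I)}^2\ll(\log N)^{-1-\epsilon}$ for some $\epsilon>0$.

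Expanding the square reduces matters to bounding the oscillatory integrals
$$J_{n,m}:=\int_I e\big(\Phi_{n,m}(x)\big)\d x,\qquad \Phi_{n,m}(x):=\sum_i u_i\!\left(g_i(x)^{b_i(n)}-g_i(x)^{b_i(m)}\right)+\sum_i v_i f_i(x)\big(a_i(n)-a_i(m)\big).$$
The central new ingredient I would need is a hybrid oscillatory-integral estimate generalising both \cref{thm_wright} and the derivative argument in the proof of \cref{thm_polKoksmaWeyl}: for every $v\in\R^k$, every $u\in\R^k\setminus\{0\}$, and every $(\beta_1,\dots,\beta_k)\in\R^k$ with $\max_i|\beta_i|$ sufficiently large,
$$\left|\int_I e\!\left(\sum_i u_i g_i(x)^{\beta_i}+\sum_i v_i f_i(x)\right)\d x\right|\leq C\cdot\max_i|\beta_i|^{-\delta},$$
for constants $C,\delta>0$ depending only on $I$, $u$, the $g_i$, and the $f_i$.

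To prove this estimate I would apply the high-order van der Corput lemma (\cref{lemma_highordervdC}) of order $d$ with $d$ large. The $d$-th derivative of $u_i g_i(x)^{\beta_i}$ on $I$ is approximately $u_i\beta_i^d(g_i'/g_i)^d g_i^{\beta_i}$ plus terms that are polynomial of lower degree in $\beta_i$, while the derivatives of $\sum v_i f_i(x)$ are bounded uniformly in $\beta$. Linear independence of $\{1,g_1,\dots,g_k\}$ over $\R$, together with the fact that on $I$ the functions $x\mapsto g_i(x)$ take distinct values in $(1,\infty)$, should prevent uniform cancellation of the leading exponential sum $\sum_i u_i\beta_i^d(g_i'/g_i)^d g_i(x)^{\beta_i}$ on $I$; a quantitative Wright-style argument in the spirit of the appendix should then turn this non-cancellation into a lower bound of order $\max_i|\beta_i|^d$ on the $d$-th derivative of the phase, from which the hybrid estimate follows.

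Granted the hybrid estimate, I would conclude along the lines of the proof of \cref{thm_polKoksmaWeyl}: the contribution of pairs $(n,m)$ with $\max_i\max(|b_i(n)|,|b_i(m)|)$ below a large constant is $o(1)$ (since each $b_i\to\infty$, such pairs are sparse in $\{1,\dots,N\}^2$), while on the remaining pairs the hybrid estimate yields $|J_{n,m}|\ll \max_i|b_i(n)-b_i(m)|^{-\delta}\leq |b_1(n)-b_1(m)|^{-\delta}$. Since $b_1$ is \ie{} (the joint \ie{} hypothesis applied to $e_1\in\R^k$), \eqref{eq_iedef} gives $\|F_N\|_{L^2(I)}^2\ll(\log N)^{-1-\epsilon}$, and \cref{lemma_sublacunary} closes the argument. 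The main obstacle is the hybrid estimate itself: unlike in Wright's theorem, where the family of phase functions is fixed and one studies varying linear coefficients, here the phase $\sum u_i g_i(x)^{\beta_i}$ depends non-linearly on exponents $\beta_i$ that are themselves input parameters, and common critical points or near-cancellations among the terms $g_i(x)^{\beta_i}$ can in principle conspire to spoil the derivative lower bound for special tuples $(u,\beta)$. Ruling these out uniformly---perhaps via a careful study of the critical manifold of the principal phase, or a Łojasiewicz-type inequality adapted to exponential sums---is where a genuinely new analytic idea is required.
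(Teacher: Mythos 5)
The statement you were asked to prove is in fact posed by the paper as an \emph{open} conjecture: \cref{conjecture_KoksmaWeyl2k} carries no proof. The authors prove only the $k=1$ case (\cref{thm_weyl_koksmafusion}), via a change-of-variable trick that they explicitly say does not extend to higher $k$, and they remark after the proof of \cref{thm_polKoksmaWeyl} that they are unable to settle even the model case whether $(x^n,x^{2n})_{n\ge1}$ is u.d.\ in $\T^2$ for a.e.\ $x>1$, which is a special case of the conjecture. So there is no paper proof to compare against.

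As for your proposal: you correctly reconstruct the strategy that the rest of the paper would suggest (Weyl criterion, restriction to compact intervals, an $L^2$ bound of order $(\log N)^{-1-\epsilon}$, \cref{lemma_sublacunary}), and, importantly, you correctly isolate the one point where the argument cannot currently be pushed through — the ``hybrid'' oscillatory-integral estimate
$\bigl|\int_I e\bigl(\sum_i u_i g_i(x)^{\beta_i}+\sum_i v_if_i(x)\bigr)\d x\bigr|\ll\max_i|\beta_i|^{-\delta}$.
That is exactly the obstruction the authors describe. The gap is real, and the reason your sketched derivative argument does not close it is structural: Wright's \cref{thm_wright} treats phases that are \emph{linear} in the parameter $\lambda$, with a \emph{fixed} finite family of analytic functions, and the crux of that proof is a compactness argument over $I\times\mathbb{S}^{k-1}$. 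In your hybrid phase the exponents $\beta_i$ enter nonlinearly, and the building blocks $g_i(x)^{\beta_i}$ themselves change as $\beta$ ranges over $\R^k$, so there is no fixed family to which compactness applies; for each fixed derivative order $d$ there can be tuples $(x,\beta)$ where $\phi^{(d)}$ is tiny, and there is no uniform choice of $d$ that works for all $\beta$. That is why the paper records this as a conjecture rather than a theorem. A secondary point: you invoke ``$b_i\to\infty$'' when discarding small-$|b|$ pairs, but the conjecture's hypotheses assume only that $b_1,\dots,b_k$ are jointly \ie{} (the assumption $b\to\infty$ appears in \cref{thm_polKoksmaWeyl}, not here). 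The intended argument is recoverable — being \ie{} forces $\{n:|b_i(n)|\le M\}$ to have zero density for each fixed $M$, as used in the proof of \cref{lemma_iecomposition} — but you should phrase it that way rather than asserting $b_i\to\infty$.
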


Using a ``change of variable trick'' we can prove the following result, corresponding to a special case of \cref{conjecture_KoksmaWeyl2k}. Note that, in comparison with \cref{thm_polKoksmaWeyl}, we have only one $f$, but it can be any analytic function, not necessarily a polynomial.
\begin{named}{\cref{thm_weyl_koksmafusion}}{}
        Let $f,g:\R\to\R$ be non-constant analytic functions such that $g(\R)\subset(1,\infty)$ and let $a,b:\N\to\R$ be \ie{} sequences.
        Then for almost every $x\in\R$ the sequence $\big(g(x)^{b(n)},a(n)f(x)\big)$ is u.d. on $\T^2$.
\end{named}
\begin{proof}
    Let $I_0\subset\R$ be a compact interval where $f'$ does not vanish and let $I:=f(I_0)$ and $h=g\circ f^{-1}$.
    Note that $f:I_0\to I$ is a bijection, so $f^{-1}$ and hence $h$ is well-defined on $I$.
    Applying \cref{thm_polKoksmaWeyl} we conclude that for almost every $x\in I$, the sequence $\big(h(x)^{b(n)},a(n)x\big)$ is u.d. on $\T^2$.
    In view of \cref{prop_analyticabscont} it follows that for almost every $x\in I_0$, the sequence $\big(g(x)^{b(n)},a(n)f(x)\big)$ is u.d. on $\T^2$.

    Since $f$ is analytic and non-constant, the set $\{x:f'(x)=0\}$ is discrete, and hence it has zero measure and its complement can be covered with countably many compact intervals $I_0$.
    The previous paragraph shows for each such interval $I_0$, for almost every $x\in I_0$, the sequence $\big(g(x)^{b(n)},a(n)f(x)\big)$ is u.d. on $\T^2$, and the conclusion follows from the fact that the countable union of sets of measure zero has measure zero.
    \end{proof}

Alas, the ``change of variable'' trick used in the proof of \cref{thm_weyl_koksmafusion} does not extend to longer patterns such as
$\big(g(x)^{b(n)},a_1(n)f_1(x),a_2(n)f_2(x)\big)$ when $f_1$ and $f_2$ are arbitrary non-constant analytic functions.

\appendix

\section{An oscillatory integral -- by Jim Wright}

In this appendix we prove \cref{thm_wright} from the introduction, whose statement we now recall.

\begin{named}{\cref{thm_wright}}{}
            Let $k\in\N$, let $f_1,\dots,f_k:\R\to\R$ be analytic functions such that $\{1,f_1,\dots,f_k\}$ is linearly independent over $\R$, and let $I\subset\R$ be a compact interval.

            Then there exist $C,\delta>0$ such that for every non-zero $\lambda=(\lambda_1,\dots,\lambda_k)\in\R^k$,
            \begin{equation}
            \left|\int_Ie\big(\lambda_1f_1(x)+\cdots+\lambda_kf_k(x)\big)\d x\right|\leq C\|\lambda\|^{-\delta}.\tag{\ref{eq_wrightestimate}}
            \end{equation}
        \end{named}
\begin{remark}
    The condition that $\{1,f_1,\dots,f_k\}$ is linearly independent over $\R$ is necessary, as otherwise there would exist a non-zero vector $\lambda=(\lambda_1,\dots,\lambda_k)\in\R^k$ such that the function $x\mapsto \lambda_1f_1(x)+\cdots+\lambda_kf_k(x)$ is constant.
Then for any scalar multiple $\tilde\lambda=c\lambda$ of this vector the left hand side of \eqref{eq_wrightestimate} equals $|I|$, but the right hand side tends to $0$ as $c\to\infty$.
\end{remark}

\begin{proof} Denote by $f:I\to\R^k$ the vector valued function $f(x)=\big(f_1(x),\dots,f_k(x)\big)$ and by ${\mathbb S}^{k-1}=\{\omega\in\R^k:\|\omega\|=1\}$ the unit sphere in $\R^k$.
For each $\lambda\in\R^k$ consider the map
$\phi_\lambda:I\to\R$ given by $\phi_\lambda(x)=\lambda\cdot f(x)$.

The assumption that $\{1,f_1,\dots,f_k\}$ is linearly independent over $\R$ implies that the function $\phi_\omega$ is not constant for any $\omega\in{\mathbb S}^{k-1}$.
As $\phi_\omega$ is a real-analytic function, this in turn implies that
for every $(x,\omega) \in I \times {\mathbb S}^{k-1}$, there is an integer $d = d(x,\omega) \ge 1$ such that
\begin{equation*}\label{nonzero}
\phi_\omega^{(d)}(x) \ \not= \ 0.
\end{equation*}

Continuity of the maps $(y,\omega)\mapsto \phi_\omega^{(d)}(y)$ now implies that for every $(y, \omega) \in I \times {\mathbb S}^{k-1}$, there is $d\in\N$, $c(y,\omega)>0$ and an open neighborhood $I_y \times S_{\omega} \subset I\times{\mathbb S}^{k-1}$ of $(y,\omega)$, such that
for all $(x,\theta) \in I_y \times S_{\omega}$,
\begin{equation}\label{bound-below}
\Bigl| \phi_\theta^{(d)}(x) \Bigr| \ \ge \ c(y,\omega).
\end{equation}
By compactness of $I \times {\mathbb S}^{k-1}$, we can extract a finite sub-cover
$$
I \times {\mathbb S}^{k-1} \ = \
\bigcup_{(y,\omega)\in {\mathcal M}}  I_y \times S_{\omega}.
$$
In particular, for any $\omega_0\in{\mathbb S}^{k-1}$,
\begin{equation}\label{eq_claim2}
    I=\bigcup_{(y,\omega)\in{\mathcal M}\atop \omega_0\in S_\omega}I_y.
\end{equation}

Now fix a non-zero $\lambda \in {\mathbb R}^k$ and write
$\lambda = \|\lambda\|\cdot \omega_0$ where $\omega_0 \in {\mathbb S}^{k-1}$.
Denote by $R$ the (finite) set of $y\in I$ for which there is $\omega\in {\mathbb S}^{k-1}$ with $(y,\omega)\in{\mathcal M}$ and $\omega_0\in S_\omega$.
Note that $|R|\leq|{\mathcal M}|$, and in particular it is controlled in terms of $f$ and $I$ only.
In view of \eqref{eq_claim2} we have
\begin{equation}\label{eq_partitioningtheinterval}
\left|\int_Ie\big(\phi_\lambda(x)\big)\d x\right|
\leq
\sum_{y\in R} \left|\int_{I_y} \, e\big(\phi_\lambda(x)\big)\d x\right| .
\end{equation}

From \eqref{bound-below} it follows that for each $y\in R$ there exists $d\in\N$ and a constant $c_y > 0$
such that
\begin{equation}\label{bound-below-0}
 \Bigl| \phi_{\lambda}^{(d)}(x) \Bigr| \ = \ \Bigl| \|\lambda\|\cdot\phi_{\omega_0}^{(d)}(x) \Bigr| \ \ge \ \|\lambda\|\cdot c_y \ \ge \ \|\lambda\|\cdot c
\end{equation}
for all $x \in I_y$, where $c:=\min_{(y,\omega)\in{\mathcal M}}c(y,\omega)$ is positive (because ${\mathcal M}$ is finite) and depends only on $f$ and $I$.

If $d\ge 2$, we combine \eqref{bound-below-0} with \cref{lemma_highordervdC} to deduce that
\begin{equation}\label{eq_whend>1}
\left|\int_{I_y} \, e\big(\phi_\lambda(x)\big)\d x\right|
\leq
C_d\big(\|\lambda\|\cdot c\big)^{-1/d}
\ll\|\lambda\|^{-1/d},
\end{equation}
where the implicit constant may depend on $f$ and $I$ but not on $\lambda$.

If $d=1$, we integrate by parts, writing $e\big(\phi_\lambda(x)\big)=\frac{1}{2\pi i}
\frac{d}{dx} \Bigl[
e\big(\phi_\lambda(x)\big) \Bigr] \frac{1}{\phi_\lambda'(x)}$ and hence
$$
\int_{I_y} e\big(\phi_\lambda(x)\big) \d x
= \frac{1}{2\pi i}
\left[\int_{I_y} e\big(\phi_\lambda(x)\big) \frac{\phi_\lambda''(x)}{\big(\phi_\lambda'(x)\big)^2} \d x
+\left.\left(e\big(\phi_\lambda(x)\big) \frac{1}{\phi_\lambda'(x)}\right)\right|_{I_y}\right].
$$
Combined with \eqref{bound-below-0}, this leads to the estimate
\begin{equation}\label{eq_whend=1}
    \left|\int_{I_y} e\big(\phi_\lambda(x)\big) \d x \right|
\leq
\frac{|I|}{2\pi}\cdot \sup_I|\phi''_\lambda(x)|\cdot\frac1{(\|\lambda\|c)^2}+\frac1{\pi\|\lambda\|c}
\ll\|\lambda\|^{-1}
\end{equation}
where the implicit constant may depend on $f$ and $I$ but not on $\lambda$.
Since \eqref{eq_wrightestimate} holds trivially when $\|\lambda\|\leq 1$ by taking $C>|I|$, we may assume that $\|\lambda\|>1$.
In this case the value of $\|\lambda\|^{-1/d}$ increases with $d$ and so we can deduce from \eqref{eq_whend>1} and \eqref{eq_whend=1} that
\begin{equation}\label{eq_foranyd}
    \left|\int_{I_y} e\big(\phi_\lambda(x)\big) \d x \right|
\ll
\|\lambda\|^{-\delta}
\end{equation}
holds for any $\lambda\in\R^k$ with $\|\lambda\|>1$ and any $y\in R$; where $\delta:=1/\max_{(y,\omega)\in{\mathcal M}}d(y,\omega)$ is independent of $\lambda$.

Putting \eqref{eq_foranyd} and \eqref{eq_partitioningtheinterval} together, and keeping in mind that the cardinality of $R$ is bounded above by $|{\mathcal M}|$, which does not depend on $\lambda$, we obtain \eqref{eq_wrightestimate}, finishing the proof.
\end{proof}

\bibliographystyle{plain}
\bibliography{refs-joel}
\end{document}